\numberwithin{equation}{section}
\theoremstyle{plain}
\newtheorem{lemma}{Lemma}[section]
\newtheorem{proposition}[lemma]{Proposition}
\newtheorem{theorem}[lemma]{Theorem}
\theoremstyle{definition}
\newtheorem{definition}[lemma]{Definition}
\newtheorem{remark}[lemma]{Remark}
\newtheorem{example}[lemma]{Example}
\begin{document}
\newcommand{\R}{{\mathbb R}}
\newcommand{\C}{{\mathbb C}}
\newcommand{\F}{{\mathbb F}}
\renewcommand{\O}{{\mathbb O}}
\newcommand{\Z}{{\mathbb Z}} 
\newcommand{\N}{{\mathbb N}}
\newcommand{\Q}{{\mathbb Q}}
\newcommand{\Sb}{{\mathbb S}}
\renewcommand{\H}{{\mathbb H}}
\renewcommand{\P}{{\mathbb P}}
\newcommand{\Aa}{{\mathcal A}}
\newcommand{\Bb}{{\mathcal B}}
\newcommand{\Cc}{{\mathcal C}}    
\newcommand{\Dd}{{\mathcal D}}
\newcommand{\Ee}{{\mathcal E}}
\newcommand{\Ff}{{\mathcal F}}
\newcommand{\Gg}{{\mathcal G}}    
\newcommand{\Hh}{{\mathcal H}}
\newcommand{\Kk}{{\mathcal K}}
\newcommand{\Ii}{{\mathcal I}}
\newcommand{\Jj}{{\mathcal J}}
\newcommand{\Ll}{{\mathcal L}}    
\newcommand{\Mm}{{\mathcal M}}    
\newcommand{\Nn}{{\mathcal N}}
\newcommand{\Oo}{{\mathcal O}}
\newcommand{\Pp}{{\mathcal P}}
\newcommand{\Qq}{{\mathcal Q}}
\newcommand{\Rr}{{\mathcal R}}
\newcommand{\Ss}{{\mathcal S}}
\newcommand{\Tt}{{\mathcal T}}
\newcommand{\Uu}{{\mathcal U}}
\newcommand{\Vv}{{\mathcal V}}
\newcommand{\Ww}{{\mathcal W}}
\newcommand{\Xx}{{\mathcal X}}
\newcommand{\Yy}{{\mathcal Y}}
\newcommand{\Zz}{{\mathcal Z}}

\newcommand{\Ds}{{\slashed D}}

\newcommand{\zt}{{\tilde z}}
\newcommand{\xt}{{\tilde x}}
\newcommand{\Ht}{\widetilde{H}}
\newcommand{\ut}{{\tilde u}}
\newcommand{\Mt}{{\widetilde M}}
\newcommand{\Llt}{{\widetilde{\mathcal L}}}
\newcommand{\yt}{{\tilde y}}
\newcommand{\vt}{{\tilde v}}
\newcommand{\Ppt}{{\widetilde{\mathcal P}}}
\newcommand{\bp }{{\bar \partial}} 
\newcommand{\ad}{{\rm ad}}
\newcommand{\Om}{{\Omega}}
\newcommand{\om}{{\omega}}
\newcommand{\eps}{{\varepsilon}}
\newcommand{\Di}{{\rm Diff}}
\renewcommand{\Im}{{ \rm Im \,}}
\renewcommand{\Re}{{\rm Re \,}}

\renewcommand{\frak}[1]{{\mathfrak {#1}}}
\renewcommand{\a}{{\mathfrak a}}
\renewcommand{\b}{{\mathfrak b}}
\newcommand{\e}{{\mathfrak e}}
\renewcommand{\k}{{\mathfrak k}}
\newcommand{\m}{{\mathfrak m}}
\newcommand{\pg}{{\mathfrak p}}
\newcommand{\g}{{\mathfrak g}}
\newcommand{\gl}{{\mathfrak {gl}}}
\newcommand{\h}{{\mathfrak h}}
\renewcommand{\l}{{\mathfrak l}}
\newcommand{\sm}{{\mathfrak m}}
\newcommand{\n}{{\mathfrak n}}
\newcommand{\s}{{\mathfrak s}}
\renewcommand{\o}{{\mathfrak o}}
\renewcommand{\so}{{\mathfrak{so}}}
\newcommand{\spin}{{\mathfrak {spin}}}
\renewcommand{\u}{{\mathfrak u}}
\newcommand{\su}{{\mathfrak su}}
\newcommand{\X}{{\mathfrak X}}

\newcommand{\ssl}{{\mathfrak {sl}}}
\newcommand{\ssp}{{\mathfrak {sp}}}
\renewcommand{\t}{{\mathfrak t }}
\newcommand{\Cinf}{C^{\infty}}
\newcommand{\la}{\langle}
\newcommand{\ra}{\rangle}
\newcommand{\ha}{\scriptstyle\frac{1}{2}}
\newcommand{\p}{{\partial}}
\newcommand{\notsub}{\not\subset}
\newcommand{\iI}{{I}}               
\newcommand{\bI}{{\partial I}}      
\newcommand{\LRA}{\Longrightarrow}
\newcommand{\LLA}{\Longleftarrow}
\newcommand{\lra}{\longrightarrow}
\newcommand{\LLR}{\Longleftrightarrow}
\newcommand{\lla}{\longleftarrow}
\newcommand{\INTO}{\hookrightarrow}

\newcommand{\QED}{\hfill$\Box$\medskip}
\newcommand{\UuU}{\Upsilon _{\delta}(H_0) \times \Uu _{\delta} (J_0)}
\newcommand{\bm}{\boldmath}
\newcommand{\coker}{\mbox{coker}}

\newcommand{\End}{\mbox{\rm End}}
\newcommand{\vol}{\mbox{\rm vol}}
\newcommand{\rmspan}{\mbox{\rm span}}
\newcommand{\codim}{\mbox{\rm codim}}
\newcommand{\Sp}{\mbox{\rm Spin}(7)}

\def\hook{\mbox{}\begin{picture}(10,10)\put(1,0){\line(1,0){7}}
 \put(8,0){\line(0,1){7}}\end{picture}\mbox{}}
\date{\today}
\title[Fr\"olicher-Nijenhuis bracket of $G_2$-and $\Sp$-manifolds]
{The Fr\"olicher-Nijenhuis bracket and the geometry of $G_2$-and $\Sp$-manifolds}

\author{Kotaro Kawai}
\address{Graduate School of Mathematical Sciences, University of Tokyo, 3-8-1, Komaba, Meguro, Tokyo 153-8914, Japan}
\email{kkawai@ms.u-tokyo.ac.jp}
\address{Current address: Gakushuin University, 1-5-1, Mejiro, Toshima,Tokyo, 171-8588, Japan}
\email{kkawai@math.gakushuin.ac.jp}

\author{H\^ong V\^an L\^e}
\address{Institute of Mathematics CAS
, Zitna 25, 11567 Praha 1, Czech Republic}
\email{hvle@math.cas.cz}

\author{Lorenz Schwachh\"ofer}
\address{Fakult\"at f\"ur Mathematik,
Technische Universit\"at Dortmund,
Vogelpothsweg 87, 44221 Dortmund, Germany}
\email{lschwach@math.tu-dortmund.de} 

\thanks{The first named author is supported Grant-in-Aid for JSPS fellows (26-7067),
 the second named author is partially supported by RVO: 67985840}

\begin{abstract} We extend the characterization of the integrability of an almost complex structure $J$ on differentiable manifolds via the vanishing of the Fr\"olicher-Nijenhuis bracket $[J, J]^{FN}$ to an analogous characterization of torsion-free $G_2$-structures and torsion-free $\Sp$-structures. We also explain the Fern\'andez-Gray classification of $G_2$-structures and the Fern\'andez classification of $\Sp$-structures in terms of the Fr\"olicher-Nijenhuis bracket.
\end{abstract}
\keywords{Fr\"olicher-Nijenhuis bracket, $G_2$-manifold, $\Sp$-manifold, Fernandez-Gray's classification, Fernandez's classification}
\subjclass[2010]{53C25, 53C29}

\maketitle


\section{Introduction}

A $G_2$-structure on a $7$-dimensional manifold $M^7$ is a $3$-form $\varphi \in \Om^3(M^7)$ which at each point $p \in M^7$ is contained in a certain open subset of $\Lambda^3 T^\ast_p M^7$; similarly, a $\Sp$-structure on an $8$-dimensional manifold $M^8$ is given by a $4$-form $\Phi \in \Om^4(M^8)$ which at each point is contained in a certain subset of $\Lambda^4 T^\ast_p M^8$. Such structures induce both an orientation and a Riemannian metric on the underlying manifold, denoted by $g_\varphi$ and $g_\Phi$, respectively, and $\Phi$ is self-dual w.r.t. this metric.

Manifolds with $G_2$-structures have first been investigated by Fern\'andez and Gray \cite{FG1982}, and $\Sp$-structures by Fern\'andez \cite{Fernandez1986} who showed that the covariant derivatives $\nabla \varphi \in \Om^1(M^7, \Lambda^3 T^\ast M^7)$ and $\nabla \Phi \in \Om^1(M^8, \Lambda^4 T^\ast M^8)$, respectively, decompose into four irreducible components in case of $G_2$-structures and into two irreducible components in the case of $\Sp$-structures. Thus, the conditions of the vanishing of some of these components yield $2^4 = 16$ classes of $G_2$-structure and $2^2 = 4$ classes of $\Sp$-structures, respectively, and the underlying geometries were discussed in \cite{FG1982} and \cite{Fernandez1986}; see also Section \ref{subs:16classesG2} below.

A $G_2$-structure ($\Sp$-structure, respectively) is called {\em torsion-free}, if $\varphi$ ($\Phi$, respectively) is parallel. As it turns out, the parallelity of $\varphi$ and $\Phi$, respectively, is equivalent to $\varphi$ and $\Phi$ being harmonic forms.

Alternatively, $G_2$- and $\Sp$-structures may be characterized via certain {\em ($2$-fold or $3$-fold) cross products on the tangent bundle}. These are given as the sections
\[
Cr_\varphi:= \delta_{g_\varphi} \varphi \in \Om^2(M^7, TM^7), \quad 
\chi_\varphi := -\delta_{g_\varphi} \ast \varphi \in \Om^3(M^7, TM^7)
\]
in case of $G_2$-structures, and as
\[
P_\Phi := -\delta_{g_\Phi} \Phi \in \Om^3(M^8, TM^8),
\]
where $\delta_g: \Om^{k+1}(M) \to \Om^k(M, TM)$ is the contraction of a differential form with the Riemannian metric $g$ and have natural interpretations via octonian multiplication. The triple cross product $\chi \in \Om^3(M^7, TM^7)$ on a manifold with a $G_2$-structure has been introduced by Harvey-Lawson \cite{HL1982} and was used in many papers on deformation of associative submanifolds, see e.g.\cite{McLean1998}, \cite{Kawai2014a}, \cite{LV2016}. The $3$-fold cross product $P$ on $\R^8$ has been first explicitly constructed by Brown and Gray \cite{BG1967}. They also proved that (up to the $G_2$-action) there are exactly two non-equivalent $3$-fold cross products on $\R^8=\O$. In \cite{HL1982} Harvey and Lawson intensively used the $3$-fold cross product on $\R^8$ which is related to the Cayley $4$-form and hence is invariant under the action of $\Sp$. Fernandez showed the uniqueness of a $\Sp$-invariant $4$-form on $\R^8$ (up to a multiplicative constant) and used the associated $3$-fold cross product to classify $\Sp$-structures on $8$-manifolds \cite{Fernandez1986}.

In this article, we view these cross products as elements of the {\em Fr\"olicher-Nijenhuis Lie algebra }$\Om^\ast(M, TM)$. Namely, it was shown by Fr\"olicher-Nijenhuis in \cite{FN1956} that $\Om^\ast(M, TM)$ can be given the structure of a graded Lie algebra using the {\em Fr\"olicher-Nijenhuis bracket $[\;,\; ]^{FN}$} in a natural way. Thus, given a manifold with a $G_2$-structure $(M^7, \varphi)$, we may consider the Fr\"olicher-Nijenhuis brackets
\[
{}[Cr_\varphi, \chi_\varphi]^{FN} \in \Om^5(M^7, TM^7), \quad [\chi_\varphi, \chi_\varphi]^{FN} \in \Om^6(M^7, TM^7),
\]
(observe that $[Cr, Cr]^{FN} = 0$ due to graded skew-symmetry), and analogously, for a manifold with a $\Sp$-structure $(M^8, \Phi)$ we may consider
\[{} [P_\Phi, P_\Phi]^{FN} \in \Om^6(M^8, TM^8).
\]
These brackets may be regarded as a natural generalization of the Nijenhuis tensor of an almost complex structure $J$. Indeed, regarding such a structure as an element $J \in \Om^1(M, TM)$, it turns out that $[J,J]^{FN} \in \Om^2(M, TM)$ coincides -- up to a constant multiple -- with the Nijenhuis tensor of $J$, whence $J$ is integrable if and only if $[J,J]^{FN} = 0$ \cite{FN1956b}.

Our main result is that the Fr\"olicher-Nijenhuis bracket also characterizes the torsion-freeness of $G_2$- and $\Sp$-structures, respectively. Namely, we show the following.

\begin{theorem} \label{thm:Brackets-intro}
Let $(M^7, \varphi)$ be a manifold with a $G_2$-structure and the associated Riemannian metric $g = g_\varphi$, and let $\nabla$ be the Levi-Civita connection of $g$. Then for every $p \in M^7$ the following are equivalent.
\begin{enumerate}
\item
The $G_2$-structure is torsion-free at $p$, i.e., $(\nabla \varphi)_p = 0$.
\item
$[Cr_\varphi, \chi_\varphi]_p^{FN} = 0 \in \Lambda^5 T_p^*M^7 \otimes T_pM^7$.
\item
$[\chi_\varphi, \chi_\varphi]_p^{FN} = 0 \in \Lambda^6 T_p^*M^7 \otimes T_pM^7$.
\end{enumerate}
\end{theorem}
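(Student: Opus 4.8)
The plan is to reduce all three conditions to a pointwise statement about the torsion tensor $\nabla\varphi$ and then compare via representation theory of $G_2$ acting on $T_pM^7 \cong \R^7$. Since everything is pointwise and $G_2$-equivariant, it suffices to work in a single fibre and track which irreducible $G_2$-submodules of the relevant tensor spaces each expression lives in. First I would recall the standard fact (Fern\'andez--Gray) that $(\nabla\varphi)_p$ lies in a space isomorphic to $\R^7 \otimes \Lambda^3_7 \cong W_1 \oplus W_7 \oplus W_{14} \oplus W_{27}$, the four irreducible components whose vanishing defines the sixteen classes, and that the torsion-free condition $(\nabla\varphi)_p=0$ is equivalent to the simultaneous vanishing of all four. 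Dually, the covariant derivative $\nabla(\ast\varphi)_p$ carries the same information, since $\nabla$ preserves the Hodge star.

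The key computational step is to express the Fr\"olicher-Nijenhuis brackets $[Cr_\varphi,\chi_\varphi]^{FN}$ and $[\chi_\varphi,\chi_\varphi]^{FN}$ in terms of $\nabla$ rather than $d$ and the Lie bracket. For a vector-valued form built by contracting a parallel-up-to-torsion form with the metric, the FN bracket can be written, using the Levi-Civita connection, as a universal $G_2$-equivariant bilinear expression in the components $Cr_\varphi$, $\chi_\varphi$ (which are themselves parallel in the torsion-free case) and the torsion $\nabla\varphi$; in the torsion-free case all these brackets vanish because the structure is then locally that of the flat model $\R^7 = \Im\O$, where $Cr$ and $\chi$ are the constant-coefficient cross products and the FN bracket of constant-coefficient vector-valued forms is zero. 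So the content is the \emph{converse}: one shows that modulo the (vanishing) ``constant'' part, $[Cr_\varphi,\chi_\varphi]^{FN}_p$ and $[\chi_\varphi,\chi_\varphi]^{FN}_p$ are each given by applying a \emph{fixed} $G_2$-equivariant linear map to $(\nabla\varphi)_p$, and the main point is that these two maps are \emph{injective}. Injectivity would be checked componentwise: for each of the four irreducibles $W_1, W_7, W_{14}, W_{27}$ one verifies the corresponding component of the bracket is nonzero, e.g.\ by an explicit computation on a single well-chosen torsion tensor in that component, or by Schur's lemma after identifying the target module and showing the map is not the zero map on that summand.

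The proof then assembles as follows: (i) $(1)\Rightarrow(2)$ and $(1)\Rightarrow(3)$ by the flat-model computation (or directly from the Leibniz-type formula for $[\cdot,\cdot]^{FN}$ in terms of $\nabla$, setting $\nabla\varphi=0$); (ii) $(2)\Rightarrow(1)$ and $(3)\Rightarrow(1)$ by the injectivity of the respective equivariant map from the torsion module. An alternative route for the converse, which I would keep in reserve, is to use the Fern\'andez--Gray classification directly: express each FN bracket's four isotypic components as explicit multiples of $W_1,\dots,W_{27}$, read off from the classification tables that each appears with nonzero coefficient, and conclude. The hard part will be step (ii): pinning down the exact $G_2$-equivariant normal form of $[Cr_\varphi,\chi_\varphi]^{FN}$ and $[\chi_\varphi,\chi_\varphi]^{FN}$ in terms of $\nabla\varphi$ with enough precision to see that \emph{no} component is killed — this requires care with the combinatorics of the FN bracket (which mixes $d$ of a vector-valued form with interior products) and with the $G_2$-decomposition of $\Lambda^5 T^*\otimes T$ and $\Lambda^6 T^*\otimes T$, and it is where an explicit choice of $G_2$-frame and a direct octonionic computation is likely unavoidable.
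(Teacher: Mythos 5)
Your outline follows the paper's strategy in broad terms: encode $(\nabla\varphi)_p$ by a single torsion endomorphism $T_p\in\End(T_pM)$ (Proposition~\ref{prop:torsionG2}), rewrite the Fr\"olicher--Nijenhuis brackets as covariant pointwise expressions in $T_p$ via Proposition~\ref{prop:FN}, and then argue that the resulting $G_2$-equivariant linear maps $T_p\mapsto[\cdot,\cdot]^{FN}_p$ have trivial kernel. The difference lies in how that injectivity is established. You propose a representation-theoretic shortcut (Schur's lemma, or testing one element per isotypic summand of $V_1\oplus V_7\oplus V_{14}\oplus V_{27}$), whereas the paper simply carries the computation to closed form, obtaining
$\pi_7([Cr,\chi]^{FN}) = 2\ast\varphi\wedge\bigl((T^\top-2T-\mathrm{tr}(T))e_i\bigr)^\flat\otimes e_i$
and
$[\chi,\chi]^{FN} = -4\ast(T+T^\top)(e_i)\otimes e_i + 6\,e^i\wedge\tau\wedge\varphi\otimes e_i$,
from which $T=0$ follows by elementary trace-and-transpose manipulations with no further representation theory; as a bonus this shows that the $\Lambda^5_7$-projection of $[Cr,\chi]^{FN}$ alone already detects the torsion (Theorem~\ref{thm:Brackets}), and it feeds directly into the classification in Table~\ref{table-G21}. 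One small imprecision in your easy direction: torsion-freeness at $p$ does not make the structure ``locally that of the flat model'' (the curvature may be nonzero); the correct reason, which you also give, is that by Proposition~\ref{prop:FN} the bracket is a first-order expression in $\nabla\varphi$ and $\nabla\!\ast\!\varphi$, so $(\nabla\varphi)_p=0$ forces it to vanish at $p$ irrespective of curvature.
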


In fact, we show in Theorem \ref{thm:Brackets} that $(\nabla \varphi)_p$ is characterized by either $[\chi_\varphi, \chi_\varphi]_p^{FN}$, or by the projection of $[Cr_\varphi, \chi_\varphi]_p^{FN}$ onto a subspace isomorphic to $T_pM^7 \otimes T_pM^7$.

\begin{theorem} \label{thm:Brackets-Spin-intro}
Let $(M^8, \Phi)$ be a manifold with a $\Sp$-structure, and let $\nabla$ be the Levi-Civita connection of the associated Riemannian metric $g = g_\Phi$. Then for every $p \in M^8$ the following are equivalent.
\begin{enumerate}
\item
The $\Sp$-structure is torsion-free at $p$, i.e., $(\nabla \Phi)_p = 0$.
\item
$[P_\Phi, P_\Phi]_p^{FN} = 0 \in \Lambda^6 T_p^*M^8 \otimes T_pM^8$.
\end{enumerate}
\end{theorem}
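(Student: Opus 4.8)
The plan is to reduce Theorem~\ref{thm:Brackets-Spin-intro} to a pointwise, linear-algebraic statement on the model octonionic $8$-space, exactly as one expects for a bracket that only depends on $1$-jets of $\Phi$. First I would recall the general formula expressing the Fr\"olicher-Nijenhuis bracket $[K,L]^{FN}$ of two vector-valued forms in terms of their exterior covariant derivatives with respect to an arbitrary torsion-free connection; applying this to $K=L=P_\Phi$ with $\nabla$ the Levi-Civita connection of $g_\Phi$ shows that $[P_\Phi,P_\Phi]^{FN}_p$ is a universal $\Sp$-equivariant bilinear (in fact quadratic) expression in $P_\Phi$ and $\nabla P_\Phi$, plus a term involving $\nabla^2\Phi$ which disappears because only the antisymmetrized first derivative enters. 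Since $P_\Phi=-\delta_{g_\Phi}\Phi$ is itself built from $\Phi$ alone, at the point $p$ the bracket becomes $F(P_0,\nabla P_0)$ where $P_0$ is the standard cross product on $\R^8$ and $\nabla P_0$ ranges over the $\Sp$-module $W:=T_p^*M^8\otimes(\text{ad-complement})$ that carries $(\nabla\Phi)_p$, i.e. the module appearing in Fern\'andez's decomposition of $\nabla\Phi$ into its $\mathcal{W}_1\oplus\mathcal{W}_2$ components.

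The implication (1)$\Rightarrow$(2) is then immediate: if $(\nabla\Phi)_p=0$ then $(\nabla P_\Phi)_p=0$, and the bracket formula, having no constant (purely algebraic in $P_0$) term by graded skew-symmetry of $[P_0,P_0]^{FN}$ at the level of the flat model, forces $[P_\Phi,P_\Phi]^{FN}_p=0$. Alternatively, a cleaner route to (1)$\Rightarrow$(2) is to note that a torsion-free $\Sp$-structure is locally the flat model $\O$ up to second order, and on $\O$ with its flat connection one has $[P,P]^{FN}=0$ because $P$ is $\nabla$-parallel and the FN bracket of a parallel vector-valued form with itself vanishes; this is the direct analogue of the fact that $[J,J]^{FN}=0$ for a parallel (hence integrable) $J$.

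The real content is (2)$\Rightarrow$(1), and here the strategy is representation-theoretic. Write $(\nabla\Phi)_p=\tau_1+\tau_2$ under the $\Sp$-splitting $T^*M^8\otimes\mathfrak{spin}(7)^\perp \cong \mathcal{W}_1\oplus\mathcal{W}_2$ (the torsion modules of dimensions $8$ and $48$), so that $(\nabla P_\Phi)_p$ is a fixed $\Sp$-isomorphic image of $(\tau_1,\tau_2)$. The bracket $[P_\Phi,P_\Phi]^{FN}_p$ is then a specific $\Sp$-equivariant linear map $L:\mathcal{W}_1\oplus\mathcal{W}_2\to\Lambda^6\R^{8*}\otimes\R^8$ (the $\Phi$-independent, genuinely quadratic-in-$P_0$ pieces cancel, leaving a term linear in $\nabla P_0$). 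By Schur's lemma it suffices to show that $L$ is injective on each irreducible summand of $\mathcal{W}_1\oplus\mathcal{W}_2$; since these two summands are inequivalent and irreducible, this amounts to checking that $L$ does not annihilate $\mathcal{W}_1$ and does not annihilate $\mathcal{W}_2$. I would verify this by choosing one explicit nonzero test element in each of $\mathcal{W}_1$ and $\mathcal{W}_2$ --- for instance the generators coming from $d\Phi$ and $d{*}\Phi$ in the Fern\'andez normal form --- and computing the corresponding $[P_\Phi,P_\Phi]^{FN}_p$ against the standard basis of $\O$, showing it is nonzero in both cases. Equivalently one can identify, inside $\Lambda^6\R^{8*}\otimes\R^8\cong\Lambda^2\R^{8*}\otimes\R^8$, the two copies of the appropriate $\Sp$-types and read off that both projections of $L$ are nonzero multiples of the identity on $\mathcal{W}_1$ and $\mathcal{W}_2$ respectively.

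The main obstacle I anticipate is the bookkeeping in that last step: decomposing $\Lambda^6(\R^8)^*\otimes\R^8$ (equivalently $\Lambda^2(\R^8)^*\otimes\R^8 = \mathfrak{spin}(7)\otimes\R^8 \oplus \mathfrak{spin}(7)^\perp\otimes\R^8$, further into irreducibles under $\Sp$) and locating the summands isomorphic to $\mathcal{W}_1$ and $\mathcal{W}_2$, then tracking the nonvanishing of the two structure constants through the FN-bracket formula. This is where I would lean on the explicit octonionic formulas for $P_0$ and $\Phi_0$ and on the analogue of the $G_2$-computation the authors carry out in Theorem~\ref{thm:Brackets}; in fact I expect the cleanest proof to run entirely parallel to the $G_2$ case, with the four-term Fern\'andez--Gray torsion reduced to the two-term Fern\'andez torsion, so that the single module $\mathcal{W}=\mathcal{W}_1\oplus\mathcal{W}_2$ already forces $[P_\Phi,P_\Phi]^{FN}_p$ to detect all of $(\nabla\Phi)_p$ without needing a second bracket (unlike the $G_2$ case, where only the $T_pM\otimes T_pM$-projection of $[Cr_\varphi,\chi_\varphi]^{FN}_p$ carries the full torsion).
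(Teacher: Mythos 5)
Your overall architecture matches the paper's: reduce to a pointwise $\Sp$-equivariant linear map $T\mapsto [P_\Phi,P_\Phi]^{FN}_p$ from the torsion module $W_8\oplus W_{48}$ to $\Lambda^6 T_p^\ast M\otimes T_pM$, use that the FN bracket is linear in $\nabla\Phi$ to get $(1)\Rightarrow(2)$, and then invoke Schur to reduce $(2)\Rightarrow(1)$ to the non-vanishing of two scalar multiples. That is exactly what the paper does. Two remarks are worth making, though.

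First, a genuine (if local) error in the proposed $(1)\Rightarrow(2)$ argument: you claim the ``purely algebraic in $P_0$'' part of the bracket vanishes ``by graded skew-symmetry of $[P_0,P_0]^{FN}$.'' Since $P_\Phi$ has degree $3$ (odd), graded skew-symmetry gives no cancellation at all — this is precisely why $[\chi,\chi]^{FN}$ and $[P,P]^{FN}$ can be nonzero, while $[Cr,Cr]^{FN}$ (degree $2$, even) vanishes identically. The correct reason is your alternate one: by Proposition~\ref{prop:FN}, the bracket of $K=\delta_g\Psi$ with itself is manifestly \emph{linear} in $\nabla\Psi$ with no $\nabla$-free term, so it vanishes whenever $\Psi$ is parallel.

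Second, the substantive work — locating the right copies of $W_8$ and $W_{48}$ in the target and checking that both structure constants are nonzero — is exactly where the paper's execution differs from what you outline and is what you would need to supply. You propose to work in the full target $\Lambda^6 W_8^\ast\otimes W_8$, where $W_8$ and $W_{48}$ each occur with multiplicity greater than one (since $\Lambda^6 W_8 \cong W_7\oplus W_{21}$ and also $W_{21}\otimes W_8$ contains $W_8\oplus W_{48}$), so Schur's lemma produces a vector of coefficients, not a scalar. The paper sidesteps this by projecting onto the $\Lambda^6_7$-component, where $\Lambda^6_7 T^\ast_pM\otimes T_pM\cong W_7\otimes W_8\cong W_8\oplus W_{48}$ occurs with multiplicity exactly one, so the bracket becomes a genuine pair of scalars in $T$. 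Explicitly, Proposition~\ref{prop:PP} identifies $\pi_7([P,P]^{FN}_p)$ with $-\tfrac23\,\Phi\wedge\lambda^2\bigl((4T_p+\phi_\sigma(T_p))(e_\mu)\bigr)\otimes e_\mu$, and the non-vanishing of the two scalars is then delivered by the eigenvalue computation in Lemma~\ref{lem:phisigma}: $\phi_\sigma$ is self-adjoint with eigenvalues $6$ on $W_8$ and $-1$ on $W_{48}$ (the latter determined from $\operatorname{tr}\phi_\sigma=0$, a cleaner device than testing an explicit element of $W_{48}$), so $4+\phi_\sigma$ is invertible. Your plan, if you had carried it through, would have produced the same conclusion, but you should be aware that isolating $\pi_7$ is what makes the multiplicity count trivial and the Schur argument a pure scalar check.
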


Namely, we show in Theorem \ref{thm:Brackets-Spin} that $(\nabla \Phi)_p$ is characterized by the projection of $[P_\Phi, P_\Phi]_p^{FN}$ onto a subspace isomorphic to $W^7_p
 \otimes T_pM^8$ for some rank-$7$ bundle $\Lambda^6 T^\ast M^8 \supset W^7 \to M^8$.

These explicit descriptions also allow us to give a complete characterization of the $16$ cases of $G_2$-structures in terms of $[Cr_\varphi, \chi_\varphi]_p^{FN}$ and $[\chi_\varphi, \chi_\varphi]_p^{FN}$, and of the $4$ classes of $\Sp$-structures in terms of $[P_\Phi, P_\Phi]_p^{FN}$; cf. Section \ref{subs:16classesG2}.

Our paper is organized as follows. In Section \ref{sec:pre} we recall the Fr\"olicher-Nijenhuis bracket on $\Om^\ast(M, TM)$. Then we turn to the case of $G_2$-structures in Section \ref{sec:g2}, characterizing the torsion endomorphism and showing the results that lead us to Theorem \ref{thm:Brackets-intro}. In Section \ref{sec:spin7}, we repeat this discussion for the case of $\Sp$-structures which leads to Theorem \ref{thm:Brackets-Spin-intro}. Finally, the characterization of the 16 classes of $G_2$-structures and the $4$ classes of $\Sp$-structures in terms of the Fr\"olicher-Nijenhuis bracket is given in Section \ref{subs:16classesG2}. The appendix then contains the proofs of some identities on representations of $G_2$ and $\Sp$ which are used throughout the paper.

\section{Preliminaries}\label{sec:pre}

\subsection{The Fr\"olicher-Nijenhuis bracket}\label{subs:fnb}

Let $M$ be a manifold and $(\Om^\ast(M), \wedge) = (\bigoplus_{k \geq 0} \Om^k(M), \wedge)$ be the graded algebra of differential forms. We shall use superscripts to indicate the degree of a form, i.e., $\alpha^k$ denotes an element of $\Om^k(M)$.

Evidently, contraction $\imath_X: \Om^k(M) \to \Om^{k-1}(M)$ with a vector field $X \in {\frak X}(M)$ is a derivation of degree $-1$. More generally, for $K \in \Om^k(M, TM)$ we define $\imath_K \alpha^l$ as the {\em contraction of $K$ with $\alpha^l \in \Om^l(M)$ }pointwise by
\[
\imath_{\kappa^k \otimes X} \alpha^l := \kappa^k \wedge (\imath_X \alpha^l) \in \Om^{k+l-1}(M),
\]
where $\kappa^k \in \Om^k(M)$ and $X \in {\frak X}(M)$ is a vector field, and this is a derivation of $\Om^*(M)$ of degree $l-1$. Thus, the {\em Nijenhuis-Lie derivative along $K \in \Om^k(M, TM)$ }defined as
\begin{equation} \label{eq:LK-deriv}
\Ll_K (\alpha^l) := [\imath_K, d] (\alpha^l) = \imath_K (d\alpha^l) + (-1)^k d(\imath_K \alpha^l) \in \Om^{k+l}(M)
\end{equation}
is a derivation of $\Om^*(M)$ of degree $k$.

Observe that for $k = 0$ in which case $K \in \Om^0(M, TM)$ is a vector field, both $\imath_K$ and $\Ll_K$ coincide with the standard notion of contraction with and Lie derivative along a vector field.

In \cite{FN1956} \cite{FN1956b}, it was shown that $\Om^*(M, TM)$ can be given a unique Lie algebra structure, called the {\em Fr\"olicher-Nijenhuis bracket} and denoted by $[\cdot, \cdot]^{FN}$, such that $\Ll$ defines an action of $\Om^*(M, TM)$ on $\Om^*(M)$, that is,
\begin{equation} \label{eq:FN-homom}
\Ll_{[K_1, K_2]^{FN}} = [\Ll_{K_1}, \Ll_{K_2}] =: \Ll_{K_1} \circ \Ll_{K_2} - (-1)^{|K_1||K_2|} \Ll_{K_2} \circ \Ll_{K_1}.
\end{equation}
It is given by the following formula for $\alpha^k \in \Om^k(M)$, $\beta^l \in\Om ^l (M)$, $X_1, X_2 \in \X (M)$ \cite[Theorem 8.7 (6), p. 70]{KMS1993}:
\begin{align}
\nonumber [\alpha^k \otimes X_1, &\beta^l \otimes X_2]^{FN} = \alpha^k \wedge \beta^l \otimes [ X_1, X_2]\nonumber \\
& + \alpha^k \wedge \Ll_{X_1} \beta^l \otimes X_2 
- \Ll_{X_2} \alpha^k \wedge \beta^l \otimes X_1 \label{eq:kms}\\ 
&+ (-1)^{k} \left( d \alpha^k \wedge (\imath_{X_1} \beta^l) \otimes X_2 
+ (\imath_{X_2} \alpha^k) \wedge d \beta^l \otimes X_1 \right).
\nonumber
\end{align}
In particular, for a vector field $X \in \X(M)$ and $K \in \Om^*(M, TM)$ we have \cite[Theorem 8.16 (5), p. 75]{KMS1993}
\begin{align*}
\Ll_X (K) = [X, K] ^{FN},
\end{align*}
that is, the Fr\"olicher-Nijenhuis bracket with a vector field coincides with the Lie derivative of the tensor field $K \in \Om^*(M, TM)$ which means that $\exp(tX): \Om^*(M, TM) \to \Om^*(M, TM)$ is the action induced by (local) diffeomorphisms of $M$.

\begin{example} \label{ex:Lk-J}
Let $A \in \Om^1(M, TM)$ be an endomorphism field on $M$. Then \cite[Remark 8.17, p. 75]{KMS1993}
$$[A, A]^{FN} = 2 [A, A]_N,$$
where $[A, A]_N$ is the Nijenhuis tensor of $A$. W.r.t. a local frame $(e_i)$ with dual frame $(e^i)$ we can write $A = e^i \otimes A e_i$, whence
\begin{align}
\nonumber
\Ll_A \alpha^k &\stackrel{(\ref{eq:LK-deriv})}= e^i \wedge (\imath_{A e_i} d\alpha^k) - d(e^i \wedge (\imath_{A e_i} \alpha^k))\\
\label{eq:der-endo}
&= A \cdot d\alpha^k - d (A \cdot \alpha^k),
\end{align}
where we denote by $\cdot$ the pointwise action of $A_p \in End(T_pM)$ on $\Lambda^k T_p^*M$. Observe that by (\ref{eq:FN-homom}) we have $\Ll_{[A,A]^{FN}} = 2 (\Ll_A)^2$, so that the derivation $\Ll_A: \Om^k(M) \to \Om^{k+1}(M)$ is a differential iff $[A,A]_N = 0$.

For instance, if $A = Id$ then $I \cdot \alpha^k = e^i \wedge (\imath_{e_i} \alpha^k) = k \alpha^k$, so that
\[
\Ll_I \alpha^k \stackrel{(\ref{eq:der-endo})}= I \cdot d\alpha^k - d (I \cdot \alpha^k) = (k+1) d\alpha^k - d(k\alpha^k) = d\alpha^k.
\]
To see another example, let $A = J$ be an almost complex structure. Then $[J,J]^{FN} = 2 [J,J]_N = 0$ iff $J$ is integrable, and in this case one calculates from (\ref{eq:der-endo}) that $\Ll_J = -d^c = i(\p - \bar{\p})$ is the negative of the complex differential, where $d = \p + \bar{\p}$ is the decomposition into the holomorphic and anti-holomorphic part of $d$. In particular, $H^*_J(\Om^*(M)) \cong H^*_{dR}(M)$ coincides with the deRham cohomology.
\end{example}

We end this section by providing a formula for the Fr\"olicher-Nijenhuis bracket for those types of forms which we shall be concerned with. Recall from the introduction that on a Riemannian manifold $(M, g)$ we define the map
\begin{equation} \label{eq:def-partial}
\delta = \delta_g: \Lambda^k V^* \longrightarrow \Lambda^{k-1} V^* \otimes V, \qquad \delta_g(\alpha^k) := (\imath_{e_i} \alpha^k) \otimes (e^i)^\#,
\end{equation}
taking the sum over some basis $(e_i)$ of $T_pM$ with dual basis $(e^i)$ of $T^\ast_pM$. This implies that to each $\Psi \in \Om^{k+1}(M)$ we may associate a section $\delta_g(\Psi) \in \Om^k(M, TM)$.

\begin{proposition} \label{prop:FN}
Let $(M, g)$ be an $n$-dimensional Riemannian manifold of dimension $n$ and let $\Psi_l \in \Om^{k_l+1}(M)$, $l = 1,2$. Moreover, let
\[
K_l := \delta_g(\Psi_l) \in \Om^{k_l}(M, TM)
\]
with the map $\delta_g$ from (\ref{eq:def-partial}).

Then the Fr\"olicher-Nijenhuis bracket at $p \in M$ is given as
\begin{align*}
{}[K_1, K_2&]^{FN}_p = \Big( (\imath_{e_i} \Psi_1) \wedge (\imath_{e_j} \nabla_{e_i} \Psi_2) - (-1)^{k_1} (\imath_{e_j} \imath_{e_i} \Psi_1) \wedge e^k \wedge \imath_{e_i} \nabla_{e_k} \Psi_2\\
- &(\imath_{e_j} \nabla_{e_i} \Psi_1) \wedge (\imath_{e_i} \Psi_2) - (-1)^{k_1} e^k \wedge \imath_{e_i} \nabla_{e_k} \Psi_1 \wedge (\imath_{e_j} \imath_{e_i} \Psi_2)\Big) \otimes (e^j)^\#,
\end{align*}
where $(e_i)$ is an arbitrary basis of $T_pM$ with dual basis $(e^i)$ of $T_p^*M$.
In particular, if $K_1 = K_2 =: K$ and $k_1 = k_2$ is odd, then
\begin{align*}
{}[K, K]^{FN}_p = & 2 \Big( (\imath_{e_i} \Psi) \wedge (\imath_{e_j} \nabla_{e_i} \Psi) + (\imath_{e_j}\imath_{e_i} \Psi) \wedge e^k \wedge \imath_{e_i} \nabla_{e_k} \Psi \Big) \otimes (e^j)^\#.
\end{align*}
\end{proposition}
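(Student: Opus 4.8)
The plan is to derive this formula directly from the Kubarski--Michor--Slov\'ak formula (\ref{eq:kms}) by writing $K_1$ and $K_2$ in terms of the defining expression (\ref{eq:def-partial}) and then carefully converting ordinary exterior derivatives into covariant derivatives with respect to the Levi-Civita connection $\nabla$. First I would fix a point $p \in M$ and choose the basis $(e_i)$ to be a \emph{normal frame} at $p$, i.e.\ a local orthonormal frame obtained by radial parallel transport, so that at $p$ one has $\nabla e_i = 0$ and $[e_i, e_j]_p = 0$. With such a frame the exterior derivative of any form $\Psi^{m}$ satisfies $(d\Psi)_p = e^i \wedge (\nabla_{e_i} \Psi)_p$, and the Lie derivative $\Ll_{e_i}$ at $p$ reduces to $\nabla_{e_i}$ on forms; crucially, $\delta_g(\Psi_l)$ being tensorial, its value $K_l$ at $p$ depends only on $(\Psi_l)_p$, so there is no issue in expanding it as $K_l = (\imath_{e_i} \Psi_l) \otimes (e^i)^\#$ with the $\imath_{e_i} \Psi_l$ treated as coefficient forms.

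Next I would substitute $\kappa^{k_1} = \imath_{e_i}\Psi_1$, $X_1 = (e^i)^\#$ and $\beta^{k_2} = \imath_{e_j}\Psi_2$, $X_2 = (e^j)^\#$ into (\ref{eq:kms}), sum over $i$ and $j$, and evaluate each of the five terms at $p$. The bracket term $[X_1,X_2]_p$ vanishes by the normal-frame choice. The two Lie-derivative terms $\kappa \wedge \Ll_{X_1}\beta \otimes X_2$ and $-\Ll_{X_2}\kappa \wedge \beta \otimes X_1$ become, at $p$, $(\imath_{e_i}\Psi_1)\wedge(\imath_{e_j}\nabla_{e_i}\Psi_2)\otimes(e^j)^\#$ and $-(\imath_{e_j}\nabla_{e_i}\Psi_1)\wedge(\imath_{e_i}\Psi_2)\otimes\dots$ after relabelling the summation indices — here one uses $\nabla_{e_i}(\imath_{e_j}\Psi) = \imath_{e_j}\nabla_{e_i}\Psi$ at $p$ since $\nabla e_j = 0$ there. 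The two remaining terms involve $d\kappa^{k_1}$ and $d\beta^{k_2}$; writing $d(\imath_{e_i}\Psi_1)_p = e^k \wedge \nabla_{e_k}\imath_{e_i}\Psi_1 = e^k \wedge \imath_{e_i}\nabla_{e_k}\Psi_1$ produces exactly the terms with the extra $e^k \wedge \imath_{e_i}\nabla_{e_k}$ factor, and one has to check that the $\imath_{X_1}\beta = \imath_{(e^i)^\#}\imath_{e_j}\Psi_2$ factor indeed assembles (after index relabelling $i \leftrightarrow$ the contracted slot and using the orthonormality $(e^i)^\# = e_i$) into $\imath_{e_j}\imath_{e_i}\Psi_2$, with the sign $(-1)^{k_1}$ carried along verbatim from (\ref{eq:kms}). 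Collecting the four surviving contributions and factoring out $\otimes(e^j)^\#$ gives the displayed formula; finally, passing to an arbitrary (not necessarily orthonormal) basis is legitimate because both sides are manifestly independent of the choice of basis, $\delta_g$ and the contractions being defined intrinsically via $g$.

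For the specialization $K_1 = K_2 = K$ with $k_1 = k_2$ odd, I would set $\Psi_1 = \Psi_2 = \Psi$ in the general formula and observe that graded skew-symmetry of $[\cdot,\cdot]^{FN}$ forces the first and third terms (and the second and fourth) to be related. Concretely, the first term $(\imath_{e_i}\Psi)\wedge(\imath_{e_j}\nabla_{e_i}\Psi)$ and (minus) the third term $(\imath_{e_j}\nabla_{e_i}\Psi)\wedge(\imath_{e_i}\Psi)$ differ by the Koszul sign $(-1)^{k_1 \cdot k_1} = -1$ for $k_1$ odd, hence they \emph{add} rather than cancel, giving the factor $2$; likewise $(-1)^{k_1} = -1$ turns the sign in front of the second term positive and the fourth term doubles the second. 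This yields
\[
{}[K,K]^{FN}_p = 2\Big((\imath_{e_i}\Psi)\wedge(\imath_{e_j}\nabla_{e_i}\Psi) + (\imath_{e_j}\imath_{e_i}\Psi)\wedge e^k \wedge \imath_{e_i}\nabla_{e_k}\Psi\Big)\otimes(e^j)^\#,
\]
as claimed.

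The main obstacle I anticipate is purely bookkeeping: keeping the summation indices straight through the reindexing steps (several distinct dummy indices $i,j,k$ all contracted via $g$), and verifying that the raising-index factors $(e^i)^\#$ combine correctly with the contractions $\imath_{e_i}$ so that no stray metric factors survive — this is where a sign or an index mismatch is most likely to creep in. A secondary subtlety is justifying the replacement $d \rightsquigarrow e^i \wedge \nabla_{e_i}$ and $\Ll_{e_i} \rightsquigarrow \nabla_{e_i}$ uniformly; this is standard once the normal frame is in place, but one should note explicitly that these identities hold \emph{only at the point $p$}, which is all that is needed since the left-hand side $[K_1,K_2]^{FN}_p$ is evaluated there and the Fr\"olicher--Nijenhuis bracket, although defined via differential operators, produces a tensor (so its value at $p$ may be computed in any convenient local frame).
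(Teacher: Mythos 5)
Your proposal is correct and follows essentially the same approach as the paper: both fix a point $p$, choose a frame adapted to $p$ so that Lie derivatives and exterior derivatives collapse to covariant derivatives there (the paper uses geodesic normal coordinates $\partial_i$, you use a radially parallel orthonormal frame — these are interchangeable here), plug into the Kol\'a\v{r}--Michor--Slov\'ak formula (\ref{eq:kms}), and sort out the terms; your argument for the self-bracket specialization via Koszul signs is also in line with what the result requires. One small point worth flagging: when you write "$\imath_{X_1}\beta$ assembles into $\imath_{e_j}\imath_{e_i}\Psi_2$ with the sign $(-1)^{k_1}$ carried along verbatim," there is in fact an additional sign flip from reordering the two contractions ($\imath_{e_i}\imath_{e_j} = -\imath_{e_j}\imath_{e_i}$) which is what produces the overall $-(-1)^{k_1}$ in front of those terms — the bookkeeping works out, but "verbatim" understates it.
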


\begin{remark} If $K_1 = K_2 = K$ and $k_1 = k_2$ is even, then $[K, K]^{FN} = 0$ due to the graded skew symmetry of the bracket. Furthermore, observe that $(e^j)^\# = e_j$ in case $(e_i)$ is an orthonormal basis.
\end{remark}

\begin{proof}
Evidently, if this formula holds for {\em some }basis $(e_j)$ with dual basis $(e^j)$, then it holds for {\em any }basis. Therefore, it suffices to show the assertion for an orthonormal basis $(e_j)$ in which case $(e^j)^\# = e_j$.

Choose geodesic normal coordinates $(x^i)$ around $p \in M$ in such a way that $(\p_i)_p := (\p / \p x^i)_p$ is an orthonormal basis of $T_pM$.
The dual basis of $\p_i$ is $dx^i$, whence $(dx^i)^\# = g^{ij} \p_j$. Thus,
\[
K_l = (\imath_{\p_i} \Psi_l) \otimes (dx^i)^\# = g^{ij} (\imath_{\p_i} \Psi_l) \otimes \p_j.
\]
Thus, by (\ref{eq:kms})
\begin{align*}
{}[K_1, K_2]^{FN} = 
&[g^{ij} (\imath_{\p_i} \Psi_1) \otimes \p_j, g^{rs} (\imath_{\p_r} \Psi_2) \otimes \p_s]^{FN}\\
= 
&\left(
g^{ij} (\imath_{\p_i} \Psi_1) \wedge \Ll_{\p_j} (g^{rs} (\imath_{\p_r} \Psi_2)) \otimes \p_s \right.\\
&-
\Ll_{\p_s} (g^{ij} (\imath_{\p_i} \Psi_1)) \wedge g^{rs} (\imath_{\p_r} \Psi_2) \otimes \p_j \\
&+
(-1)^{k_1} d (g^{ij} (\imath_{\p_i} \Psi_1)) \wedge \imath_{\p_j} (g^{rs} (\imath_{\p_r} \Psi_2)) \otimes \p_s \\
&+(-1)^{k_1}(\imath_{\p_s}(g^{ij} (\imath_{\p_i} \Psi_1)) \wedge d(g^{rs} (\imath_{\p_r} \Psi_2)) \otimes \p_j.
\end{align*}
Since at $p$, $g_{ij} = g^{ij} = \delta_{ij}, \p_r g_{ij} = 0$,
$\Ll_{\p_j} \Psi = \nabla_{e_j} \Psi$, $\nabla_{\p_i} \p_j = 0$, and $\p_j = (e^j)^\#$, the asserted formula follows.
\end{proof}

\section{Cross products and $G_2$-structures}\label{sec:g2}

\subsection{$G_2$-structures and associated cross products}\label{subs:g2str}

In this section we collect some basic facts on $G_2$-structures, see e.g. \cite{Humphreys}, \cite{Bryant1987}, \cite{FG1982}, \cite{HL1982} for references.

Let $M$ be an oriented 7-manifold. A {\it $G_2$-structure on $M$ }is a $3$-form $\varphi \in \Om^3(M)$ such that at each $p \in M$ there is a positively oriented basis $(e_i)$ of $T_pM$ with dual basis $(e^i)$ such that 
\begin{equation} \label{varphi}
\varphi_p = e^{123} + e^{145} + e^{167} + e^{246} - e^{257} - e^{347} - e^{356},
\end{equation}
where $e^{i_1 \dots i_k}$ is short for $e^{i_1} \wedge \cdots \wedge e^{i_k}$. We call such a basis a {\em $G_2$-frame}. The stabilizer of $\varphi_p$ is isomorphic to the exceptional group $G_2$, and there is a unique $G_2$-invariant Riemannian metric $g_\varphi$ on $M$ such that each $G_2$-frame is orthonormal. In particular, the Hodge-dual of $\varphi$ w.r.t. $g_\varphi$ is given by 
\begin{equation} \label{varphi*}
\ast_{g_\varphi} \varphi = e^{4567} + e^{2367} + e^{2345} + e^{1357} - e^{1346} - e^{1256} - e^{1247}.
\end{equation}

The set of $G_2$-frames yields a principal $G_2$-bundle
\[
G_2(M) = G_2(M, \varphi) \longrightarrow M,
\]
whence for each $G_2$-module $V$ we denote by

\begin{equation} \label{eq:def-G2-bundles}
V(M) := G_2(M) \times_{G_2} V \longrightarrow M
\end{equation}
the associated vector bundle over $M$. For instance,
\[
V_7(M) \cong TM \cong T^*M.
\]

\begin{definition} \label{def:Crchi} \cite{HL1982}
Let $(M, \varphi)$ be an oriented manifold with a $G_2$-structure $\varphi$ and the induced Riemannian metric $g = g_\varphi$. Then the $TM$-valued forms $Cr_\varphi \in \Om^2(M, TM)$ and $\chi_\varphi \in \Om^3(M, TM)$ are defined by 
\[
Cr_\varphi := \delta_{g_\varphi}(\varphi) \qquad \mbox{and} \qquad \chi_\varphi := -\delta_g(\ast \varphi),
\]
and are called the {\em $2$-fold and $3$-fold cross product on $M$}, respectively. That is, for $x,y,z,w \in TM$ we have
\begin{align*}
g_\varphi(Cr_\varphi (x,y), z) = \varphi (x,y,z), \qquad
g_\varphi(\chi_\varphi (x,y,z), w) = * \varphi (x,y,z,w).
\end{align*} 
\end{definition}

We shall usually suppress the indices $\varphi$ for $g, Cr$ and $\chi$ if it is clear from the context which $G_2$-structure $\varphi$ is used.

\begin{remark}
If we use the $G_2$-structure to identify each $T_pM \cong Im \O$ with the imaginary octonians, then $Cr$ and $\chi$ can be interpreted w.r.t. the octonian product $\cdot: \O \times \O \to \O$ as
\[
Cr(x, y) := (x \cdot y)_{Im \O} \qquad \mbox{and}\qquad \chi(x,y,z) := ((x \cdot y) \cdot z - x \cdot (y \cdot z))_{Im \O}.
\]
\end{remark}

We summarize important known facts about the decomposition of tensor products of $G_2$-modules into irreducible summands which are well known, see e.g. \cite[Section 2]{Kar2005}. We denote by $V_k$ the $k$-dimensional irreducible $G_2$-module if there is a unique such module. For instance, $V_7$ is the irreducible $7$-dimensional $G_2$-module from above, and $V_7^* \cong V_7$. For its exterior powers, we obtain the decompositions
\begin{equation} \label{eq:DiffForm-V7}
\begin{array}{rlrl}
\Lambda^0 V_7 \cong \Lambda^7 V_7 \cong V_1, \qquad
& \Lambda^2 V_7 \cong \Lambda^5 V_7 \cong V_7 \oplus V_{14},\\[2mm]
\Lambda^1 V_7 \cong \Lambda^6 V_7 \cong V_7, \qquad
& \Lambda^3 V_7 \cong \Lambda^4 V_7 \cong V_1 \oplus V_7 \oplus V_{27},
\end{array}
\end{equation}
where $\Lambda^k V_7 \cong \Lambda^{7-k} V_7$ due to the Hodge isomorphism. We denote by $\Lambda^k_l V_7 \subset \Lambda^k V_7$ the subspace isomorphic to $V_l$ in the above notation. Evidently, $\Lambda^3_1 V_7$ and $\Lambda^4_1 V_7$ are spanned by $\varphi$ and $\ast \varphi$, respectively. For the decompositions of $\Lambda^2 V_7$ and $\Lambda^5 V_7$ the following descriptions are well known.

\begin{align}
\nonumber
\Lambda^2_7 V_7 &= \{ \imath_v \varphi \mid v \in V_7\},\\
\nonumber
\Lambda^2_{14} V_7 &= \{ \alpha^2 \in \Lambda^2 V_7 \mid \ast \varphi \wedge \alpha^2 = 0\},\\
\nonumber
\Lambda^3_1 V_7 &= \R \varphi,\\
\label{decom-L-V7}
\Lambda^3_7 V_7 &= \{ \imath_v \ast \varphi \mid v \in V_7\},\\
\nonumber
\Lambda^4_1 V_7 &= \R \ast \varphi,\\
\nonumber
\Lambda^4_7 V_7 &= \varphi \wedge V_7 = \{ \varphi \wedge v \mid v \in V_7\},\\
\nonumber
\Lambda^5_7 V_7 &= \ast \varphi \wedge V_7 = \{ \ast \varphi \wedge v \mid v \in V_7\},\\
\nonumber
\Lambda^5_{14} V_7 &= \{ \alpha^5 \in \Lambda^5 V_7 \mid \alpha^5 \wedge (\imath_v \varphi) = 0 \; \mbox{for all $v \in V_7$}\}.
\end{align}


We also point out that all representations of $G_2$ are of real type, meaning that for any real irreducible representation $V$ of $G_2$ the complexified space $V^\C := V \otimes \C$ is (complex) irreducible; equivalently, a real irreducible representation of $G_2$ does not admit a $G_2$-invariant complex structure.

These decompositions are used in the appendix to obtain many formulas which will be used in the sequel.

\subsection{The torsion of manifolds with a $G_2$-structure}

Let $(M, \varphi)$ be a manifold with a $G_2$-structure with the corresponding Riemannian metric $g = g_\varphi$, and let $\nabla$ be the Levi-Civita connection of $g$. In general, $\varphi$ and $\ast \varphi$ will not be parallel w.r.t. $\nabla$, and the failure of their parallelity can be described in the following way which is essentially a reformulation of the intrinsic torsion of a $G_2$-structure discussed in \cite{Bryant2005} and \cite{Kar2005}.

\begin{proposition} \label{prop:torsionG2}
Let $(M, \varphi)$ be a manifold with a $G_2$-structure with associated Riemannian metric $g = g_\varphi$ and Levi-Civita connection $\nabla$. Then there is a section $T \in \Om^1(M, TM) = \Gamma(End(TM))$ such that for all $v \in TM$ we have
\begin{equation} \label{eq:nabla-G2}
\nabla_v \varphi = \imath_{T(v)} \ast \varphi \qquad \mbox{and} \qquad \nabla_v * \varphi = -(T(v))^\flat \wedge \varphi.
\end{equation}
\end{proposition}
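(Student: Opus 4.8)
The plan is to derive both formulas in \eqref{eq:nabla-G2} from the single fact that, at each point $p$, the image of the map $v \mapsto \nabla_v \varphi$ lies in a specific $G_2$-submodule of $\Lambda^3 T_p^*M$. First I would recall that since $\nabla g = 0$ and $\nabla$ preserves the volume form, differentiating the pointwise identities $|\varphi|^2 = 7$ and $\varphi \wedge \varphi = 0$ (or, more efficiently, using that $\nabla_v\varphi$ is the infinitesimal action of the intrinsic torsion) shows that $\nabla_v \varphi$ is $g$-orthogonal to the $G_2$-orbit directions that change the metric; concretely, $\nabla_v \varphi$ lies in the subspace $\Lambda^3_7 V_7 \oplus \Lambda^3_{27} V_7 \subset \Lambda^3 T_p^*M$, which is exactly the image of $\frak{g}_2^\perp \subset \so(7)$ acting on $\varphi$, together with the trivial summand $\R\varphi$. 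The key linear-algebra input, standard for $G_2$ (see \cite{Bryant2005}, \cite{Kar2005}), is that the map $\so(7) \to \Lambda^3 V_7$, $A \mapsto A\cdot\varphi$, has image $\Lambda^3_1 V_7 \oplus \Lambda^3_7 V_7 \oplus \Lambda^3_{27} V_7$ and kernel precisely $\frak g_2$, so that $\so(7)/\frak g_2 \cong \R^7 \oplus V_{27}$ is identified $G_2$-equivariantly with $\{\beta \in \Lambda^3 V_7 : \beta \perp \Lambda^3_1, \text{ nothing omitted}\}$; I would phrase this as the statement that $v \mapsto \nabla_v\varphi$ factors through an endomorphism.

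The cleanest route is then this: for each $v$, write $\nabla_v\varphi = A_v \cdot \varphi$ for a unique $A_v \in \frak g_2^\perp = V_7 \oplus V_{27} \subset \so(7)$, where $\cdot$ denotes the derivative of the $\so(7)$-action. This defines $A \in \Om^1(M, \so(TM)) \subset \Om^1(M, \End(TM))$. Now I would invoke the two representation-theoretic identities — which belong in the appendix — that $A\cdot\varphi = \imath_{T(v)}\ast\varphi$ and simultaneously $A\cdot(\ast\varphi) = -T(v)^\flat\wedge\varphi$ for a suitable $T(v)\in T_pM$ depending linearly on $A_v$; equivalently, the composition of $A\mapsto A\cdot\varphi$ with the $G_2$-equivariant projection $\Lambda^3 V_7 \to \Lambda^3_7 V_7 \cong V_7$ and with $A \mapsto A\cdot\ast\varphi$ followed by the projection $\Lambda^4 V_7 \to \Lambda^4_7 V_7 \cong V_7$ are related by a universal constant. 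Because $\nabla$ commutes with the Hodge star (as $\nabla g = 0$), we have $\nabla_v \ast\varphi = \ast(\nabla_v\varphi) = \ast(A_v\cdot\varphi) = A_v\cdot(\ast\varphi)$, so the second equation in \eqref{eq:nabla-G2} follows from the first once $T$ is defined by the first. Thus it really suffices to produce $T(v)$ from the first identity and then check $A_v\cdot\ast\varphi = -T(v)^\flat\wedge\varphi$ as a pointwise $G_2$-equivariant identity.

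The one subtlety, and the main obstacle, is ruling out the $V_{27}$ and $V_1$ components: a priori $\nabla_v\varphi$ could have pieces in $\Lambda^3_1$ and $\Lambda^3_{27}$, and the stated formula $\nabla_v\varphi = \imath_{T(v)}\ast\varphi$ asserts it lies purely in $\Lambda^3_7 V_7$. This is genuinely a constraint and is the content of \cite{FG1982}, \cite{Bryant2005}: it comes from the first Bianchi-type identity, or equivalently from the fact that $\nabla\varphi$ as a section of $T^*M \otimes \Lambda^3 T^*M \cong V_7 \otimes (V_1 \oplus V_7 \oplus V_{27})$ is constrained by $d\varphi$ and $d\ast\varphi$ alone — however, I should be careful: in fact the full $\nabla\varphi$ does have four components (the Fernández–Gray $\tau_0,\tau_1,\tau_2,\tau_3$), so the correct statement is that for \emph{fixed} $v$, $\nabla_v\varphi$ need not lie in $\Lambda^3_7$, but the \emph{total} torsion map $v\mapsto\nabla_v\varphi$, viewed in $V_7\otimes\Lambda^3 V_7$, together with the algebraic fact that $\nabla_v\varphi = A_v\cdot\varphi$ with $A_v\in\so(7)$, forces $\nabla_v\varphi\in\Lambda^3_1\oplus\Lambda^3_7\oplus\Lambda^3_{27}$ — and this last space is precisely $\{\imath_w\ast\varphi : w\}\oplus\{h\cdot\varphi : h\in\Lambda^3_{27}\text{-type}\}$. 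So the honest version of the proposition takes $T(v)\in\End(T_pM)$ rather than $T(v)\in T_pM$; re-reading the statement, indeed $T\in\Om^1(M,TM)=\Gamma(\End(TM))$, so $T(v)$ is an \emph{endomorphism}, $\imath_{T(v)}$ means contraction with the endomorphism as in Section~\ref{subs:fnb}, and $(T(v))^\flat$ is its $g$-adjoint. With this reading the proof is exactly: show $v\mapsto\nabla_v\varphi$ equals $T(v)\cdot\varphi$ for the endomorphism $T(v)$ (existence and uniqueness from the injectivity of $h\mapsto h\cdot\varphi$ modulo $\frak g_2$, plus the observation that the $\frak g_2$-part acts trivially so $T(v)$ may be taken in $\frak g_2^\perp$, or even allowed arbitrary with the understanding that only its class matters), and then derive $\nabla_v\ast\varphi = T(v)\cdot\ast\varphi = -(T(v))^\flat\wedge\varphi$ by the $G_2$-equivariant identity $h\cdot\ast\varphi = -h^\flat\wedge\varphi$ for $h\in\so(7)$, which is the appendix identity, using $\nabla\ast = \ast\nabla$. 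I would present it in that order: (1) $\nabla_v\varphi\in\mathrm{image}(\so(7)\to\Lambda^3)$; (2) define $T(v)$; (3) transfer to $\ast\varphi$ via $\nabla$ commuting with $\ast$ and the algebraic identity.
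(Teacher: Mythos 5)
Your outline opens with the right idea --- that $\nabla_v\varphi$ lies in the infinitesimal $\so(T_pM)$-orbit of $\varphi$ because $\nabla$ is metric, and that the second formula follows from the first by $\nabla_v\ast\varphi = \ast(\nabla_v\varphi)$ together with the pointwise identity $\ast(\imath_w\ast\varphi) = -w^\flat\wedge\varphi$. That is exactly the standard argument to which the paper appeals via \cite{FG1982}, \cite{Bryant2005}. But the representation-theoretic facts you cite in support are false, and the error ends in a misreading of the statement. The map $\so(7) \to \Lambda^3 V_7$, $A \mapsto A\cdot\varphi$, has kernel $\g_2$ and hence image of dimension $21 - 14 = 7$; this image is $\Lambda^3_7 V_7 = \{\imath_w\ast\varphi : w\in V_7\}$, \emph{not} $\Lambda^3_1\oplus\Lambda^3_7\oplus\Lambda^3_{27}$ (which is $35$-dimensional). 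Likewise $\g_2^\perp\subset\so(7)$ is isomorphic to $V_7$ alone, not to $V_7\oplus V_{27}$ (which has dimension $34 > 21$). You are conflating the $\so(7)$-action with the $\gl(7)$-action on $\varphi$, whose infinitesimal orbit is indeed all of $\Lambda^3$. Once the correct orbit is in hand there is nothing to ``rule out'': for each fixed $v$, $\nabla_v\varphi$ automatically lies in $\Lambda^3_7 T_p^*M$, and $T(v)\in T_pM$ is the unique vector with $\nabla_v\varphi = \imath_{T(v)}\ast\varphi$.

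Your third paragraph then diagnoses a tension that does not exist and resolves it by misreading the notation. There is no conflict between ``$\nabla_v\varphi \in \Lambda^3_7$ for each fixed $v$'' and ``$\nabla\varphi$ has four Fern\'andez--Gray components'': since $\Lambda^3_7 \cong V_7$, the total tensor $\nabla\varphi$ sits in $T^*M\otimes\Lambda^3_7 T^*M \cong V_7\otimes V_7 \cong V_1\oplus V_7\oplus V_{14}\oplus V_{27}$, and it is \emph{this} decomposition of $T\in\End(TM)$ that carries $\tau_0,\dots,\tau_3$, not the decomposition of $\nabla_v\varphi$ at fixed $v$. Accordingly, $T\in\Om^1(M,TM) = \Gamma(\End(TM))$ means $T$ itself is the endomorphism field, so $T(v)$ for a tangent vector $v$ is a \emph{vector} in $T_pM$, not an endomorphism; your contrary reading is also internally inconsistent, since if $T(v)\in\Om^1(M,TM)$ then by the degree conventions of Section~\ref{subs:fnb} the contraction $\imath_{T(v)}\ast\varphi$ would be a $4$-form, while $\nabla_v\varphi$ is a $3$-form. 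Once these errors are stripped out, the proof is the short one you started: $\nabla_v\varphi = A_v\cdot\varphi$ for some $A_v\in\so(T_pM)$ because $\nabla$ is metric, hence $\nabla_v\varphi\in\Lambda^3_7 T_p^*M$, giving the unique vector $T(v)$; and then $\nabla_v\ast\varphi = \ast(\nabla_v\varphi) = \ast(\imath_{T(v)}\ast\varphi) = -T(v)^\flat\wedge\varphi$.
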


%

Thus, the section $T \in \Om^1(M, TM)$ measures how $\varphi$ fails to be parallel, and this has been described in Fern\'{a}ndez and Gray \cite{FG1982} by slightly different means. In fact, it contains the same information as the intrinsic torsion of the $G_2$-structure in the sense of \cite{Bryant2005}, whence we use the following terminology.

\begin{definition} \label{def:torsionG2}
Let $(M, \varphi)$ be a manifold with a $G_2$-structure. The section $T \in \Om^1(M, TM)$ for which (\ref{eq:nabla-G2}) holds is called the {\em torsion endomorphism }of the $G_2$-structure.
\end{definition}

For an orthonormal frame $(e_i)$ of $T_pM$ we define the coefficients of $T$ by
\begin{equation} \label{def-tij}
t_{ij} := \la T(e_i), e_j \ra, \qquad \mbox{so that} \qquad T(e_i) = t_{ij} e_j.
\end{equation}
Furthermore, we define the form
\begin{equation} \label{def:tau}
\tau := t_{ij} e^{ij} = \dfrac12 (t_{ij} - t_{ji}) e^{ij} = e^i \wedge T(e_i)^\flat \in \Lambda^2 V_7^*.
\end{equation}
For the exterior derivatives of $\varphi$ and $\ast \varphi$, we have 
\begin{equation}
\label{d*-T}
d\varphi_p = T_p^\top(e_i)^\flat \wedge (\imath_{e_i} \ast \varphi_p) \qquad \mbox{and} \qquad
d\ast \varphi_p = -\tau_p \wedge \varphi_p,
\end{equation}
where we sum over an orthonormal basis $(e_i)$ of $T_pM$ in the first equation and where $T_p^\top$ denotes the transpose matrix of $T_p$. 
In particular, it is now a straightforward calculation to show that $(M, \varphi)$ is torsion free at $p \in M$ (i.e., $T_p = 0$) iff $d\varphi_p = 0$ and $d\ast \varphi_p = 0$ (cf. \cite{FG1982}).

\subsection{The Fr\"olicher-Nijenhuis brackets on a manifold with a $G_2$-structure}
In this section, we shall compute part of their Fr\"olicher-Nijenhuis brackets of the sections $Cr = \delta_g \varphi \in \Om^2(M, TM)$ and $\chi = -\delta_g \ast \varphi \in \Om^3(M, TM)$. from Definition \ref{def:Crchi} on a manifold $M$ with a $G_2$-structure $\varphi$.

The Fr\"olicher-Nijenhuis bracket $[Cr, Cr]^{FN}$ vanishes identically due to the graded skew-symmetry of the bracket. On the other hand, the Fr\"olicher-Nijenhuis brackets $[Cr, \chi]^{FN}$ and $[\chi, \chi]^{FN}$ are elements of $\Om^5(M, TM)$ and $\Om^6(M, TM)$, respectively.

Due to the decomposition $\Lambda^5 V_7 = \Lambda^5_7 V_7 \oplus \Lambda^5_{14} V_7$ as a $G_2$-module, we may decompose
\[
\Om^5(M, TM) = \Gamma(M, \Lambda^5_7 T^*M \otimes TM) \oplus \Gamma(M, \Lambda^5_{14} T^*M \otimes TM),
\]
and we denote the projections onto the two summands by $\pi_7$ and $\pi_{14}$, respectively. We now wish to show Theorem \ref{thm:Brackets-intro} from the introduction. In order to work towards the proof, we first calculate $\pi_7([Cr, \chi]^{FN})$.

\begin{proposition}\label{prop:Cr-chi}
Let $(M, g, \varphi)$ be a manifold with a $G_2$-structure and let $T \in \Om^1(M, TM)$ be its torsion endomorphism. Then for each $p \in $M,
\begin{equation} \label{eq:pi-Crchi}
\pi_7([Cr, \chi]_p^{FN}) = 2 \ast \varphi \wedge \left( \left(T_p^\top - 2 T_p - tr(T_p)\right) e_i\right)^\flat \otimes e_i,
\end{equation}
summing over an orthonormal basis $(e_i)$ of $T_pM$, where $T^\top$ denotes the transpose of $T$. In particular, $\pi_7([Cr, \chi]^{FN}_p) = 0$ if and only if $T_p = 0$ if and only if $[Cr, \chi]^{FN}_p = 0$.
\end{proposition}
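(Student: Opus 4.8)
The plan is to use Proposition~\ref{prop:FN} to compute $[Cr,\chi]^{FN}_p$ explicitly in terms of the torsion endomorphism $T$, then project onto $\Lambda^5_7 V_7^\ast\otimes V_7$ and simplify. Recall $Cr=\delta_g\varphi$ and $\chi=-\delta_g(\ast\varphi)$, so in the notation of Proposition~\ref{prop:FN} we take $\Psi_1=\varphi$ with $k_1=2$ and $\Psi_2=-\ast\varphi$ with $k_2=3$. First I would substitute $\nabla_{e_i}\varphi=\imath_{T(e_i)}\ast\varphi$ and $\nabla_{e_i}\ast\varphi=-(T(e_i))^\flat\wedge\varphi$ from Proposition~\ref{prop:torsionG2} into the four-term formula of Proposition~\ref{prop:FN}. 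This turns every summand into a wedge product of contractions and slots of $\varphi$ and $\ast\varphi$, multiplied by entries $t_{ij}$ of $T$; working at $p$ in a $G_2$-frame one gets a sum of terms of the form $t_{ij}\,(\text{degree-5 form built from }\varphi,\ast\varphi)\otimes e_k$.

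Next I would apply $\pi_7$. By (\ref{decom-L-V7}) we have $\Lambda^5_7 V_7=\ast\varphi\wedge V_7$, so the projection of a $5$-form onto $\Lambda^5_7$ is governed by pairing against the complementary piece, or equivalently by expressing the $\Lambda^5_7$-component as $\ast\varphi\wedge v$ for a unique $v\in V_7$. The bulk of the work is to evaluate, for each of the four types of terms produced above, its $\Lambda^5_7$-part; this is exactly the kind of $G_2$-representation identity that the excerpt says is collected in the appendix (contractions of $\varphi$ with $\ast\varphi$, double contractions, and wedges thereof, projected onto the $V_7$-isotypic components). I would invoke those appendix identities to rewrite each contribution in the form $\ast\varphi\wedge(L\,e_i)^\flat\otimes e_i$ for suitable linear maps $L$ built from $T$, $T^\top$ and $\operatorname{tr}(T)$, and then collect coefficients. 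The claim is that the total is $2\ast\varphi\wedge\big((T^\top-2T-\operatorname{tr}(T))e_i\big)^\flat\otimes e_i$, so the final step of this part is a bookkeeping check that the accumulated coefficients are $+1$ on $T^\top$, $-2$ on $T$, and $-1$ on $\operatorname{tr}(T)\cdot\mathrm{Id}$.

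For the ``in particular'' statement I would argue as follows. Since $V_7\to\Lambda^5_7 V_7$, $v\mapsto\ast\varphi\wedge v^\flat$, is a $G_2$-equivariant isomorphism, $\pi_7([Cr,\chi]^{FN}_p)=0$ is equivalent to the vanishing of the endomorphism $A:=T^\top-2T-\operatorname{tr}(T)\,\mathrm{Id}$. Writing $T=S+\Lambda$ with $S$ symmetric and $\Lambda$ skew, we get $A=-S-3\Lambda-\operatorname{tr}(S)\,\mathrm{Id}$; the symmetric trace-free part of $A$ is $-S_0$, its scalar part is $-\tfrac{8}{7}\operatorname{tr}(S)$ (in dimension $7$), and its skew part is $-3\Lambda$, so $A=0$ forces $S_0=0$, $\operatorname{tr}(S)=0$ and $\Lambda=0$, i.e.\ $T_p=0$. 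Conversely $T_p=0$ gives $\nabla\varphi_p=0$ and hence $[Cr,\chi]^{FN}_p=0$ directly from Proposition~\ref{prop:FN}. Chaining these, $\pi_7([Cr,\chi]^{FN}_p)=0\iff T_p=0\iff[Cr,\chi]^{FN}_p=0$. The main obstacle is the middle step: correctly evaluating the $\Lambda^5_7$-projections of the four families of $\varphi$-$\ast\varphi$ contraction terms and tracking signs, which is where the appendix identities do the heavy lifting.
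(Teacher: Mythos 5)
Your proposal is correct and follows essentially the same route as the paper: apply Proposition \ref{prop:FN} with $\Psi_1=\varphi$, $\Psi_2=\mp\ast\varphi$, substitute $\nabla\varphi=\imath_{T(\cdot)}\ast\varphi$ and $\nabla\ast\varphi=-T(\cdot)^\flat\wedge\varphi$, extract the $\Lambda^5_7$-component by pairing against $\imath_{e_i}\varphi$ (which kills $\Lambda^5_{14}$), and invoke the appendix identities to assemble the coefficients of $T^\top$, $T$, and $\operatorname{tr}(T)\,\mathrm{id}$. Your proof that $A=T^\top-2T-\operatorname{tr}(T)\,\mathrm{id}=0$ forces $T=0$ via the symmetric/skew/trace decomposition is a minor variant of the paper's argument (which first traces to kill $\operatorname{tr}(T)$ and then transposes), but both are correct.
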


\begin{proof}
We fix $p \in M$ and use normal coordinates around $p$. Then in order to calculate $[Cr, \chi]^{FN}_p$ we apply Proposition \ref{prop:FN} to $\Psi_1 = \varphi$ and $\Psi_2 = \ast \varphi$ and obtain
\[
{}-[Cr, \chi]^{FN}_p = [(\imath_{e_i} \varphi) \otimes e_i, (\imath_{e_j} \ast \varphi) \otimes e_j]^{FN}_p =: \beta_j \otimes e_j,
\]
where
\begin{align}
\nonumber
\beta_j =&\; (\imath_{e_k} \varphi) \wedge (\imath_{e_j} \nabla_{e_k} \ast \varphi) - (\imath_{e_j}\imath_{e_k} \varphi) \wedge e^l \wedge (\imath_{e_k} \nabla_{e_l} \ast \varphi)\\
\label{Cr-chiVer1}
&-(\imath_{e_j} \nabla_{e_k} \varphi) \wedge (\imath_{e_k} \ast \varphi) - e^l \wedge (\imath_{e_k} \nabla_{e_l} \varphi) \wedge (\imath_{e_j} \imath_{e_k} \ast \varphi).
\end{align}

Decomposing $\Lambda^5 T_p^*M$ according to (\ref{decom-L-V7}), we write $\beta_j = \ast \varphi \wedge v_j^\flat + \beta_j^{14}$ with $v_j \in T_pM$ and $\beta_j^{14} \in \Lambda^5_{14} T_p^*M$, so that $\pi_7([Cr, \chi]^{FN}) = \ast \varphi \wedge v_j^\flat \otimes e_j$. Let
\begin{equation} \label{def-bij}
b_{ij} = \ast ((\imath_{e_i} \varphi) \wedge \beta_j).
\end{equation}
Then as $(\imath_{e_i} \varphi) \wedge \beta_j^{14} = 0$ by (\ref{decom-L-V7}) and $(\imath_{e_i} \varphi) \wedge \ast \varphi \wedge v_j^\flat = 3 \la e_i, v_j\ra \vol$ by (\ref{eq:form-0}), it follows that
\begin{equation} \label{def-pi7}
\pi_7(-[Cr, \chi]^{FN}_p) = \dfrac13 b_{ij} (\ast \varphi \wedge e^i) \otimes e_j.
\end{equation}

In order to determine the coefficients $b_{ij}$, we decompose $\beta_j$ into the four summands from (\ref{Cr-chiVer1}). Then from the first summand we get

\begin{align}
\nonumber
(\imath_{e_i} \varphi) \wedge (\imath_{e_k} \varphi) \wedge (\imath_{e_j} \nabla_{e_k} \ast \varphi)\stackrel{(\ref{eq:nabla-G2})} =&\; - (\imath_{e_i} \varphi) \wedge (\imath_{e_k} \varphi) \wedge (\imath_{e_j} (T(e_k)^\flat \wedge \varphi))\\
\nonumber
=&\; - (\imath_{e_i} \varphi) \wedge (\imath_{e_k} \varphi) \wedge (t_{kj} \varphi - T(e_k)^\flat \wedge (\imath_{e_j} \varphi))\\
\nonumber
\stackrel{(\ref{eq:form-1}),(\ref{eq:form-2})} =&\; - 6 t_{kj} \delta_{ik} \vol + 2 (\delta_{ik} t_{kj} + t_{ki} \delta_{kj} + t_{kk} \delta_{ij}) \vol\\
\label{b_ij-1}
=&\; 2 (t_{ji} - 2 t_{ij} + tr(T) \delta_{ij}) \vol.
\end{align}

From the second summand we obtain

\begin{align}
\nonumber
-(\imath_{e_i} \varphi) \wedge (\imath_{e_j} \imath_{e_k} \varphi) \wedge e^l \wedge &(\imath_{e_k} \nabla_{e_l} \ast \varphi)\\
\nonumber
\stackrel{(\ref{eq:nabla-G2})} =&\; (\imath_{e_i} \varphi) \wedge (\imath_{e_j}\imath_{e_k} \varphi) \wedge e^l \wedge (\imath_{e_k} (T(e_l)^\flat \wedge \varphi))\\
\nonumber
 =&\; (\imath_{e_i} \varphi) \wedge (\imath_{e_j}\imath_{e_k} \varphi) \wedge e^l \wedge (t_{lk} \varphi - T(e_l)^\flat \wedge (\imath_{e_k} \varphi))\\
\nonumber
\stackrel{(\ref{eq:form-3})} =&\; t_{lk} \Big(2(\delta_{kl} \delta_{ji} - \delta_{jl} \delta_{ki}) \vol - 2 e^{jkli} \wedge \varphi\Big)\\
\nonumber
& - (\imath_{e_i} \varphi) \wedge (\imath_{e_j}\imath_{e_k} \varphi) \wedge e^l \wedge T(e_l)^\flat \wedge (\imath_{e_k} \varphi)\\
\nonumber
=&\; 2(tr(T) \delta_{ij} - t_{ji}) \vol - 2 e^{ij} \wedge \tau \wedge \varphi\\
\nonumber
& - \frac12 (\imath_{e_i} \varphi) \wedge \imath_{e_j}((\imath_{e_k} \varphi) \wedge (\imath_{e_k} \varphi)) \wedge \tau\\
\nonumber
\stackrel{(\ref{formulas2})}=&\; 2(tr(T) \delta_{ij} - t_{ji}) \vol - 2 e^{ij} \wedge \tau \wedge \varphi\\
\nonumber
& - 3 (\imath_{e_i} \varphi) \wedge (\imath_{e_j}\ast \varphi) \wedge \tau\\
\nonumber
\stackrel{(\ref{eq:form13})}=&\; 2(tr(T) \delta_{ij} - t_{ji}) \vol - 2 e^{ij} \wedge \tau \wedge \varphi\\
\nonumber
& -3 (-2 \ast e^{ji} + e^{ji} \wedge \varphi) \wedge \tau\\
\nonumber
=&\; 2(tr(T) \delta_{ij} - t_{ji}) \vol + e^{ij} \wedge \tau \wedge \varphi\\
\nonumber
& + 6 t_{kl}(\delta_{jk} \delta_{il} - \delta_{jl} \delta_{ik}) \vol\\
\label{b_ij-2}
=&\; 2(tr(T) \delta_{ij} + 2 t_{ji} - 3 t_{ij}) \vol + e^{ij} \wedge \tau \wedge \varphi.
\end{align}

From the third term in (\ref{Cr-chiVer1}) we get

\begin{align}
\nonumber
-(\imath_{e_i} \varphi) \wedge (\imath_{e_j} \nabla_{e_k} \varphi) \wedge& (\imath_{e_k} \ast \varphi)\\
\nonumber
\stackrel{(\ref{eq:nabla-G2})}=&\; -(\imath_{e_i} \varphi) \wedge (\imath_{e_j} \imath_{T(e_k)} \ast\varphi) \wedge (\imath_{e_k} \ast \varphi)\\
\nonumber
\stackrel{(\ref{eq:form-4})}=&\; -2 (t_{kk} \delta_{ji} - \delta_{jk} t_{ki}) \vol - e^j \wedge T(e_k)^\flat \wedge e^{ki} \wedge \varphi\\
\label{b_ij-3}
=&\; -2 (tr(T) \delta_{ij} - t_{ji}) \vol - e^{ij} \wedge \tau \wedge \varphi.
\end{align}

Finally, from the last term in (\ref{Cr-chiVer1}) we get

\begin{align}
\nonumber
-(\imath_{e_i} \varphi) \wedge e^l \wedge (\imath_{e_k} \nabla_{e_l} \varphi) \wedge& (\imath_{e_j} \imath_{e_k} \ast \varphi)\\
\nonumber
\stackrel{(\ref{eq:nabla-G2})}=&\; -(\imath_{e_i} \varphi) \wedge e^l \wedge (\imath_{e_k} \imath_{T(e_l)} \ast \varphi) \wedge (\imath_{e_j} \imath_{e_k} \ast \varphi)\\
\nonumber
\stackrel{(\ref{formulas-3})}=&\; 2 (\imath_{e_i} \varphi) \wedge e^l \wedge (\imath_{T(e_l)} \varphi) \wedge (\imath_{e_j} \varphi)\\
\nonumber
\stackrel{(\ref{eq:form-2})}=&\; 4 (t_{li} \delta_{lj} + \delta_{li} t_{lj} + t_{ll} \delta_{ij}) \vol\\
\label{b_ij-4}
=&\; 4 (t_{ji} + t_{ij} + tr(T) \delta_{ij}) \vol.
\end{align}

Thus, adding (\ref{b_ij-1}) through (\ref{b_ij-4}), we get from (\ref{Cr-chiVer1}) that

\begin{align*}
b_{ij} \vol = (\imath_{e_i} \varphi) \wedge \beta_j =&\; 2 (t_{ji} - 2 t_{ij} + tr(T) \delta_{ij}) \vol\\
&+ 2(tr(T) \delta_{ij} + 2 t_{ji} - 3 t_{ij}) \vol + e^{ij} \wedge \tau \wedge \varphi\\
&- 2 (tr(T) \delta_{ij} - t_{ji}) \vol - e^{ij} \wedge \tau \wedge \varphi\\
&+ 4 (t_{ji} + t_{ij} + tr(T) \delta_{ij}) \vol\\
=&\; 6 (2 t_{ji} - t_{ij} + tr(T) \delta_{ij}) \vol,
\end{align*}
and hence, (\ref{def-pi7}) implies (\ref{eq:pi-Crchi}).

Thus, $\pi_7([Cr, \chi]^{FN}_p) = 0$ iff $T_p^\top - 2 T_p - tr(T_p)Id = 0$. Taking the trace, this implies that $tr(T_p) - 2 tr(T_p) - 7 tr(T_p) = 0$ and hence, $tr(T_p) = 0$, and $T_p^\top - 2 T_p = 0$ evidently implies that $T_p = 0$. That is, $\pi_7([Cr, \chi]^{FN}_p) = 0$ iff $T_p = 0$, showing the last statement.
\end{proof}

Next, let us consider the bracket $[\chi, \chi]^{FN}$.

\begin{proposition}\label{prop:chi-chi}
Let $(M, g, \varphi)$ be a manifold with a $G_2$-structure and let $T \in \Om^1(M, TM)$ be its torsion endomorphism with the associated form $\tau \in \Om^2(M)$ from (\ref{def:tau}). Then for each $p \in $M,
\begin{equation} \label{eq:pi-chichi}
{}[\chi, \chi]_p^{FN} = -4 \ast (T_p + T_p^\top)(e_i) \otimes e_i + 6 e^i \wedge \tau_p \wedge \varphi \otimes e_i,
\end{equation}
summing over an orthonormal basis $(e_i)$ of $T_pM$. In particular, $[\chi, \chi]_p^{FN} = 0$ if and only if $T_p = 0$.
\end{proposition}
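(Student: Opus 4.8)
The plan is to follow the scheme of the proof of Proposition~\ref{prop:Cr-chi}, now using the second (the $[K,K]^{FN}$) formula of Proposition~\ref{prop:FN}. Since $\chi=-\delta_g(\ast\varphi)$ and the Fr\"olicher--Nijenhuis bracket is bilinear, the two sign changes cancel, so $[\chi,\chi]^{FN}=[\delta_g(\ast\varphi),\delta_g(\ast\varphi)]^{FN}$. Taking $\Psi:=\ast\varphi\in\Om^4(M)$, so that $k_1=k_2=3$ is odd, the $[K,K]^{FN}$ formula of Proposition~\ref{prop:FN} gives, for an orthonormal basis $(e_i)$ of $T_pM$,
\[
[\chi,\chi]^{FN}_p=2\Big((\imath_{e_i}\ast\varphi)\wedge(\imath_{e_j}\nabla_{e_i}\ast\varphi)+(\imath_{e_j}\imath_{e_i}\ast\varphi)\wedge e^k\wedge\imath_{e_i}\nabla_{e_k}\ast\varphi\Big)\otimes e_j .
\]
Into this I would substitute the torsion identity $\nabla_v\ast\varphi=-(T(v))^\flat\wedge\varphi$ from (\ref{eq:nabla-G2}), i.e.\ $\nabla_{e_i}\ast\varphi=-t_{ij}e^j\wedge\varphi$, reducing everything to the evaluation of two $\Lambda^6T^\ast_pM$-valued expressions that are linear in the coefficients $t_{ij}$ of $T_p$.

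Each of the two summands is then expanded by the Leibniz rule for $\imath_{e_i}$ together with repeated use of the algebraic $G_2$-identities collected in the appendix --- essentially the same ones invoked in the proof of Proposition~\ref{prop:Cr-chi}, in particular (\ref{eq:form13}) and the wedge relations for $\varphi$ and $\ast\varphi$ --- so that every resulting term is rewritten either as $\ast(v^\flat)$ for an explicit vector $v$ linear in $T_p$, or as $e^i\wedge\tau\wedge\varphi$. Collecting, and using $\tau=e^i\wedge T(e_i)^\flat$ from (\ref{def:tau}), should yield precisely $-4\ast(T_p+T_p^\top)(e_i)\otimes e_i+6\,e^i\wedge\tau_p\wedge\varphi\otimes e_i$. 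I expect the sign and $G_2$-constant bookkeeping in this expansion to be the most laborious (but entirely routine) part of the argument.

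For the final equivalence, the implication $T_p=0\Rightarrow[\chi,\chi]^{FN}_p=0$ is immediate from the displayed formula (note $T_p=0$ also forces $\tau_p=0$). For the converse, observe that $\Lambda^6V_7\cong V_7$, so $[\chi,\chi]^{FN}_p$ lies in $V_7\otimes V_7$ and $T_p\mapsto[\chi,\chi]^{FN}_p$ is a $G_2$-equivariant endomorphism of $V_7\otimes V_7\cong V_1\oplus V_7\oplus V_{14}\oplus V_{27}$, hence a scalar on each summand by Schur's lemma. Applying $\ast$ to the formula, the term $-4\ast(T_p+T_p^\top)(e_i)\otimes e_i$ is $-8$ times the symmetric part of $T_p$ (the $V_1\oplus V_{27}$ components), whereas $6\,e^i\wedge\tau_p\wedge\varphi\otimes e_i$ depends only on $\tau_p$, i.e.\ on the skew part of $T_p$ ($V_7\oplus V_{14}$), and after applying $\ast$ again defines a skew endomorphism; thus the symmetric and skew parts decouple, and it suffices to verify the four Schur scalars are nonzero by evaluating the formula on one representative of each of $V_1,V_{27},V_7,V_{14}$ --- e.g.\ $T_p=\mathrm{Id}$ for $V_1$, and $\tau_p=\imath_v\varphi\in\Lambda^2_7V_7$ resp.\ $\tau_p\in\Lambda^2_{14}V_7$ for the skew pieces, using (\ref{decom-L-V7}), (\ref{eq:form13}) and the standard contraction identities of the appendix. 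This gives $[\chi,\chi]^{FN}_p=0\Leftrightarrow T_p=0$. As in Proposition~\ref{prop:Cr-chi}, the only genuine obstacle is computational: controlling signs and $G_2$-contraction constants while reducing the bracket.
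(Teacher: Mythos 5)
Your plan for computing the formula is exactly the paper's: apply the $[K,K]^{FN}$ case of Proposition~\ref{prop:FN} with $\Psi = \ast\varphi$, substitute $\nabla_{e_i}\ast\varphi = -T(e_i)^\flat\wedge\varphi$ from (\ref{eq:nabla-G2}), and reduce through the appendix identities. (For the record, the identities the paper actually uses in this step are (\ref{eq:form-11}), (\ref{eq:form-5}), (\ref{eq:form-6}), and (\ref{formulas-4}), rather than (\ref{eq:form13}), but that is immaterial to the validity of the plan.)

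Where you genuinely diverge is the converse implication $[\chi,\chi]^{FN}_p = 0 \Rightarrow T_p = 0$. The paper argues directly from the coefficient matrix $c_{ij} := \ast(e^i\wedge\gamma_j)$: symmetrizing in $i,j$ kills the $\tau$-term (since $\ast(e^{ij}\wedge\tau\wedge\varphi)$ is skew in $i,j$) and yields $c_{ij}+c_{ji} = -8(t_{ij}+t_{ji}) = 0$, so $T_p$ is skew; then $c_{ij}=0$ collapses to $\ast(e^{ij}\wedge\tau\wedge\varphi) = 0$, which forces $\tau = 0$ and hence $T_p = 0$. Your Schur-lemma argument is equivalent in substance but differently packaged: you invoke $G_2$-equivariance and irreducibility of $V_1,V_7,V_{14},V_{27}$ to reduce to checking four scalars. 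Note that since you already observe the symmetric/skew decoupling, the $V_1$ and $V_{27}$ scalars are instantly $-8$ from the first term, and the $V_7$ and $V_{14}$ scalars reduce, after tracing through $\tau\mapsto\ast(\tau\wedge\varphi)$, to the eigenvalues $2$ and $-1$ of Lemma~\ref{Lambda2-V7} --- the same structural fact the paper uses implicitly in passing from $\ast(e^{ij}\wedge\tau\wedge\varphi)=0$ to $\tau=0$. So the two proofs rest on the same inputs; the paper's manipulation of $c_{ij}$ is shorter and avoids naming Schur's lemma, while your route is more conceptual and makes the eigenvalues explicit. Both are correct.
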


\begin{proof}
According to Proposition \ref{prop:FN} we have
\[
{}[\chi, \chi]^{FN}_p = \gamma_j \otimes e_j,
\]
where
\begin{equation} \label{chichiVer1}
\gamma_j = 2 ((\imath_{e_k} \ast \varphi) \wedge (\imath_{e_j} \nabla_{e_k} \ast \varphi) + (\imath_{e_j} \imath_{e_k} \ast \varphi) \wedge e^l \wedge (\imath_{e_k} \nabla_{e_l} \ast \varphi)).
\end{equation}

Now let $c_{ij} := \ast ( e^i \wedge \gamma_j ) = \la e_i, (\ast \gamma_j)^\flat\ra$. Then 
\begin{equation} \label{def-pi7chichi}
{}[\chi, \chi]^{FN}_p = c_{ij} \ast e^i \otimes e_j,
\end{equation}

In order to evaluate the coefficients $c_{ij}$, we consider the two summands in (\ref{chichiVer1}) separately, and obtain from the first one

\begin{align}
\nonumber
e^i \wedge (\imath_{e_k} \ast \varphi) &\wedge (\imath_{e_j} \nabla_{e_k} \ast \varphi) \stackrel{(\ref{eq:nabla-G2})}=\; - e^i \wedge (\imath_{e_k} \ast \varphi) \wedge (\imath_{e_j} (T(e_k)^\flat \wedge \varphi))\\
\nonumber
=&\; - t_{kj} e^i \wedge (\imath_{e_k} \ast \varphi) \wedge \varphi + e^i \wedge (\imath_{e_k} \ast \varphi) \wedge T(e_k)^\flat \wedge (\imath_{e_j} \varphi)\\
\nonumber
\stackrel{(\ref{eq:form-11}), (\ref{eq:form-5})}=&\; -4 t_{kj} \delta_{ik} \vol + 2(\delta_{ik} t_{kj} - t_{kk} \delta_{ij}) \vol + T(e_k)^\flat \wedge e^{ikj} \wedge \varphi\\
\label{c_ij-1}=&\; -2(t_{ij} + tr(T) \delta_{ij}) \vol + e^{ij} \wedge \tau \wedge \varphi .
\end{align}

From the second summand in (\ref{chichiVer1}) we calculate

\begin{align}
\nonumber
e^i \wedge (\imath_{e_j} & \imath_{e_k} \ast \varphi) \wedge e^l \wedge (\imath_{e_k} \nabla_{e_l} \ast \varphi)\\
\nonumber
\stackrel{(\ref{eq:nabla-G2})}=&\; - e^{il} \wedge (\imath_{e_j} \imath_{e_k} \ast \varphi) \wedge (\imath_{e_k} (T(e_l)^\flat \wedge \varphi))\\
\nonumber
=&\; - t_{lk} e^{il} \wedge (\imath_{e_j} \imath_{e_k} \ast \varphi) \wedge \varphi + e^i \wedge (\imath_{e_j} \imath_{e_k} \ast \varphi) \wedge \tau \wedge (\imath_{e_k} \varphi)\\
\nonumber
\stackrel{(\ref{eq:form-6}), (\ref{formulas-4})}=&\; - t_{lk} \big(2 (\delta_{lj} \delta_{ik} - \delta_{ij} \delta_{lk}) \vol - e^{iljk} \wedge \varphi\big) + 3 e^{ij} \wedge \tau \wedge \varphi\\
\label{c_ij-2}=&\; - 2 (t_{ji} - tr(T) \delta_{ij}) \vol + 2 e^{ij} \wedge \tau \wedge \varphi.
\end{align}

Thus, adding (\ref{c_ij-1}) and (\ref{c_ij-2}), equation (\ref{chichiVer1}) yields

\begin{align*}
c_{ij} \vol = e^i \wedge \gamma_j =&\; 2 \Big( -2(t_{ij} + tr(T) \delta_{ij}) \vol + e^{ij} \wedge \tau \wedge \varphi\\
&\qquad - 2 (t_{ji} - tr(T) \delta_{ij}) \vol + 2 e^{ij} \wedge \tau \wedge \varphi\Big)\\
=&\; 2 (-2(t_{ij} + t_{ji}) + 3 \la \ast e^i, e^j \wedge \tau \wedge \varphi \ra) \vol,
\end{align*}
and from this and (\ref{def-pi7chichi}), the formula (\ref{eq:pi-chichi}) follows.

In order to show the last statement, observe that $[\chi, \chi]^{FN}_p=0$ iff $c_{ij} = 0$ for all $i, j$. Since then $c_{ij} + c_{ji} = -8(t_{ij} + t_{ji})$, it follows that $t_{ij} + t_{ji} = 0$ and hence 
$0 = c_{ij} = 6 \ast ( e^{ij} \wedge \tau \wedge \varphi)$ for all $i, j$ which implies that $\tau = t_{kl}e^{kl} = 0$ and hence, $t_{kl}= t_{lk}$. All of this together implies that $t_{ij} = 0$ for all $i, j$, and hence, $T_p=0$ as asserted.
\end{proof}

We are now ready to show the following result which immediately implies Theorem \ref{thm:Brackets-intro} from the introduction.

\begin{theorem} \label{thm:Brackets}
Let $(M^7, \varphi)$ be a manifold with a $G_2$-structure with associated metric $g = g_\varphi$, let $\nabla$ be the Levi-Civita connection of $g$, and let $T \in \Om^1(M^7, TM^7)$ be its torsion endomorphism defined in Definition \ref{def:torsionG2}. Then for every $p \in M^7$ the following are equivalent.
\begin{enumerate}
\item
$T_p = 0 \in T_p^*M^7 \otimes T_pM^7$.
\item
The $G_2$-structure is torsion-free at $p$, i.e., $(\nabla \varphi)_p = 0$.
\item
$\pi_7([Cr, \chi]_p^{FN}) = 0 \in \Lambda^5_7 T_p^*M^7 \otimes T_pM^7$.
\item
$[Cr, \chi]_p^{FN} = 0 \in \Lambda^5 T_p^*M^7 \otimes T_pM^7$.
\item
$[\chi, \chi]_p^{FN} = 0 \in \Lambda^6 T_p^*M^7 \otimes T_pM^7$.
\end{enumerate}
\end{theorem}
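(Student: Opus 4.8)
The whole statement is a chain of five equivalences, and the strategy is to prove each link using results already established. The equivalence $(1) \Leftrightarrow (2)$ is immediate from Proposition \ref{prop:torsionG2}: equation (\ref{eq:nabla-G2}) shows that $(\nabla_v \varphi)_p = \imath_{T_p(v)} \ast\varphi$, and since $v \mapsto \imath_v \ast\varphi$ is injective (it identifies $V_7$ with $\Lambda^3_7 V_7$ by (\ref{decom-L-V7})), we get $(\nabla\varphi)_p = 0$ iff $T_p = 0$. Next, $(1) \Leftrightarrow (4)$ and $(1) \Leftrightarrow (3)$ are precisely the content of Proposition \ref{prop:Cr-chi}: formula (\ref{eq:pi-Crchi}) expresses $\pi_7([Cr,\chi]^{FN}_p)$ as $\ast\varphi$ wedged with the $1$-form dual to $(T_p^\top - 2T_p - tr(T_p))e_i$, and the argument at the end of that proof (take the trace to kill $tr(T_p)$, then $T_p^\top = 2T_p$ forces $T_p = 0$) gives $\pi_7([Cr,\chi]^{FN}_p) = 0 \Leftrightarrow T_p = 0 \Leftrightarrow [Cr,\chi]^{FN}_p = 0$. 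Finally $(1) \Leftrightarrow (5)$ is the last statement of Proposition \ref{prop:chi-chi}, whose proof shows from (\ref{eq:pi-chichi}) that vanishing of $[\chi,\chi]^{FN}_p$ forces first the symmetric part $T_p + T_p^\top$ to vanish, then (via the surviving $e^i \wedge \tau_p \wedge \varphi$ terms) $\tau_p = 0$, whence $T_p$ is symmetric and traceless-against-itself, giving $T_p = 0$.

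So the theorem is essentially an assembly of the three propositions, and no genuinely new computation is needed here; the work is in checking that the implications fit together without gaps. Concretely, I would organize the proof as: first establish $(1)\Leftrightarrow(2)$ from Proposition \ref{prop:torsionG2}; then cite Proposition \ref{prop:Cr-chi} for $(1)\Leftrightarrow(3)\Leftrightarrow(4)$; then cite Proposition \ref{prop:chi-chi} for $(1)\Leftrightarrow(5)$. One should be slightly careful that Proposition \ref{prop:Cr-chi} as stated only asserts the equivalence of $T_p = 0$ with $\pi_7([Cr,\chi]^{FN}_p) = 0$ and with $[Cr,\chi]^{FN}_p = 0$; the implication $(4)\Rightarrow(3)$ is trivial (projection of zero is zero), and $(3)\Rightarrow(1)$ is the nontrivial direction supplied by the proposition, so the cycle closes.

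The only point requiring a moment's thought — and the closest thing to an obstacle — is making sure the injectivity claims underlying $(1)\Leftrightarrow(2)$ are genuinely used: one needs that both maps $v \mapsto \imath_v \ast\varphi \in \Lambda^3 V_7^*$ and $v \mapsto v^\flat \wedge \varphi \in \Lambda^4 V_7^*$ are injective, which follows from (\ref{decom-L-V7}) since these images are exactly $\Lambda^3_7 V_7$ and $\Lambda^4_7 V_7$, each isomorphic to $V_7$. Given that, the proof is short:

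\begin{proof}
$(1) \Leftrightarrow (2)$: By Proposition \ref{prop:torsionG2}, $\nabla_v \varphi = \imath_{T(v)} \ast\varphi$ for all $v$. Since the map $V_7 \to \Lambda^3_7 V_7$, $w \mapsto \imath_w \ast\varphi$, is an isomorphism by (\ref{decom-L-V7}), we have $(\nabla\varphi)_p = 0$ if and only if $T_p(v) = 0$ for all $v \in T_pM^7$, i.e. $T_p = 0$.

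$(1) \Leftrightarrow (3) \Leftrightarrow (4)$: This is the last assertion of Proposition \ref{prop:Cr-chi}, together with the trivial observation that $[Cr,\chi]^{FN}_p = 0$ implies $\pi_7([Cr,\chi]^{FN}_p) = 0$.

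$(1) \Leftrightarrow (5)$: This is the last assertion of Proposition \ref{prop:chi-chi}.
\end{proof}
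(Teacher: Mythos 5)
Your proof is correct and follows essentially the same strategy as the paper: the theorem is assembled from Propositions \ref{prop:torsionG2}, \ref{prop:Cr-chi} and \ref{prop:chi-chi}, with the trivial implication $(4)\Rightarrow(3)$ closing the cycle. The only cosmetic difference is that you derive $(1)\Leftrightarrow(2)$ directly from Proposition \ref{prop:torsionG2} together with the injectivity of $v\mapsto\imath_v\ast\varphi$ coming from (\ref{decom-L-V7}), while the paper simply cites \cite{FG1982} for this well-known equivalence — a harmless elaboration.
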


\begin{proof} The equivalence of the first two statements is well known, see e.g. \cite{FG1982}. Proposition \ref{prop:Cr-chi} shows the equivalence of the first and the third, whereas Proposition \ref{prop:chi-chi} shows the equivalence of the first and the last statement. That $(\nabla \varphi)_p=0$ implies $[Cr, \chi]_p^{FN} = 0$ is immediate from the formula of the bracket in Proposition \ref{prop:FN}, and obviously, $[Cr, \chi]_p^{FN} = 0$ implies $\pi_7([Cr, \chi]_p^{FN}) = 0$.
\end{proof}

\section{Cross products and $\Sp$-structures}\label{sec:spin7}

\subsection{$\Sp$-structures and associated cross products}\label{subs:spin7str}

The exposition in this section mainly follows the references \cite{Bryant1987}, \cite{Fernandez1986}, \cite{HL1982}.

Let $M$ be an oriented $8$-manifold. A {\it $\Sp$-structure on $M$ }is a $4$-form $\Phi \in \Om^4(M)$ such that at each $p \in M$ there is a positively oriented basis $(e_\mu)_{\mu=0}^7$ of $T_pM$ with dual basis $(e^\mu)_{\mu=0}^7$ such that $\Phi_p \in \Lambda^4 T_pM$ is of the form 
\begin{eqnarray} \label{Phi4}
\Phi_p & := & e^{0123} + e^{0145} + e^{0167} + e^{0246} - e^{0257} - e^{0347} - e^{0356}\\
\nonumber & & + e^{4567} + e^{2367} + e^{2345} + e^{1357} - e^{1346} - e^{1256} - e^{1247}.
\end{eqnarray}

Throughout this section, we shall use Greek indices $\mu, \nu, \ldots$ to run over $0, \ldots, 7$, whereas Latin indices $i,j, \ldots$ range over $1, \ldots, 7$.

A basis $(e_\mu)$ of $T_pM$ whose dual basis $(e^\mu)$ satisfies (\ref{Phi4}) is called a {\em $\Sp$-frame}. Observe that if we define for a $\Sp$-frame $(e_\mu)$ the forms $\varphi_p$ and $\ast_7 \varphi_p$ on $V_p := \rmspan(e_i)_{i=1}^7 \subset T_pM$ as in (\ref{varphi}) and (\ref{varphi*}), then
\[
\Phi_p = e^0 \wedge \varphi_p + \ast_7 \varphi_p.
\]

The stabilizer of $\Phi_p$ is the group $\Sp$ acting on $T_pM$ via the spinor representation, and there is a unique $\Sp$-invariant Riemannian metric $g_\Phi$ on $M$ such that each $\Sp$-frame is orthonormal. In particular, $\Phi$ is self-dual w.r.t. $g_\Phi$. The set of all $\Sp$-frames forms a principal $\Sp$-bundle
\[
\Sp_M = \Sp_{(M, \Phi)} \longrightarrow M,
\]
and again, for each $\Sp$-module $W$ we obtain the associated vector bundle
\begin{equation} \label{eq:def-Spin7-bundles}
W(M) := \Sp_M \times_{\Sp} W \longrightarrow M.
\end{equation}
For instance, if we denote the $k$-dimensional irreducible $\Sp$-module by $W_k$ (in case the dimension uniquely specifies this module), then
\[
W_8(M) \cong TM \cong T^*M.
\]

It is well known that the action of $\Sp$ on $W_8$ is transitive on the unit sphere $S^7 \subset W_8$, and the stabilizer of an element is isomorphic to $G_2 \subset \Sp$. In analogy of the products $Cr$ and $\chi$ on manifolds with a $G_2$-structure in Definition \ref{def:Crchi}, we define on a $\Sp$-manifold $M$ a triple product as follows.

\begin{definition} \label{def:P}
Let $(M, \Phi)$ be manifold with a $\Sp$-structure, and let $g = g_\Phi$ be the induced Riemannian metric. Then the $TM$-valued form $P =P_\Phi \in \Om^3(M, TM)$ is defined by 
\[
P_\Phi := -\delta_{g_\Phi}(\Phi),
\]
and is called the {\em $3$-fold cross product on $M$}. That is, for $x,y,z,w \in TM$ we have
\begin{equation} \label{def-P}
g(P(x,y,z), w) = \Phi (x,y,z,w).
\end{equation}
\end{definition}

We shall usually suppress the indices $\Phi$ for $g$ and $P$ if it is clear from the context which $\Sp$-structure $\Phi$ is used.

\subsection{$\Sp$-representations}\label{subs:decomspin7}

In this section, we shall discuss the decomposition of symmetric and anti-symmetric powers of $W_8$ as $\Sp$-modules. For its exterior powers, we obtain the decompositions 
\begin{align}
\nonumber
\Lambda^0 W_8 &\cong \Lambda^8 W_8 \cong W_1, \qquad
& \Lambda^2 W_8 \cong \Lambda^6 W_8 &\cong W_7 \oplus W_{21},\\
\label{decom1-L-W8}
\Lambda^1 W_8 &\cong \Lambda^7 W_8 \cong W_8, \qquad
& \Lambda^3 W_8 \cong \Lambda^5 W_8 &\cong W_8 \oplus W_{48},\\
\nonumber
\Lambda^4 W_8 &\cong W_1 \oplus W_7 \oplus W_{27} \oplus W_{35}
\end{align}
where $\Lambda^k W_8 \cong \Lambda^{8-k} W_8$ via the Hodge-$\ast$. Again, we denote by $\Lambda^k_l W_8 \subset \Lambda^k W_8$ the subspace isomorphic to $W_l$ in the above notation.

Moreover, there are also irreducible decompositions of the symmetric powers of $W_7$ and $W_8$ as
\begin{equation} \label{decom-Sk-W7}
\odot^2 W_7 \cong W_1 \oplus W_{27}, \qquad \odot^2 W_8 \cong W_1 \oplus W_{35}
\end{equation}
into the induced $\Sp$-invariant metric and the trace free symmetric tensors; see \cite{Humphreys}.

\begin{lemma} \label{lem:lambdas}
Let $e^0 \in W_8$ be a unit vector, let $V_7 := e_0^\perp$ on which $\Sp$ acts as the double cover of $SO(7)$, so that $V_7 \cong W_7$ as a $\Sp$-module. Then the following maps are $\Sp$-equivariant embeddings.
\begin{equation} \label{def:lambda}
\lambda^k: W_7 \longrightarrow \Lambda^k W_8, \qquad 
\begin{array}{ll} \lambda^2(v) := e^0 \wedge v^\flat + (\imath_v \varphi)\\[2mm]
\lambda^4(v) := e^0 \wedge (\imath_v \ast_7 \varphi) - v^\flat \wedge \varphi\\[2mm]
\lambda^6(v) := \Phi \wedge \lambda^2(v) = 3 \ast \lambda^2(v)
\end{array}
\end{equation}
Here $\ast$ and $\ast_7$ denote the Hodge-$\ast$ in $W_8$ and $V_7$, respectively.
\end{lemma}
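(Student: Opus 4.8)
The plan is to verify two things for each of the three maps $\lambda^k$: first, that it is $\Sp$-equivariant, and second, that it is injective. Since $\Sp$-equivariance is checked by a representation-theoretic argument while injectivity will follow from an explicit norm computation, I would organize the proof around these two tasks and treat the three cases $k=2,4,6$ in turn, reducing $k=6$ to $k=2$ via the Hodge star.

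For $\Sp$-equivariance, the key observation is that the stabilizer of $e^0 \in W_8$ is precisely $G_2 \subset \Sp$, and that $G_2$ acts on $V_7 = (e^0)^\perp$ as the standard seven-dimensional representation while fixing $\varphi_p$ and $\ast_7\varphi_p$ (the forms defined on $V_p$ by (\ref{varphi}) and (\ref{varphi*})), and fixing $\Phi_p = e^0 \wedge \varphi_p + \ast_7 \varphi_p$. Therefore each of the three formulas defining $\lambda^k(v)$ is built entirely out of $G_2$-equivariant operations ($v \mapsto v^\flat$, $v \mapsto \imath_v \varphi$, $v \mapsto \imath_v \ast_7\varphi$, wedging with the fixed forms $e^0$, $\varphi$, $\ast_7\varphi$, $\Phi$), so each $\lambda^k$ is automatically $G_2$-equivariant. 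To upgrade from $G_2$-equivariance to $\Sp$-equivariance one argues as follows: a $G_2$-equivariant map $f: W_7 \to \Lambda^k W_8$ between $\Sp$-modules need not be $\Sp$-equivariant in general, but here we use that $W_7$ appears with multiplicity one in $\Lambda^k W_8$ for $k = 2, 4, 6$ by (\ref{decom1-L-W8}). Indeed, $\Lambda^2 W_8 \cong W_7 \oplus W_{21}$, $\Lambda^6 W_8 \cong W_7 \oplus W_{21}$, and $\Lambda^4 W_8 \cong W_1 \oplus W_7 \oplus W_{27} \oplus W_{35}$. Composing $f$ with the $\Sp$-equivariant projection onto the $W_7$-isotypic summand, and noting that this $W_7$-summand restricted to $G_2$ stays irreducible (all $G_2$-representations are of real type and $W_7|_{G_2} \cong V_7$ is irreducible), a Schur-type argument shows that $f$ composed with the projection to $W_7$ is a scalar multiple of the identity $W_7 \to W_7 \subset \Lambda^k W_8$, hence $\Sp$-equivariant; and the component of $f$ into the other $\Sp$-summands, being $G_2$-equivariant from an irreducible $G_2$-module into $G_2$-modules containing no copy of $V_7$ except inside the $W_7$-summand we already extracted, must vanish. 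A cleaner route that avoids the multiplicity bookkeeping is to verify directly that $\lambda^2(v) = \imath_v \Phi_p$ restricted appropriately, or more precisely to identify each $\lambda^k$ with an honestly $\Sp$-invariant contraction: one checks by a short computation using $\Phi_p = e^0 \wedge \varphi_p + \ast_7\varphi_p$ that for $v \in V_7$,
\begin{equation*}
\imath_v \Phi_p = -e^0 \wedge (\imath_v \varphi_p) + \imath_v \ast_7 \varphi_p,
\end{equation*}
and similar identities exhibit $\lambda^2$ and $\lambda^4$ as restrictions to $V_7$ of genuinely $\Sp$-equivariant maps $W_8 \to \Lambda^k W_8$ (contraction or wedging with the $\Sp$-invariant form $\Phi$); once written this way, $\Sp$-equivariance is manifest, and the only content left is injectivity.

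For injectivity, I would simply compute the pointwise norm $|\lambda^k(v)|^2$ in terms of $|v|^2$ using the orthonormal $\Sp$-frame, exploiting that $e^0 \perp V_7$, that $v^\flat$, $\imath_v\varphi$, $\imath_v\ast_7\varphi$ live in complementary graded pieces, and the standard $G_2$-identities $|\imath_v \varphi|^2 = 2|v|^2$ and $|\imath_v \ast_7\varphi|^2 = 3|v|^2$ (which follow from (\ref{decom-L-V7}) and are among the appendix identities). For instance, $|\lambda^2(v)|^2 = |e^0 \wedge v^\flat|^2 + |\imath_v\varphi|^2 = |v|^2 + 2|v|^2 = 3|v|^2$, and similarly $|\lambda^4(v)|^2 = |\imath_v \ast_7\varphi|^2 + |v^\flat \wedge \varphi|^2 = 3|v|^2 + 3|v|^2 = 6|v|^2$ using $|v^\flat \wedge \varphi|^2 = 3|v|^2$ (cf. the norm of $\Lambda^4_7 V_7$-elements). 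Since $|\lambda^k(v)|^2 = c_k |v|^2$ with $c_k > 0$, each $\lambda^k$ is injective. The claim $\lambda^6(v) = \Phi \wedge \lambda^2(v) = 3\ast\lambda^2(v)$ then follows by observing that $\Phi \wedge \cdot$ and $3\ast$ agree on the $W_7$-summand $\Lambda^2_7 W_8$: both are $\Sp$-equivariant maps $\Lambda^2_7 W_8 \to \Lambda^6 W_8$, the target contains a unique copy of $W_7$, so by Schur they differ by a scalar, and evaluating on a single explicit element (say $v = e_0$, using (\ref{Phi4})) pins the scalar to $1$, i.e. $\Phi \wedge \lambda^2(v) = 3 \ast \lambda^2(v)$; alternatively one computes $\Phi \wedge \lambda^2(e_0)$ and $\ast\lambda^2(e_0)$ directly from (\ref{Phi4}) and compares.

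The main obstacle I anticipate is not injectivity — that is a one-line norm count — but making the passage from $G_2$-equivariance to $\Sp$-equivariance genuinely rigorous, since the naive "it's built from invariant pieces" argument only gives $G_2$-equivariance. The robust fix is the identification of $\lambda^2$ and $\lambda^4$ (up to sign/constant) with restrictions of the manifestly $\Sp$-equivariant operations $w \mapsto \imath_w \Phi$ and $w \mapsto w^\flat \wedge \Phi$ (or their Hodge duals) to the subspace $V_7 \subset W_8$; verifying these identifications requires exactly the decomposition $\Phi_p = e^0 \wedge \varphi_p + \ast_7 \varphi_p$ together with the contraction rules $\imath_v(e^0 \wedge \alpha) = -e^0 \wedge \imath_v\alpha$ for $v \in V_7$, and is a short bookkeeping exercise rather than a representation-theoretic subtlety. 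Once that identification is in hand, the lemma is complete.
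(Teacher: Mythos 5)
Your proposal identifies the right subtlety — that $G_2$-equivariance of the given formulas is easy but $\Sp$-equivariance is not — and the right strategy (compare with the unique $\Sp$-equivariant map via Schur), but the execution has a genuine gap in exactly the step you flagged as the main obstacle.

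The flaw is in the claim that ``the component of $f$ into the other $\Sp$-summands \dots must vanish'' because those summands ``contain no copy of $V_7$.'' This is false. Restricting the non-$W_7$ summands of $\Lambda^k W_8$ to $G_2 \subset \Sp$, one has $W_{21}|_{G_2} \cong V_7 \oplus V_{14}$ (since $W_{21} \cong \so(7)$ and $\so(7) = \g_2 \oplus \m$ with $\m \cong V_7$) and $W_{35}|_{G_2} \cong V_1 \oplus V_7 \oplus V_{27}$. So for both $k=2$ and $k=4$ there are $G_2$-equivariant maps $V_7 \to \Lambda^k W_8$ that are \emph{not} $\Sp$-equivariant, and mere $G_2$-equivariance of $\lambda^k$ does not force the unwanted components to vanish. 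What is actually needed, and what the paper supplies, is an independent verification that the explicit formula for $\lambda^k(v)$ lands in the isotypic summand $\Lambda^k_7 W_8$ \emph{before} the Schur argument is invoked. For $\lambda^4$ the paper does this by recognizing $\lambda^4(v) = (e_0^\flat \wedge v^\flat)\cdot\Phi$, which lies in the infinitesimal $\so(W_8)$-orbit of $\Phi$, i.e.\ in $\Lambda^4_7 W_8$; then $G_2$-equivariance and uniqueness of the $\Sp$-equivariant map $W_7 \to \Lambda^4_7 W_8$ finish the argument. For $\lambda^2$ the paper cites \cite[p.~68]{SW2017}, and $\lambda^6$ is deduced from $\lambda^2$ via $\Phi \wedge (\cdot) = 3\ast$ on $\Lambda^2_7 W_8$.

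Your proposed ``cleaner route'' also does not close the gap. You suggest exhibiting $\lambda^2$ and $\lambda^4$ as restrictions to $V_7$ of $\Sp$-equivariant maps $W_8 \to \Lambda^k W_8$ given by contraction with or wedging with $\Phi$, but these operations do not land in the right degrees: $v \mapsto \imath_v\Phi$ has image in $\Lambda^3 W_8$, and $v \mapsto v^\flat \wedge \Phi$ in $\Lambda^5 W_8$, whereas $\lambda^2$ and $\lambda^4$ take values in $\Lambda^2 W_8$ and $\Lambda^4 W_8$. (Indeed there is no $\Sp$-equivariant map $W_8 \to \Lambda^2 W_8$ at all, since $W_8$ is not a summand of $\Lambda^2 W_8 \cong W_7 \oplus W_{21}$.) Moreover, even if such maps existed, restricting to $V_7 = e_0^\perp$ — which is not an $\Sp$-invariant subspace — would only yield $G_2$-equivariance, returning you to the same problem. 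Finally, two small numerical slips in the injectivity computation: with $\varphi$ as in (\ref{varphi}) one has $|\imath_v\varphi|^2 = 3|v|^2$ and $|\imath_v\ast_7\varphi|^2 = 4|v|^2$, not $2|v|^2$ and $3|v|^2$; this does not affect the conclusion, and in any case injectivity is automatic once $\Sp$-equivariance from the irreducible $W_7$ is established.
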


\begin{proof} The decompositions in (\ref{decom-Sk-W7}) imply that there are $\Sp$-equivariant maps $\lambda^k: W_7 \to \Lambda^k W_8$, and these are unique up to rescaling.

The equivariance of $\lambda^2$ follows from \cite[p.68]{SW2017}, and thus, $\Phi \wedge \lambda^2(v) \in \Lambda^6_7 W_8$, whence $\Phi \wedge \lambda^2(v) = 3 \ast \lambda^2(v)$ follows from \cite[Theorem 9.8]{SW2017}. This shows the statement on $\lambda^6$.

By \cite[Theorem 9.8]{SW2017}, $\Lambda^4_7 W_8$ is the infinitesimal orbit of $\Phi$ under the action of $\so(W_8) \cong \Lambda^2 W_8$. That is,
\[
\Lambda^4_7 W_8 = \{(u^\flat \wedge v^\flat) \cdot \Phi \mid u, v \in W_8\} = \{ u^\flat \wedge (\imath_v \Phi) - v^\flat \wedge (\imath_u \Phi) \mid u, v \in W_8\}.
\]
Setting $u := e_0$ and picking $v \in e_0^\perp \cong W_7$ for a $\Sp$-frame $(e_\mu)$, it follows that the image of $\lambda^4$ equals $\Lambda^4_7 W_8$, and since $\lambda^4$ is evidently $G_2$-equivariant, it must coincide with the $\Sp$-equivariant map $W_7 \to \Lambda^4_7 W_8$.
\end{proof}

From this lemma, we obtain the following descriptions of the decompositions, which essentially recapitulates \cite[Theorem 9.8]{SW2017}.

\begin{align}
\nonumber
\Lambda^k_7 W_8 &= \{\lambda^k(v) \mid v \in V_7\} \quad \mbox{for $k = 2,4,6$},\\
\nonumber
\Lambda^2_{21} W_8 &= \{ \alpha^2 \in \Lambda^2 W_8 \mid \alpha^2 \wedge \Phi \wedge \lambda^2(v) = 0\; \mbox{for all $v \in V_7$}\},\\
\nonumber
\Lambda^6_{21} W_8 &= \{ \alpha^6 \in \Lambda^6 W_8 \mid \alpha^6 \wedge \lambda^2(v) = 0\; \mbox{for all $v \in V_7$}\},\\
\nonumber
\Lambda^3_8 W_8 &= \{ \imath_a \Phi \mid a \in W_8\},\\
\nonumber
\Lambda^5_8 W_8 &= \{ a^\flat \wedge \Phi \mid a \in W_8\},\\
\label{decom-L-W8}
\Lambda^3_{48} W_8 &= \{ \alpha^3 \in \Lambda^3 W_8 \mid \Phi \wedge \alpha^3 = 0\},\\
\nonumber
\Lambda^5_{48} W_8 &= \{ \alpha^5 \in \Lambda^3 W_8 \mid \Phi \wedge \ast \alpha^5 = 0\},\\
\nonumber
\Lambda^4_1 W_8 &= \R \Phi,\\
\nonumber
\Lambda^4_{27} W_8 &= \rmspan\{\lambda^2(v) \wedge \lambda^2(w) \mid v,w \in V_7, \la v,w \ra = 0\}\\
\nonumber
\Lambda^4_{35} W_8 &= \{ \alpha^4 \mid \ast \alpha^4 = - \alpha^4\}.
\end{align}

We also recall the decomposition of the tensor product
\begin{equation} \label{eq:W8W7}
Lin(W_8, W_7) := W_8^* \otimes W_7 = W_8 \oplus W_{48}.
\end{equation}
Here, the summand isomorphic to $W_8$ is given as
\begin{equation} \label{eq:W8W7-summand}
\{ (\imath_a \lambda^2(e_i)) \otimes e_i \mid a \in W_8\},
\end{equation}
where the sum is taken over an orthonormal basis $(e_i)$ of $V_7 \cong W_7$. Finally, we define the $\Sp$-invariant tensor $\sigma \in (W_8 \otimes W_7 \otimes W_8 \otimes W_7)^*$ by
\begin{equation} \label{eq:sigma}
\sigma(a, u, b, v) := \frac12 \ast (a^\flat \wedge b^\flat \wedge \lambda^4(u) \wedge \lambda^2(v)).
\end{equation}

Contraction with the inner products on $W_7$ and $W_8$ induces a $\Sp$-equivariant map
\begin{align} \label{eq:phi-sigma}
\phi_\sigma: Lin(W_8, W_7) &\longrightarrow Lin(W_8, W_7)\\
\nonumber \phi_\sigma(A)(a) &:= \sigma(a, A(e_\mu), e_\mu, e_i)\; e_i.
\end{align}
We calculate
\begin{align}
\nonumber
\lambda^4(u) \wedge \lambda^2(v) & =\; (e^0 \wedge (\imath_u \ast_7 \varphi) - u^\flat \wedge\varphi) \wedge (e^0 \wedge v^\flat + \imath_v \varphi)\\
\nonumber
& =\; e^0 \wedge (\imath_u \ast_7 \varphi) \wedge (\imath_v \varphi) + e^0 \wedge u^\flat \wedge v^\flat \wedge \varphi - u^\flat \wedge \varphi \wedge (\imath_v \varphi)\\ 
\label{eq:l4l2} & \stackrel{(\ref{eq:form13}), (\ref{eq:form14})}=\; 2 u^\flat \wedge v^\flat \wedge (e^0 \wedge \varphi - \ast_7 \varphi) - 2 e^0 \wedge (\ast_7 ( u^\flat \wedge v^\flat )).
\end{align}

\begin{lemma} \label{lem:phisigma}
The map $\phi_\sigma$ has eigenvalues $-1$ and $6$ with multiplicity $48$ and $8$, respectively.
\end{lemma}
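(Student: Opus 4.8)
The plan is to combine Schur's lemma with two elementary scalar computations. By \eqref{eq:W8W7}, the space $Lin(W_8, W_7)$ is the direct sum of the two \emph{non-isomorphic} irreducible $\Sp$-modules $W_8$ and $W_{48}$, and $\phi_\sigma$ is $\Sp$-equivariant. Hence $\phi_\sigma$ preserves this splitting and acts by a scalar $\lambda_8$ on the summand $W_8$ described in \eqref{eq:W8W7-summand} and by a scalar $\lambda_{48}$ on $W_{48}$; these are its only eigenvalues, with multiplicities $8$ and $48$. Thus it remains to determine $\lambda_8$ and $\lambda_{48}$, and for this it suffices to compute two scalar invariants of $\phi_\sigma$ --- its trace and its value on one well-chosen test element.

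First I would show $\mathrm{tr}(\phi_\sigma) = 0$. Fixing a $\Sp$-frame $(e_\mu)$ and using the induced orthonormal basis $E_{\mu i} := e^\mu \otimes e_i$ of $Lin(W_8, W_7)$, one reads off from \eqref{eq:phi-sigma} that the $E_{\mu i}$-component of $\phi_\sigma(E_{\mu i})$ equals $\sigma(e_\mu, e_i, e_\mu, e_i)$, which by \eqref{eq:sigma} is $\frac12 \ast(e^\mu \wedge e^\mu \wedge \lambda^4(e_i) \wedge \lambda^2(e_i)) = 0$ because of the repeated factor $e^\mu$. Summing over $\mu$ and $i$ gives $\mathrm{tr}(\phi_\sigma) = 0$, i.e.\ $8\lambda_8 + 48\lambda_{48} = 0$.

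Next I would evaluate $\phi_\sigma$ on the element $A := \sum_{i=1}^7 e^i \otimes e_i$, that is, the orthogonal projection $W_8 \to V_7 = e_0^\perp$ composed with the inclusion $V_7 \cong W_7$. This $A$ lies in the $W_8$-summand \eqref{eq:W8W7-summand}: taking $a = e_0$ there and using $\imath_{e_0}\varphi = 0$ together with the formula for $\lambda^2$ from Lemma~\ref{lem:lambdas} gives $\imath_{e_0}\lambda^2(e_i) = e^i$. Then \eqref{eq:phi-sigma} yields $\phi_\sigma(A)(e_0) = 0$ (again by the $e^\mu \wedge e^\mu = 0$ mechanism) and $\phi_\sigma(A)(e_k) = \sum_i \bigl(\sum_{j=1}^7 \sigma(e_k, e_j, e_j, e_i)\bigr)\, e_i$ for $k \in \{1,\dots,7\}$. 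To evaluate $\sum_j \sigma(e_k, e_j, e_j, e_i)$ I would expand $\lambda^4(e_j) \wedge \lambda^2(e_i)$ via \eqref{eq:l4l2}; wedging on the left by $e^k \wedge e^j$ annihilates the first term (repeated $e^j$), leaving $-2\, e^0 \wedge e^k \wedge e^j \wedge \ast_7(e^j \wedge e^i)$, and $e^k \wedge e^j \wedge \ast_7(e^j \wedge e^i) = \la e^k \wedge e^j, e^j \wedge e^i\ra\, \vol_7$ in $V_7$. The elementary identity $\sum_{j=1}^7 \la e^k \wedge e^j, e^j \wedge e^i\ra = -6\delta_{ki}$ together with $\vol_8 = e^0 \wedge \vol_7$ then gives $\sum_j \sigma(e_k, e_j, e_j, e_i) = 6\delta_{ki}$, so $\phi_\sigma(A) = 6A$ and $\lambda_8 = 6$. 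Combined with $8\lambda_8 + 48\lambda_{48} = 0$ this forces $\lambda_{48} = -1$, which is the claim.

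The main obstacle is this last computation: one must correctly place the distinguished element $A$ inside the $W_8$-summand \eqref{eq:W8W7-summand}, carefully keep the two Hodge operators $\ast$ on $W_8$ and $\ast_7$ on $V_7$ apart, and track the sign produced by commuting $e^0$ past the $V_7$-forms. Everything else is a formal consequence of Schur's lemma and the vanishing $e^\mu \wedge e^\mu = 0$.
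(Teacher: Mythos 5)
Your proof is correct and follows essentially the same route as the paper's: a Schur argument for the block structure, the vanishing of diagonal entries to get $\mathrm{tr}(\phi_\sigma)=0$, and a direct evaluation on the element $A=A_{e_0}=\sum_i e^i\otimes e_i$ of the $W_8$-summand to get the eigenvalue $6$, from which $\lambda_{48}=-1$ follows. The only small gap is that when you invoke Schur's lemma to conclude $\phi_\sigma|_{W_{48}}$ is a \emph{real} scalar you are tacitly using that $W_{48}$ is of real type as a $\Sp$-module (true, but worth saying); the paper sidesteps this by observing that $\sigma(a,u,b,v)=\sigma(b,v,a,u)$ makes $\phi_\sigma$ self-adjoint, which guarantees real eigenvalues without appealing to the Frobenius--Schur type.
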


\begin{proof} Observe that the $\Sp$-invariant inner products on $W_7$ and $W_8$ induce an inner product on $Lin(W_8, W_7) = W_8^* \otimes W_7$ for which $(e^\mu \otimes e_i)$ is an orthonormal basis whenever $(e_\mu)$ is an orthonormal basis of $W_8$ so that $V_7 = e_0^\perp$ is spanned by $(e_i)$. This induced inner product satisfies
\[
\la \phi_\sigma(e^\mu \otimes e_i), e^\nu \otimes e_j \ra_{Lin(W_8, W_7)} = \la \phi_\sigma(e^\mu \otimes e_i)(e_\nu), e_j\ra_{W_7} = \sigma(e_\nu, e_i, e_\mu, e_j),
\]
and since $\sigma(e_\mu, e_i, e_\mu, e_j) = 0$, it follows that the matrix representation of $\phi_\sigma$ w.r.t. the basis $(e^\mu \otimes e_i)$ has $0$'s on the diagonal, whence $tr(\phi_\sigma) = 0$. Furthermore, $\phi_\sigma$ is self-adjoint since $\sigma(a,u,b,v) = \sigma(b,v,a,u)$ by (\ref{eq:sigma}) 
and (\ref{eq:l4l2}), whence has real eigenvalues.

Decomposing $Lin(W_8, W_7) \stackrel{(\ref{eq:W8W7})}\cong W_8 \oplus W_{48}$, (\ref{eq:W8W7-summand}) implies that the elements in the summand congruent to $W_8$ are given by the maps
\[
A_a: W_8 \longrightarrow W_7, \qquad A_a(b) := \la (\imath_a \lambda^2(e_i))^\#, b\ra e_i
\]
for a fixed $a \in W_8$. In order to calculate $\phi_\sigma(A_a)$, observe that $\Sp$ acts transitively on the unit sphere, whence we may assume w.l.o.g. that $a = e_0$, so that
\[
A_{e_0}(b) = \la e_i, b\ra e_i = pr_{e_0^\perp}(b),
\]
where $pr_{e_0^\perp}: W_8 \rightarrow e_0^\perp = V_7$ is the orthogonal projection.
Thus,
\begin{align*}
\phi_\sigma(A_{e_0})(b) &=\; \sigma(b, A_{e_0}(e_\mu), e_\mu, e_i)\; e_i = \sigma(b, e_j, e_j, e_i)\; e_i\\
\stackrel{(\ref{eq:l4l2})}=&\; 
\ast \Big( b^\flat \wedge e^j \wedge \Big(e^j \wedge e^i \wedge (e^0 \wedge \varphi - \ast_7 \varphi) - e^0 \wedge (\ast_7 e^{ji})\Big) \Big) e_i\\
=&\; \ast \Big( e^0 \wedge b^\flat \wedge e^j \wedge (\ast_7 e^{ij}) \Big) e_i\\
=&\; (1 - \delta_{ij}) \ast (e^0 \wedge b^\flat \wedge \ast_7 e^i) e_i\\
=&\; 6 \la b, e_i\ra e_i = 6 A_{e_0}(b),
\end{align*}
so that $\phi_\sigma(A_a) = 6 A_a$ for all $A_a$. By Schur's lemma and since $\phi_\sigma$ is self-adjoint, $\phi_\sigma|_{W_{48}} = c Id_{W_{48}}$ for some $c \in \R$, 
whence
\[
0 = tr(\phi_\sigma) = 6 \dim W_8 + c \dim W_{48},
\]
and from this, $c = -1$ and the lemma follows.
\end{proof}

For a manifold with a $\Sp$-structure $(M, \Phi)$ and induced metric $g = g_\Phi$, the covariant derivative $g_\Phi$ $\nabla_v \Phi$ w.r.t. the Levi-Civita connection is contained in the infinitesimal orbit of $\so(T_pM, g_p)$ \cite{Bryant1987} and hence in $\Lambda^4_7 T^\ast M$. That is, there is a section $T \in \Om^1(M, W_7(M)) = \Gamma(Lin(TM, W_7 (M)))$ such that
\begin{equation} \label{eq:nabla-Spin7}
\nabla_v \Phi = \lambda^4(T(v)) = e^0 \wedge (\imath_{T(v)} \ast \varphi) - (T(v))^\flat \wedge \varphi
\end{equation}
with the map $\lambda^4: W_7 \to \Lambda^4_7 T_pM$ from (\ref{def:lambda}).
In analogy to Definition \ref{def:torsionG2}, we use the following terminology.

\begin{definition} \label{def:torsionSpin7}
Let $(M, \Phi)$ be a manifold with a $\Sp$-structure. The section $T \in \Om^1(M, W_7(M))$ for which (\ref{eq:nabla-Spin7}) holds is called the {\em torsion endomorphism }of the $\Sp$-structure.
\end{definition}


\subsection{The Fr\"olicher-Nijenhuis brackets on a manifold with a $\Sp$-structure}

Recall the section $P = - \delta_g \Phi \in \Om^3(M, TM)$ on a manifold with a $\Sp$-structure $(M, \Phi)$ from Definition \ref{def:P}. We wish to relate its Fr\"olicher-Nijenhuis bracket to its torsion. In order to do this, recall that $[P,P]^{FN} \in \Om^6(M, TM)$.

Due to the decomposition $\Lambda^6 W_8 = \Lambda^6_7 W_8 \oplus \Lambda^6_{21} W_8$ as a $G_2$-module, we may decompose
\[
\Om^6(M, TM) = \Gamma(M, \Lambda^6_7 T^*M \otimes TM) \oplus \Gamma(M, \Lambda^6_{21} T^*M \otimes TM),
\]
and we denote the projections onto the two summands by $\pi_7$ and $\pi_{21}$, respectively. 

\begin{proposition} \label{prop:PP}
Let $(M, \Phi)$ be a manifold with a $\Sp$-structure with the torsion endomorphism $T \in \Om^1(M, W_7(M))$ from (\ref{eq:nabla-Spin7}), and let $P = - \delta_g \Phi \in \Om^3(M, TM)$ be as before. Then for $p \in M$,
\begin{equation} \label{eq:pi7-Phi}
\pi_7([P, P]_p^{FN}) = -\dfrac23 \Phi \wedge \lambda^2\Big( (4 T_p + \phi_\sigma(T_p))(e_\mu) \Big) \otimes e_\mu.
\end{equation}
In particular, $\pi_7([P, P]_p^{FN}) = 0$ iff $T_p = 0$.
\end{proposition}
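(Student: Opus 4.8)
The plan is to follow exactly the template used in the proofs of Propositions \ref{prop:Cr-chi} and \ref{prop:chi-chi}, but now working with the single form $\Psi = \Phi \in \Om^4(M)$. I would fix $p \in M$, choose normal coordinates around $p$, and apply the ``diagonal'' formula from Proposition \ref{prop:FN} (valid since $k_1=k_2=3$ is odd) to get $[P,P]^{FN}_p = \gamma_\mu \otimes e_\mu$ with
\[
\gamma_\mu = 2\Big( (\imath_{e_\nu}\Phi)\wedge(\imath_{e_\mu}\nabla_{e_\nu}\Phi) + (\imath_{e_\mu}\imath_{e_\nu}\Phi)\wedge e^\rho\wedge(\imath_{e_\nu}\nabla_{e_\rho}\Phi)\Big),
\]
summing over an orthonormal $\Sp$-frame. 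Then I would substitute $\nabla_{e_\nu}\Phi = \lambda^4(T(e_\nu))$ from (\ref{eq:nabla-Spin7}), so each $\gamma_\mu$ becomes an explicit algebraic expression in $\Phi$, $\varphi$, $\ast_7\varphi$, $e^0$, and the torsion coefficients $t_{\nu i} := \la T(e_\nu), e_i\ra$.

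Next I would extract the $\pi_7$-component. By the description of $\Lambda^6_7 W_8 = \{\lambda^6(v) = \Phi\wedge\lambda^2(v) = 3\ast\lambda^2(v)\}$ in (\ref{decom-L-W8}) and the orthogonality $\Lambda^6_7 W_8 \perp \Lambda^6_{21} W_8$, the projection $\pi_7(\gamma_\mu)$ is recovered by pairing $\gamma_\mu$ with $\lambda^2(e_i)$ via the wedge-and-$\ast$ pairing — concretely I would compute the $8\times 8$ matrix $d_{\mu i} := \ast(\lambda^2(e_i)\wedge\gamma_\mu)$ (up to the normalizing constant coming from $\ast(\lambda^2(v)\wedge\Phi\wedge\lambda^2(w)) = \mathrm{const}\cdot\la v,w\ra$, which is fixed by the appendix identities), so that $\pi_7([P,P]^{FN}_p) = \mathrm{const}\cdot d_{\mu i}\,\Phi\wedge\lambda^2(e_i)\otimes e_\mu$. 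Evaluating $d_{\mu i}$ reduces to the contraction/wedge identities for $\Phi$, $\lambda^2$, $\lambda^4$ collected in the appendix, together with (\ref{eq:l4l2}); the $\Sp$-invariant tensor $\sigma$ of (\ref{eq:sigma}) and the operator $\phi_\sigma$ of (\ref{eq:phi-sigma}) were introduced precisely so that the terms quadratic in $\nabla\Phi$ repackage as $T_p$ plus $\phi_\sigma(T_p)$; I expect the bookkeeping to collapse to $d_{\mu i} = \mathrm{const}\cdot\la (4T_p + \phi_\sigma(T_p))(e_\mu), e_i\ra$, yielding (\ref{eq:pi7-Phi}).

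For the ``in particular'' clause, $\pi_7([P,P]^{FN}_p) = 0$ is equivalent to $(4\,\mathrm{Id} + \phi_\sigma)(T_p) = 0$ as an element of $Lin(W_8, W_7)$. By Lemma \ref{lem:phisigma}, $\phi_\sigma$ has eigenvalues $-1$ and $6$, so $4\,\mathrm{Id}+\phi_\sigma$ has eigenvalues $3$ and $10$ on the two isotypic summands $W_{48}$ and $W_8$ of $Lin(W_8,W_7)$ — both nonzero — hence $4\,\mathrm{Id}+\phi_\sigma$ is invertible, and $(4\,\mathrm{Id}+\phi_\sigma)(T_p)=0$ forces $T_p = 0$. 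Conversely, $T_p = 0$ makes $\nabla\Phi$ vanish at $p$, so $\gamma_\mu = 0$ and $[P,P]^{FN}_p = 0$, a fortiori $\pi_7([P,P]^{FN}_p)=0$.

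The main obstacle I anticipate is the middle step: correctly evaluating the two wedge products $(\imath_{e_\nu}\Phi)\wedge(\imath_{e_\mu}\lambda^4(T(e_\nu)))$ and $(\imath_{e_\mu}\imath_{e_\nu}\Phi)\wedge e^\rho\wedge(\imath_{e_\nu}\lambda^4(T(e_\rho)))$ and verifying that, after summation over the frame, all the $\Lambda^6_{21}$-pieces and the non-$\lambda^2$ pieces cancel while the surviving $\Lambda^6_7$-piece assembles into exactly $4T_p + \phi_\sigma(T_p)$ with the stated coefficient $-\tfrac23$. This is a lengthy but mechanical computation relying entirely on the representation-theoretic identities proved in the appendix (the analogues of (\ref{eq:form-1})--(\ref{eq:form14}) for $\Sp$), and on the splitting (\ref{eq:W8W7}) together with the eigenvalue computation of Lemma \ref{lem:phisigma}; once the constant is pinned down on, say, the $W_8$-summand where $\phi_\sigma$ acts as $6\,\mathrm{Id}$, Schur's lemma propagates it to all of $Lin(W_8,W_7)$.
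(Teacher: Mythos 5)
Your proposal follows the same route as the paper: apply Proposition \ref{prop:FN} to $\Psi_1=\Psi_2=\Phi$, substitute $\nabla_{e_\nu}\Phi=\lambda^4(T(e_\nu))$, extract $\pi_7(\gamma_\mu)$ by the wedge-and-$\ast$ pairing $\gamma_\mu\wedge\lambda^2(e_i)$ (normalized via the identity $\ast(\Phi\wedge\lambda^2(v)\wedge\lambda^2(w))=12\la v,w\ra$), reduce the resulting contractions using the appendix identities (\ref{spinformula1})--(\ref{spinformula2}) and (\ref{eq:l4l2}) to land on $4T_p+\phi_\sigma(T_p)$, and invoke Lemma \ref{lem:phisigma} to conclude $4\,\mathrm{Id}+\phi_\sigma$ is invertible. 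Aside from a minor slip ($d_{\mu i}$ is $8\times 7$, not $8\times 8$) and the unnecessary worry about ``verifying that the $\Lambda^6_{21}$-pieces cancel'' (the pairing with $\lambda^2(e_i)$ annihilates them automatically), this is essentially the paper's proof.
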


\begin{proof}
By Proposition \ref{prop:FN}, $[P,P]^{FN}_p = \gamma_\mu \otimes e_\mu$, where $\gamma_\mu \in \Lambda^6 T_p^*M$ is given by
\begin{equation}
\gamma_\mu = 2 ((\imath_{e_\nu} \Phi) \wedge \imath_{e_\mu} (\nabla_{e_\nu} \Phi) + (\imath_{e_\mu} \imath_{e_\nu} \Phi) \wedge e^\rho \wedge (\imath_{e_\nu} \nabla_{e_\rho}\Phi)).
\end{equation}

If we decompose $\gamma_\mu = \Phi \wedge \lambda^2(v_\mu) + \gamma_\mu^{21}$ with $\gamma_\mu^{21} \in \Lambda^6_{21} T_p^*M$, then for any $v \in V_7 = e_0^\perp$ we have $\gamma_\mu^{21} \wedge \lambda^2(v) = 0$ by (\ref{decom-L-W8}) and hence,

\begin{align*}
\gamma_\mu \wedge \lambda^2(v) =&\; \Phi \wedge \lambda^2(v_\mu) \wedge \lambda^2(v)\\
 =&\; (e^0 \wedge \varphi + \ast_7 \varphi) \wedge (e^0 \wedge v_\mu^\flat + \imath_{v_\mu} \varphi) \wedge (e^0 \wedge v^\flat + \imath_v \varphi)\\
=&\; e^0 \wedge \varphi \wedge (\imath_{v_\mu} \varphi) \wedge (\imath_v \varphi) + \ast_7 \varphi \wedge e^0 \wedge v_\mu^\flat \wedge (\imath_v \varphi)\\
&\quad + \ast_7 \varphi \wedge (\imath_{v_\mu} \varphi) \wedge e^0 \wedge v^\flat\\
\stackrel{(\ref{eq:form-1}), (\ref{eq:form-0})}= &\; 6 \la v_\mu, v \ra \vol + 3 \la v_\mu, v \ra \vol + 3 \la v_\mu, v \ra \vol = 12 \la v_\mu, v \ra \vol.
\end{align*}
Thus,
\begin{equation}\label{eq:PPl}
\pi_7([P, P]_p^{FN}) = \frac1{12} \ast (\gamma_\mu \wedge \lambda^2(e_i))\; \Phi \wedge \lambda^2(e_i) \otimes e_\mu.
\end{equation}

For arbitrary $v \in V_7 = e_0^\perp$ we compute

\begin{align*}
\nonumber
\gamma_\mu \wedge \lambda^2(v) =&\; 2 ((\imath_{e_\nu} \Phi) \wedge \imath_{e_\mu} (\nabla_{e_\nu} \Phi) \wedge \lambda^2(v)\\
\nonumber
&\qquad + (\imath_{e_\mu} \imath_{e_\nu} \Phi) \wedge e^\rho \wedge (\imath_{e_\nu} \nabla_{e_\rho}\Phi) \wedge \lambda^2(v))\\
\nonumber
=&\; 2 ((\imath_{e_\nu} \Phi) \wedge \imath_{e_\mu} \lambda^4(T_p(e_\nu)) \wedge \lambda^2(v)\\
\nonumber
&\qquad + (\imath_{e_\mu} \imath_{e_\nu} \Phi) \wedge e^\rho \wedge (\imath_{e_{\nu}} \lambda^4(T_p(e_\rho))) \wedge \lambda^2(v))\\
\nonumber
\nonumber
\stackrel{(\ref{spinformula1}), (\ref{spinformula2})}=& 2 ((-4 \delta_{\mu \nu} \la T(e_\nu), v\ra \vol + e^{\nu \mu} \wedge \lambda^4(T_p(e_\nu)) \wedge \lambda^2(v))\\
\nonumber
& + (-12 \delta_{\mu \rho} \la T_p(e_\rho), v \ra \vol + e^{\rho \mu} \wedge \lambda^4(T_p(e_\rho)) \wedge \lambda^2(v))\\
\nonumber
=&\; 2 (-16 \la T_p(e_\mu), v\ra \vol - 2 e^{\mu \nu} \wedge \lambda^4(T_p(e_\nu)) \wedge \lambda^2(v)) \\
\nonumber
=&\; 2 \Big(-16 \la T_p(e_\mu), v\ra - 4 \sigma(e_\mu, T_p(e_\nu), e_\nu, v)\Big) \vol
\\
\stackrel{(\ref{eq:phi-sigma})}=&\; -8 \la (4 T_p + \phi_\sigma(T_p))(e_\mu), v\ra \vol,
\end{align*}
and this together with (\ref{eq:PPl}) implies (\ref{eq:pi7-Phi}) and completes the proof.
\end{proof}

With this, we are now ready to prove the following which immediately implies Theorem \ref{thm:Brackets-Spin-intro} from the introduction.

\begin{theorem} \label{thm:Brackets-Spin}
Let $(M^8, \Phi)$ be a manifold with a $\Sp$-structure $(M^8, \Phi)$, let $\nabla$ be the Levi-Civita connection of $g = g_\Phi$, and let $T \in \Om^1(M^8, W_7(M^8))$ be its torsion endomorphism 
defined in Definition \ref{def:torsionSpin7}. Then for every $p \in M^8$ the following are equivalent.
\begin{enumerate}
\item
$T_p = 0 \in T_p^*M^8 \otimes W^7 (M)_p.$
\item
The $\Sp$-structure is torsion-free at $p$, i.e., $(\nabla \Phi)_p = 0$.
\item
$\pi_7([P, P]_p^{FN}) = 0 \in \Lambda^6_7 T_p^*M^8 \otimes T_pM^8$.
\item
$[P, P]_p^{FN} = 0 \in \Lambda^6 T_p^*M^8 \otimes T_pM^8$.
\end{enumerate}
\end{theorem}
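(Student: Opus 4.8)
The plan is to deduce Theorem~\ref{thm:Brackets-Spin} from Proposition~\ref{prop:PP} exactly as Theorem~\ref{thm:Brackets} was deduced from Propositions~\ref{prop:Cr-chi} and \ref{prop:chi-chi}, so the proof is essentially a bookkeeping argument threading together implications that are already available. First I would establish the cycle of implications $(1)\Rightarrow(2)\Rightarrow(4)\Rightarrow(3)\Rightarrow(1)$, since each link is short. The equivalence $(1)\Leftrightarrow(2)$ is immediate from the defining relation (\ref{eq:nabla-Spin7}): by Lemma~\ref{lem:lambdas} the map $\lambda^4$ is an embedding, hence injective, so $(\nabla_v\Phi)_p = \lambda^4(T_p(v)) = 0$ for all $v$ if and only if $T_p(v) = 0$ for all $v$, i.e. $T_p = 0$. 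This is the analogue of the ``well known'' first equivalence in Theorem~\ref{thm:Brackets}, and here it is even cleaner because it follows directly from injectivity of $\lambda^4$.

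Next I would argue $(2)\Rightarrow(4)$: if $(\nabla\Phi)_p = 0$, then inspecting the formula for $[K,K]^{FN}_p$ in Proposition~\ref{prop:FN} with $\Psi = \Phi$, every term contains a factor $\nabla_{e_i}\Phi$ evaluated at $p$, so the entire bracket $[P,P]^{FN}_p$ vanishes. (One should note $P = -\delta_g\Phi$, and since $\Phi$ has degree $4$ the section $P$ has degree $3$, which is odd, so the specialized ``$k_1 = k_2$ odd'' case of Proposition~\ref{prop:FN} does apply.) The implication $(4)\Rightarrow(3)$ is trivial since $\pi_7$ is a projection: $[P,P]^{FN}_p = 0$ forces $\pi_7([P,P]^{FN}_p) = 0$. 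Finally $(3)\Rightarrow(1)$ is precisely the last sentence of Proposition~\ref{prop:PP}, which states that $\pi_7([P,P]^{FN}_p) = 0$ iff $T_p = 0$. Chaining these gives all four statements equivalent.

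Since all the analytic content is already packaged in Proposition~\ref{prop:PP}, there is no real obstacle in this theorem itself; the one place that warrants a sentence of care is making sure the logical graph actually closes. In particular $(3)\Rightarrow(4)$ is not obvious a priori --- a priori $[P,P]^{FN}_p$ could have a nonzero $\pi_{21}$-component even when its $\pi_7$-component vanishes --- but we do not need that implication directly: we route $(3)\Rightarrow(1)\Rightarrow(2)\Rightarrow(4)$ instead, and it is only \emph{after} the cycle is closed that we learn, as a consequence, that the $\Lambda^6_{21}$-component of $[P,P]^{FN}_p$ automatically vanishes whenever the $\Lambda^6_7$-component does. So the write-up is: quote the $(1)\Leftrightarrow(2)$ equivalence from injectivity of $\lambda^4$ in (\ref{eq:nabla-Spin7}), quote $(1)\Leftrightarrow(3)$ from Proposition~\ref{prop:PP}, observe $(2)\Rightarrow(4)$ from Proposition~\ref{prop:FN}, and observe $(4)\Rightarrow(3)$ since $\pi_7$ is a projection. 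That is all that is needed.
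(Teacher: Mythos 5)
Your proof is correct and follows essentially the same cycle of implications $(1)\Rightarrow(2)\Rightarrow(4)\Rightarrow(3)\Rightarrow(1)$ that the paper uses, with $(2)\Rightarrow(4)$ from Proposition~\ref{prop:FN}, $(4)\Rightarrow(3)$ trivial, and $(3)\Rightarrow(1)$ from Proposition~\ref{prop:PP}. The only (cosmetic) difference is that the paper cites \cite{Fernandez1986} for the equivalence $(1)\Leftrightarrow(2)$, whereas you observe it directly from the injectivity of $\lambda^4$ in (\ref{eq:nabla-Spin7}), which is a clean and self-contained alternative.
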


\begin{proof}
The equivalence of the first two statements was shown in \cite{Fernandez1986}. Also, $T_p = 0$ implies $(\nabla \Phi)_p = 0$, whence by Proposition \ref{prop:FN}, $[P, P]_p^{FN} = 0$, and this trivially implies $\pi_7([P, P]_p^{FN}) = 0$.

By (\ref{eq:pi7-Phi}), $\pi_7([P, P]_p^{FN}) = 0$ iff $4 T_p + \phi_\sigma(T_p) = 0$, and since $\phi_\sigma$ does not have $-4$ as an eigenvalue by Lemma \ref{lem:phisigma}, this implies that $T_p = 0$.
\end{proof}

\section{The 16 classes of $G_2$- and 4 classes of $\Sp$-structures} \label{subs:16classesG2}

In this section, we shall interpret the classification of $G_2$-structures and of $\Sp$-structures (\cite{FG1982} and \cite{Fernandez1986}) in terms of the Fr\"olicher-Nijenhuis bracket. 

For the $G_2$-case, this classification is given by determining which components of the torsion endomorphism $T$ vanish, where $T$ is regarded as a section of the endomorphism bundle $\End(TM^7)$ which is $G_2$-equivariantly isomorphic to
\begin{equation} \label{eq:decomp-endo}
V_7(M^7) \otimes V_7(M^7) \cong V_1(M^7) \oplus V_7(M^7) \oplus V_{14}(M^7) \oplus V_{27}(M^7).
\end{equation}
Since this decomposition has $4$ summands, the classification consists of $2^4 = 16$ cases.

Observe that both $\Lambda^5_7 T^\ast M^7 \otimes TM^7$ and $\Lambda^6 T^\ast M^7 \otimes TM^7$ are $G_2$-equivariantly isomorphic to $V_7(M^7) \otimes V_7(M^7)$, 
where explicit isomorphisms are given by\\\[
\begin{array}{cccccccc}
K: & \Lambda^5_7 T^\ast M^7 \otimes TM^7 & \ni & (\ast \varphi \wedge v^\flat) \otimes w & \longmapsto & v^\flat \otimes w & \in & T^\ast M^7 \otimes TM^7\\[2mm]
L: & \Lambda^6 T_p^\ast M^7 \otimes TM^7 & \ni & (\ast v^\flat) \otimes w & \longmapsto & v^\flat \otimes w & \in & T^\ast M^7 \otimes TM^7.
\end{array}
\]

If $(M^7, \varphi)$ is a manifold with a $G_2$-structure and the cross products $Cr$ and $\chi$, then we define the sections
\[
K_{\pi_7([Cr, \chi]^{FN})},\; L_{[\chi, \chi]^{FN}} \in \Gamma(\End(TM)).
\]
Therefore, by Propositions \ref{prop:Cr-chi}, \ref{prop:chi-chi} there are $G_2$-equivariant vector bundle isomorphisms
\begin{align*}
\tau_1, \tau_2: \End(TM^7) & \longrightarrow \End(TM^7)
\end{align*}
such that for the torsion endoromphism $T \in \Gamma(\End(TM))$ we have
\begin{equation} \label{eq:tor-FNG2}
{}\tau_1(T) = K_{\pi_7([Cr, \chi]^{FN})} \qquad \mbox{and} \qquad \tau_2(T) = L_{[\chi, \chi]^{FN}},
\end{equation}
where by a slight abuse of notation we denote the map $\tau_i: \Gamma(\End(TM)) \to \Gamma(\End(TM))$ applying $\tau_i$ pointwise by the same symbol.

For an element $A = a_{ij}e^i \otimes e_j \in \End(V_7)$ let us denote its skew-symmetrization by
\[
\sigma_A := a_{ij} e^{ij} \in \Lambda^2 V_7^\ast.
\]
With this notation, it follows from (\ref{eq:pi-Crchi}) and (\ref{eq:pi-chichi}) that $\tau_1$ and $\tau_2$ take the form
\begin{align*}
\tau_1(T) &= 2 \Big(T - 2 T^\top - tr(T) id \Big),\\
\tau_2(T) &= -4 (T + T^\top) + 6 \ast (e^i \wedge \sigma_T \wedge \varphi) \otimes e_i,
\end{align*}
summing over some basis $(e_i)$ with dual basis $(e^i)$.

The $G_2$-equivariance of $\tau_1$ and $\tau_2$ and (\ref{eq:tor-FNG2}) now implies that the $V_k(M)$-component of $T$ vanishes if and only if the $V_k(M)$-component of $K_{\pi_7([Cr, \chi]^{FN})}$ vanishes if and only if the $V_k(M)$-component of $L_{[\chi, \chi]^{FN}}$ vanishes. Since the cases in the Fernandez-Gray classification are determined by the vanishing of the components of $T$, we obtain the interpretation of these cases given in Table \ref{table-G21}.

\begin{table}[htbp]
\caption{} \label{table-G21}
\begin{tabular}{|c||c|c|c|} 
\hline
Classes & $\begin{array}{c}\mbox{Relation on}\\ A \in \{ T, K_{\pi_7([Cr, \chi]^{FN})}, L_{[\chi, \chi]^{FN}}\} \end{array}$ \\ \hline \hline
$V_1(M) \oplus V_7(M) \oplus V_{14}(M) \oplus V_{27}(M)$ & no relation  \\ \hline
$V_7(M) \oplus V_{14}(M) \oplus V_{27}(M)$ & $tr(A) = 0$ \\ \hline
$V_1(M) \oplus V_{14}(M) \oplus V_{27}(M)$ & $\sigma_A \in \Om^2_{14}(M)$ \\ \hline
$V_1(M) \oplus V_7(M) \oplus V_{27}(M)$    &$\sigma_A \in \Om^2_7(M)$ \\ \hline
$V_1(M) \oplus V_7(M) \oplus V_{14}(M)$    & $A + A^\top - \frac{2}{7} tr (A) {\rm id}_{TM} = 0$ \\ \hline
$V_{14}(M) \oplus V_{27}(M)$                & $\sigma_A \in \Om^2_{14}(M), \quad tr(A) = 0$\\ \hline
$V_7(M) \oplus V_{27}(M)$                   &$\sigma_A \in \Om^2_7(M), \quad tr(A) = 0$\\ \hline
$V_7(M) \oplus V_{14}(M)$                   & $A + A^\top=0$  \\ \hline
$V_1(M) \oplus V_{27}(M)$                   & $A - A^\top=0$ \\ \hline
$V_1(M) \oplus V_{14}(M)$                   & $A + A^\top - \frac{2}{7} tr (A) {\rm id}_{TM} = 0, \quad \sigma_A \in \Om^2_{14}(M)$\\ \hline
$V_1(M) \oplus V_7(M)$                      & $A + A^\top - \frac{2}{7} tr (A) {\rm id}_{TM} = 0, \quad \sigma_A \in \Om^2_7(M)$\\ \hline
$V_{27}(M)$                                  & $A - A^\top=0, \quad tr(A) = 0$\\ \hline
$V_{14}(M)$                                  & $A+A^\top = 0, \quad \sigma_A \in \Om^2_{14}(M)$ \\ \hline
$V_7(M)$                                     & $A+A^\top = 0, \quad \sigma_A \in \Om^2_7(M)$\\ \hline
$V_1(M)$                                     & $A = \frac{1}{7} tr (A) {\rm id}_{TM}$ \\ \hline
$\{ 0 \}$                                  & $A = 0$ \\ \hline
\end{tabular}
\end{table}

The interpretation of manifolds $(M^8, \Phi)$ with a $\Sp$-structure is analogous. Again, the torsion $T$ and the projection $\pi_7([P, P]^{FN})$ are sections of the $\Sp$-equivariantly isomorphic bundles $T^\ast M^8 \otimes W_7(M^8)$ and $\Lambda^6_7 T^\ast M^8 \otimes TM^8$, respectively, with an explicit identification given by
\[
H: \Lambda^6_7 T^\ast M^8 \otimes W_7(M^8) \ni \Phi \wedge (\lambda^2(a)) \otimes v \longmapsto 
v^\flat \otimes a \in T^\ast M^8 \otimes W_7(M^8),
\]
and if $(M^8, \Phi)$ is a manifold with a $\Sp$-structure and the $3$-fold product $P$, then by (\ref{eq:pi7-Phi})
\[
H_{\pi_7([P, P]^{FN})} = \tau_3(T), \qquad \mbox{where} \qquad \tau_3(T) = -\dfrac23 (4T + \phi_\sigma(T)).
\]
Here, by abuse of notation we regard $\phi_\sigma$ as the pointwise application of the map from (\ref{eq:phi-sigma}) to sections of $W_7(M^8) \otimes T^\ast M^8 \cong Lin(W_8(M^8), W_7(M^8))$.

By (\ref{eq:W8W7}), $W_7(M^8) \otimes T^\ast M^8$ can be decomposed as $W_8(M^8) \oplus W_{48}(M^8)$ whence by the $\Sp$-equivariance of $\tau_3$, the $W_k(M^8)$-component of $T$ vanishes if and only if the $W_k(M^8)$-component of $H_{\pi_7([P,P]^{FN})}$ does. Since the classification of Fern\'{a}ndez \cite{Fernandez1986} into $2^2 = 4$ different cases is given by the vanishing of the components of the torsion $T$, it follows that these cases can be also interpreted by the vanishing of the components of $H_{\pi_7([P,P]^{FN})}$, which leads to the interpretation of the classes of $\Sp$-manifolds given in Table \ref{table-Spin7}, where $pr_k: W_7(M^8) \otimes T^\ast M^8 \to W_k(M^8)$ is the canonical projection.

\begin{table}[htbp]
\caption{} \label{table-Spin7}
\begin{tabular}{|c||c|c|c|} 
\hline
Classes &$\begin{array}{c}\mbox{Relation on}\\ A \in \{ T, H_{\pi_7([P, P]^{FN})}\} \end{array}$ \\ \hline \hline
$W_8 \oplus W_{48}$ & no relation  \\ \hline
$W_{48}$                & $pr_8 (A) = 0$ \\ \hline
$W_8$                   & $pr_{48} (A) = 0$ \\ \hline
$\{ 0 \}$               & $A = 0$ \\ \hline
\end{tabular}
\end{table}


\section{Appendix}

In this appendix, we shall collect some of the formulas which we needed in the calculations in this paper. Most of them are known and can be found in a similar form e.g. in \cite[Lemma 4.37]{SW2017}, but we shall collect them here for the reader's convenience.

\begin{lemma} \label{formulas} For all $u,v,w,r \in V_7$ and any orthonormal basis $(e_i)$ of $V_7$ the following identities hold.
\begin{align}
\label{eq:form-0}
u^\flat \wedge (\imath_v \varphi) \wedge \ast \varphi =\;& 3 \la u, v\ra \vol\\
\label{eq:form-11}
u^\flat \wedge (\imath_v \ast \varphi) \wedge \varphi =\;& 4 \la u, v \ra \vol\\
\label{eq:form-1}
(\imath_u \varphi) \wedge (\imath_{v} \varphi) \wedge \varphi =\;& 6 \la u, v\ra \vol\\
\label{eq:form-2}
u^\flat \wedge (\imath_{v} \varphi) \wedge (\imath_{w} \varphi) \wedge (\imath_r \varphi) =\;& 2 \Big(\la v,w \ra \la u,r \ra\\
\nonumber
&+ \la u,v\ra \la w, r\ra + \la u,w\ra \la v, r \ra\Big) \vol\\
\label{eq:form-7}
(\imath_{u}\imath_{v} \ast \varphi) \wedge w^\flat \wedge \ast \varphi =\;& -2 u^\flat \wedge v^\flat \wedge w^\flat \wedge \ast \varphi\\
\label{eq:form-3}
(\imath_{u}\imath_{v} \varphi) \wedge w^\flat \wedge (\imath_r \varphi) \wedge \varphi =\;& 2 (\la v,w\ra \la u,r\ra - \la u,w\ra \la v,r\ra) \vol \\ \nonumber & - 2 u^\flat \wedge v^\flat \wedge w^\flat \wedge r^\flat \wedge \varphi\\
\label{eq:form-4}
(\imath_{u} \imath_{v} \ast \varphi) \wedge (\imath_{w} \ast \varphi) \wedge (\imath_r \varphi) =\;& 2(\la v,w\ra \la u,r\ra - \la u,w\ra \la v,r\ra) \vol \\ \nonumber & + u^\flat \wedge v^\flat \wedge w^\flat \wedge r^\flat \wedge \varphi\\
\label{eq:form-5}
u^\flat \wedge v^\flat \wedge (\imath_w \ast \varphi) \wedge (\imath_r \varphi) =\;&2(\la v,w\ra \la u,r\ra - \la u,w\ra \la v,r\ra) \vol \\ \nonumber & + u^\flat \wedge v^\flat \wedge w^\flat \wedge r^\flat \wedge \varphi\\
\label{eq:form-6}
u^\flat \wedge v^\flat \wedge (\imath_w \imath_r \ast \varphi) \wedge \varphi =\;&2(\la v,w\ra \la u,r\ra - \la u,w\ra \la v,r\ra) \vol \\ \nonumber & - u^\flat \wedge v^\flat \wedge w^\flat \wedge r^\flat \wedge \varphi\\
\label{eq:form13}
(\imath_u \ast \varphi) \wedge (\imath_v \varphi) =\;& 
-2 \ast ( u^\flat \wedge v^\flat )+ u^\flat \wedge v^\flat \wedge \varphi\\
\label{eq:form14}
\varphi \wedge (\imath_u \varphi) =\;& 2 u^\flat \wedge \ast \varphi\\
\label{formulas2} (\imath_{e_i} \varphi)^2 =\;& 6 \ast \varphi\\
\label{formulas-3} (\imath_u \imath_{e_i} \ast \varphi) \wedge (\imath_v \imath_{e_i} \ast \varphi) =\;& 2 (\imath_u \varphi) \wedge (\imath_v \varphi)\\
\label{formulas-4}
(\imath_u \imath_{e_i} \ast \varphi) \wedge (\imath_{e_i} \varphi) =\;& 3 u^\flat \wedge \varphi
\end{align}
\end{lemma}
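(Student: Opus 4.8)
The plan is to reduce all of the identities to a small set of classical \emph{contraction identities} for $\varphi$ and $\psi := \ast_g \varphi$ together with elementary facts about the Hodge star on $V_7$. Writing $\varphi = \frac16 \varphi_{ijk}\, e^{ijk}$ and $\psi = \frac1{24}\psi_{ijkl}\, e^{ijkl}$ in an orthonormal (equivalently, $G_2$-)frame, with indices running over $1,\dots,7$ and summation over repeated ones, one has the well-known relations
\begin{align*}
\varphi_{ipq}\varphi_{jpq} &= 6\,\delta_{ij},\\
\varphi_{ijp}\varphi_{klp} &= \delta_{ik}\delta_{jl} - \delta_{il}\delta_{jk} - \psi_{ijkl},
\end{align*}
and, as further well-known consequences (using $\psi_{ijkl} = \delta_{ik}\delta_{jl} - \delta_{il}\delta_{jk} - \varphi_{ijp}\varphi_{klp}$), the corresponding identities for $\psi_{ijkp}\psi_{lmnp}$ and for the mixed contraction $\varphi_{ijp}\psi_{klmp}$. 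All of these follow directly from $(\ref{varphi})$ and $(\ref{varphi*})$; cf.\ \cite[Lemma 4.37]{SW2017}.

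Most of the listed identities equate a $7$-form with a multiple of $\vol$. For these I would apply $\ast$, so that a top form $c\,\vol$ becomes the scalar $c$, and then repeatedly use that $\ast$ is an isometry, that $\imath_X$ is the adjoint of $X^\flat\wedge(\cdot)$, and that $\ast\ast = \mathrm{id}$ on every $\Lambda^k V_7$. Combined with the two elementary consequences
\[
u^\flat\wedge v^\flat\wedge w^\flat\wedge\psi = \varphi(u,v,w)\,\vol, \qquad u^\flat\wedge v^\flat\wedge w^\flat\wedge r^\flat\wedge\varphi = \psi(u,v,w,r)\,\vol,
\]
this rewrites each summand of each identity as a scalar built from inner products and from the components $\varphi(\cdot,\cdot,\cdot)$, $\psi(\cdot,\cdot,\cdot,\cdot)$; expanding the remaining pairings (such as $\la \imath_u\varphi, \imath_v\varphi\ra$, $\la \imath_u\imath_v\varphi, \imath_w\imath_r\varphi\ra$, or $\la \imath_u\imath_v\psi, \imath_w\varphi\ra$) into components and applying the contraction identities above collapses both sides to the same polynomial. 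For instance, $(\ref{eq:form-0})$ amounts to $\la \imath_u\varphi, \imath_v\varphi\ra = \frac12 u_i v_j \varphi_{ipq}\varphi_{jpq} = 3\la u,v\ra$.

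The genuinely form-valued identities $(\ref{eq:form13})$, $(\ref{eq:form14})$, $(\ref{formulas2})$, $(\ref{formulas-3})$, $(\ref{formulas-4})$ I would instead handle by a Schur-type argument. Each asserts the equality of two $G_2$-equivariant (multi)linear maps from a small $G_2$-module ($V_1$, $V_7$, or $V_7\otimes V_7$) into some $\Lambda^k V_7$; decomposing source and target into irreducibles via $(\ref{eq:DiffForm-V7})$ and $(\ref{decom-L-V7})$ shows that the space of such maps has dimension equal to the number of shared irreducible summands, and that the terms appearing on the right-hand side already span it. The remaining scalar constants are then fixed either by pairing with a convenient test form --- which reduces the claim to one of the scalar identities already established (e.g.\ wedging $(\ref{formulas2})$ with $\varphi$ reduces it, via $(\ref{eq:form-1})$ and $\psi\wedge\varphi = 7\,\vol$, to $42\,\vol = 42\,\vol$) --- or simply by evaluating both sides on one or two frame vectors $e_1,\dots,e_7$ via $(\ref{varphi})$ and $(\ref{varphi*})$.

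The only real difficulty is bookkeeping. One must keep the conventions for $\imath_X$, for the Hodge star and the orientation, and for the musical isomorphisms $\flat,\#$ mutually consistent throughout, and carefully track the antisymmetrizations produced when several contractions and wedges are composed, as in $(\ref{eq:form-3})$--$(\ref{eq:form-6})$, where a term $u^\flat\wedge v^\flat\wedge w^\flat\wedge r^\flat\wedge\varphi$ must be recognized as $\psi(u,v,w,r)\,\vol$. There is no conceptual obstacle: once the contraction identities and the two top-form reductions above are in hand, every identity in the lemma is a finite and purely mechanical verification.
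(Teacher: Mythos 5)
Your proposal is correct, but it treats the scalar (top-degree) identities (\ref{eq:form-0})--(\ref{eq:form-6}) by a different route than the paper. The paper handles \emph{every} identity, scalar and form-valued alike, by one uniform Schur-type argument: the left-hand side is a $G_2$-invariant element of some tensor power of $V_7$, the decompositions (\ref{eq:DiffForm-V7}), (\ref{decom-L-V7}) and those of $\odot^k V_7$ show that the space of invariants is low-dimensional and already spanned by the right-hand side terms, and the undetermined coefficients are then fixed by evaluating on one or two frame vectors via (\ref{varphi}) and (\ref{varphi*}); the paper works out (\ref{eq:form-2}) explicitly this way, noting $\dim(V_7\otimes\odot^3 V_7)^{G_2}=1$ and then setting $u=v=w=r=e_1$. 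You use exactly this invariant-theory argument for the form-valued identities (\ref{eq:form13})--(\ref{formulas-4}), matching the paper there, but for the scalar identities you instead expand in index notation and reduce to the classical contraction identities $\varphi_{ipq}\varphi_{jpq}=6\delta_{ij}$, $\varphi_{ijp}\varphi_{klp}=\delta_{ik}\delta_{jl}-\delta_{il}\delta_{jk}-\psi_{ijkl}$ and their $\psi\psi$, $\varphi\psi$ analogues. Both methods are sound and ultimately rest on the normal forms (\ref{varphi}), (\ref{varphi*}); the component computation is more elementary and does not presuppose the irreducible decompositions, but it is heavier on sign and antisymmetrization bookkeeping, especially in (\ref{eq:form-3})--(\ref{eq:form-6}), whereas the paper's invariant-theory shortcut is briefer per identity once the relevant multiplicities are known.
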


\begin{proof}
For the proof of these identities, observe that the left hand side of each equation is a $G_2$-invariant element of some tensor power of $V_7$, and therefore it has to be a linear combination of the summands on the right hand side; the coefficients of this linear combination then can be determined by using the explicit formulas for $\varphi$ and $\ast \varphi$ in (\ref{varphi}) and (\ref{varphi*}).

To pick one explicit example which is not among the identities shown in \cite{SW2017}, observe that the left hand side of (\ref{eq:form-2}) is an element of $(V_7 \otimes \odot^3 V_7)^{G_2}$. Since $\odot^3 V_7 \cong V_7 \oplus V_{77}$, we have $\dim (V_7 \otimes \odot^3 V_7)^{G_2} = 1$, and there is one $G_2$-invariant element of $V_7 \otimes \odot^3 V_7$ given by deriving the square of the scalar product which lies in $\odot^4 V_7$. Thus,
\begin{align}
u^\flat \wedge (\imath_{v} \varphi) \wedge (\imath_{w} \varphi) \wedge (\imath_r \varphi) =\;& c \Big(\la v,w \ra \la u,r \ra\\
\nonumber &+ \la u,v\ra \la w, r\ra + \la u,w\ra \la v, r \ra\Big) \vol.
\end{align}
for some constant $c \in \R$. Now setting $u = v = w = r =: e_1$ and using (\ref{varphi}) implies that $c = 2$, showing (\ref{eq:form-2}).

The remaining identities are shown in a similar fashion.
\end{proof}

The following two decompositions of $G_2$- and $\Sp$-representations is also well known, cf. \cite[Theorem 8.5, 9.8]{SW2017}, \cite[(4.7), (4.8)]{Kar2005}.

\begin{lemma} \label{Lambda2-V7}
Decompose $\Lambda^2 V_7^\ast = \Lambda^2_7 V_7^\ast \oplus \Lambda^2_{14} V_7^\ast$ according to (\ref{decom-L-V7}). Then
\begin{align*}
\Lambda^2_7 V_7^\ast &= \{ \alpha^2 \in \Lambda^2 V_7^\ast \mid \ast(\alpha^2 \wedge \varphi) = 2 \alpha^2\}, \quad \mbox{and}\\
\Lambda^2_{14} V_7^\ast &= \{ \alpha^2 \in \Lambda^2 V_7^\ast \mid \ast(\alpha^2 \wedge \varphi) = - \alpha^2\}.
\end{align*}
In particular,
\begin{equation} \label{eq:describe-Lamb2V7}
\begin{array}{lll}
\Lambda^2_7 V_7^\ast &= &\{ \alpha^2 + \ast(\alpha^2 \wedge \varphi) \mid \alpha^2 \in \Lambda^2 V_7^\ast\}, \quad \mbox{and}\\[2mm]
\Lambda^2_{14} V_7^\ast &= &\{ 2 \alpha^2 - \ast(\alpha^2 \wedge \varphi) \mid \alpha^2 \in \Lambda^2 V_7^\ast\}.
\end{array}
\end{equation}
\end{lemma}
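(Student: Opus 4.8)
The plan is to identify the two subspaces $\Lambda^2_7 V_7^\ast$ and $\Lambda^2_{14} V_7^\ast$ as eigenspaces of the $G_2$-equivariant operator $\alpha^2 \mapsto \ast(\alpha^2 \wedge \varphi)$ on $\Lambda^2 V_7^\ast$. First I would note that this operator is well-defined since $\alpha^2 \wedge \varphi \in \Lambda^5 V_7^\ast$ and $\ast$ maps $\Lambda^5$ back to $\Lambda^2$, and it is manifestly $G_2$-equivariant because both $\varphi$ and the Hodge star are $G_2$-invariant. By Schur's lemma, since $\Lambda^2 V_7^\ast \cong V_7 \oplus V_{14}$ is multiplicity-free, this operator acts as a scalar $c_7$ on $\Lambda^2_7 V_7^\ast$ and as a scalar $c_{14}$ on $\Lambda^2_{14} V_7^\ast$.

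Next I would compute the two eigenvalues explicitly. On $\Lambda^2_7 V_7^\ast = \{\imath_v \varphi \mid v \in V_7\}$, pick $v = e_1$ so that $\imath_{e_1}\varphi = e^{23} + e^{45} + e^{67}$ using (\ref{varphi}); then one computes $(\imath_{e_1}\varphi) \wedge \varphi$ directly from (\ref{varphi}) and applies $\ast$ using (\ref{varphi*}), which should give $2\,\imath_{e_1}\varphi$, so $c_7 = 2$. For $\Lambda^2_{14} V_7^\ast$ I would use the characterization from (\ref{decom-L-V7}) that $\alpha^2 \wedge \ast\varphi = 0$; a convenient explicit element is, say, $e^{23} - e^{45}$ (one checks it wedges to zero with $\ast\varphi$), and computing $(e^{23} - e^{45}) \wedge \varphi$ and applying $\ast$ should yield $-(e^{23} - e^{45})$, so $c_{14} = -1$. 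These two computations give the first displayed assertion of the lemma.

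Finally, the explicit formulas (\ref{eq:describe-Lamb2V7}) follow formally: since the operator $L(\alpha^2) := \ast(\alpha^2 \wedge \varphi)$ has eigenvalues $2$ and $-1$ on the two summands, the projections onto them are $\tfrac13(L + \mathrm{id})$ onto $\Lambda^2_7 V_7^\ast$ and $\tfrac13(2\,\mathrm{id} - L)$ onto $\Lambda^2_{14} V_7^\ast$; discarding the harmless overall factor $\tfrac13$ (which does not change the image, since we range over all $\alpha^2$) gives exactly the stated descriptions $\{\alpha^2 + \ast(\alpha^2 \wedge \varphi)\}$ and $\{2\alpha^2 - \ast(\alpha^2 \wedge \varphi)\}$.

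The only genuine computational step is verifying the two eigenvalue assertions $c_7 = 2$ and $c_{14} = -1$ by plugging a single explicit $2$-form into (\ref{varphi}) and (\ref{varphi*}); this is the same strategy used in the proof of Lemma \ref{formulas}, and there is no real obstacle beyond bookkeeping. One should just be careful to choose representatives genuinely lying in each summand (using the $\ast\varphi$-orthogonality characterization of $\Lambda^2_{14}$) so that Schur's lemma legitimately forces the scalar.
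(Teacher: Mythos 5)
Your proof is correct. Note, however, that the paper does not actually prove this lemma: it states it as ``well known'' and cites \cite[Theorem~8.5, 9.8]{SW2017} and \cite[(4.7), (4.8)]{Kar2005}, so there is no in-paper proof to compare against. Your argument --- identify $\alpha^2 \mapsto \ast(\alpha^2 \wedge \varphi)$ as a $G_2$-equivariant self-map of the multiplicity-free module $\Lambda^2 V_7^\ast \cong V_7 \oplus V_{14}$, invoke Schur's lemma (which applies over $\R$ here because, as the paper records, all $G_2$-representations are of real type), and pin down the two scalars by evaluating on $\imath_{e_1}\varphi = e^{23}+e^{45}+e^{67}$ and on $e^{23}-e^{45} \in \Lambda^2_{14}$ --- is exactly the same technique the paper uses to prove Lemma~\ref{formulas}, and the explicit computations check out ($c_7=2$, $c_{14}=-1$). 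The passage from the eigenvalue statement to (\ref{eq:describe-Lamb2V7}) via the spectral projections $\tfrac13(L+\mathrm{id})$ and $\tfrac13(2\,\mathrm{id}-L)$, with the overall factor $\tfrac13$ absorbed into the quantifier over $\alpha^2$, is also correct. So your proposal supplies a valid self-contained proof for a statement the paper only references.
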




\begin{lemma} \label{Kar47-48}
Decompose $\Lambda^2 W_8^\ast = \Lambda^2_7 W_8^\ast \oplus \Lambda^2_{21} W_8^\ast$ according to (\ref{decom1-L-W8}). Then
\begin{align*}
\Lambda^2_7 W_8^\ast &= \{ \alpha^2 \in \Lambda^2 W_8^\ast \mid \Phi \wedge \alpha^2 = 3 \ast \alpha^2\}, \quad \mbox{and}\\
\Lambda^2_{21} W_8^\ast &= \{ \alpha^2 \in \Lambda^2 W_8^\ast \mid \Phi \wedge \alpha^2 = - \ast \alpha^2\}.
\end{align*}
\end{lemma}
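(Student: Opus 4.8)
The plan is to reduce the statement to determining the eigenvalues of the single $\Sp$-equivariant endomorphism
\[
L\colon \Lambda^2 W_8^\ast \longrightarrow \Lambda^2 W_8^\ast, \qquad L(\alpha^2) := \ast(\Phi \wedge \alpha^2).
\]
Since $\ast\ast = \mathrm{id}$ on $2$-forms in dimension $8$ (as $2(8-2)$ is even), the condition $\Phi \wedge \alpha^2 = \mu \ast \alpha^2$ is equivalent to $L(\alpha^2) = \mu\, \alpha^2$. Thus the Lemma is exactly the assertion that $L$ acts as $3\,\mathrm{id}$ on $\Lambda^2_7 W_8^\ast$ and as $-\mathrm{id}$ on $\Lambda^2_{21} W_8^\ast$, and that these are precisely the corresponding eigenspaces.

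First I would record two structural properties of $L$. It is $\Sp$-equivariant, because $\Phi$, the Riemannian metric and the orientation are all $\Sp$-invariant. And it is self-adjoint for the induced inner product on $\Lambda^2 W_8^\ast$: for $2$-forms $\alpha,\beta$ one has $\la L\alpha, \beta\ra\, \vol = \Phi \wedge \alpha \wedge \beta = \Phi \wedge \beta \wedge \alpha = \la L\beta, \alpha\ra\, \vol$ (again using $\ast\ast=\mathrm{id}$), where the middle equality holds since $\alpha$ and $\beta$ are of degree $2$. Hence $L$ is diagonalizable with real eigenvalues, and its eigenspaces are $\Sp$-submodules of $\Lambda^2 W_8^\ast \cong \Lambda^2_7 W_8^\ast \oplus \Lambda^2_{21} W_8^\ast$; since $\Lambda^2_7 W_8^\ast \cong W_7$ and $\Lambda^2_{21} W_8^\ast \cong W_{21}$ are irreducible and non-isomorphic by (\ref{decom1-L-W8}), each of the two summands must be contained in a single eigenspace. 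Thus $L = \lambda_7\,\mathrm{id}$ on $\Lambda^2_7 W_8^\ast$ and $L = \lambda_{21}\,\mathrm{id}$ on $\Lambda^2_{21} W_8^\ast$ for some $\lambda_7,\lambda_{21}\in\R$, and it remains to compute these two numbers and to check that they differ.

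The first relation comes from the trace. In an orthonormal basis $(e^\mu)$ of $W_8^\ast$ the forms $(e^{\mu\nu})_{\mu<\nu}$ are an orthonormal basis of $\Lambda^2 W_8^\ast$, and the diagonal entries of $L$ in this basis vanish, since $\la L(e^{\mu\nu}), e^{\mu\nu}\ra\, \vol = \Phi \wedge e^{\mu\nu} \wedge e^{\mu\nu} = 0$. Hence $0 = \operatorname{tr}(L) = 7\lambda_7 + 21\lambda_{21}$, i.e.\ $\lambda_7 = -3\lambda_{21}$. For the second relation I would evaluate $L$ on one nonzero element of $\Lambda^2_7 W_8^\ast$: by (\ref{decom-L-W8}) we have $\Lambda^2_7 W_8^\ast = \{\lambda^2(v)\mid v\in V_7\}$, and Lemma \ref{lem:lambdas} gives $\Phi \wedge \lambda^2(v) = 3 \ast \lambda^2(v)$, so $L(\lambda^2(v)) = \ast\bigl(3\ast\lambda^2(v)\bigr) = 3\,\lambda^2(v)$ and therefore $\lambda_7 = 3$. (If one prefers a self-contained computation, the relation $\Phi \wedge \lambda^2(e_1) = 3\ast\lambda^2(e_1)$ can be checked by hand from $\lambda^2(e_1) = e^{01}+e^{23}+e^{45}+e^{67}$ and the explicit form (\ref{Phi4}) of $\Phi$.) Combining, $\lambda_{21} = -1$; as $3 \ne -1$, the $3$-eigenspace of $L$ equals $\Lambda^2_7 W_8^\ast$ and the $(-1)$-eigenspace equals $\Lambda^2_{21} W_8^\ast$. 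Translating $L(\alpha^2) = \mu\,\alpha^2$ back into $\Phi \wedge \alpha^2 = \mu\ast\alpha^2$ yields the two descriptions in the statement.

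The argument is elementary throughout; the two points that require a little care are the self-adjointness bookkeeping (which is what guarantees that the eigenvalues are real and that the eigenspaces are genuine $\Sp$-submodules) and the normalization $\Phi \wedge \lambda^2(v) = 3\ast\lambda^2(v)$, which is where the precise value $3$, rather than merely an unspecified nonzero scalar, enters the computation of $\lambda_7$. Everything else is forced by Schur's lemma together with the vanishing of $\operatorname{tr}(L)$.
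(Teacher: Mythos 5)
Your argument is correct. The paper itself gives no proof of Lemma~\ref{Kar47-48}: it is stated as well known and referred to \cite[Theorem~9.8]{SW2017} and \cite{Kar2005}, so there is no ``paper proof'' to compare against, and your proposal supplies a genuine, self-contained derivation. The structure is sound: $L(\alpha):=\ast(\Phi\wedge\alpha)$ is $\Sp$-equivariant and self-adjoint (your bookkeeping with $\ast\ast=\mathrm{id}$ on $\Lambda^2$ and $\Lambda^6$ in dimension $8$ is correct), hence acts by a real scalar on each of the irreducible, mutually non-isomorphic summands $\Lambda^2_7 W_8^\ast\cong W_7$ and $\Lambda^2_{21} W_8^\ast\cong W_{21}$; the vanishing of $\operatorname{tr}(L)$ gives $7\lambda_7+21\lambda_{21}=0$; and pinning down $\lambda_7=3$ from $\Phi\wedge\lambda^2(v)=3\ast\lambda^2(v)$ forces $\lambda_{21}=-1$, with $3\neq -1$ so the eigenspaces are exactly the two summands. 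Two small remarks. First, you are right to flag the normalization step: in the paper, the identity $\Phi\wedge\lambda^2(v)=3\ast\lambda^2(v)$ in Lemma~\ref{lem:lambdas} is itself deduced from \cite[Theorem~9.8]{SW2017}, which is essentially the statement being proved, so for a logically independent proof you should use the explicit check you mention (and it does work: with $\lambda^2(e_1)=e^{01}+e^{23}+e^{45}+e^{67}$ and $\Phi$ as in (\ref{Phi4}) one finds $\Phi\wedge\lambda^2(e_1)=3(e^{012345}+e^{012367}+e^{014567}+e^{234567})=3\ast\lambda^2(e_1)$). Second, one can avoid invoking that representations are of real type: self-adjointness alone guarantees a real eigenvalue on each irreducible summand, and then Schur's lemma applied to $L-\lambda\,\mathrm{id}$ forces $L$ to be that scalar there, which is exactly what you use. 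The trace-plus-normalization method is the same device the paper employs for the analogous Lemma~\ref{lem:phisigma}, so your approach fits the paper's toolbox well.
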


%
%

We shall also need the following result.

\begin{lemma} For all $u, v \in W_7$ and $a, b \in W_8$ the following formulas hold:
\begin{align}
\nonumber
&(\imath_a \Phi) \wedge (\imath_{b} \lambda^4(u)) \wedge \lambda^2(v) =\\ 
\label{spinformula1}&\qquad -4 \la a, b \ra_{W_8} \la u,v \ra_{W_7} \vol + a^\flat \wedge b^\flat \wedge \lambda^4(u) \wedge \lambda^2(v),\\
\nonumber
&a^\flat \wedge (\imath_b \imath_{e_\mu} \Phi) \wedge (\imath_{e_\mu} \lambda^4(u)) \wedge \lambda^2(v) =\\
\label{spinformula2} &\qquad -12 \la a, b \ra_{W_8} \la u,v \ra_{W_7} \vol + a^\flat \wedge b^\flat \wedge \lambda^4(u) \wedge \lambda^2(v)
\end{align}
where in (\ref{spinformula2}) the sum is taken over an orthonormal basis $(e_\mu)$ of $W_8$.
\end{lemma}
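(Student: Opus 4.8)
The plan is to prove both identities (\ref{spinformula1}) and (\ref{spinformula2}) by the same representation-theoretic principle that underlies Lemma \ref{formulas}: the left-hand side of each formula, viewed as a multilinear map in $a,b \in W_8$ and $u,v \in W_7$, is $\Sp$-equivariant with values in $\Lambda^8 W_8^\ast \cong \R \cdot \vol$, and the space of such invariants is low-dimensional, so each left-hand side must be a linear combination of a small number of standard invariant expressions; the coefficients are then pinned down by evaluating on an explicit $\Sp$-frame. First I would identify the relevant invariant expressions. The product $a^\flat \wedge b^\flat \wedge \lambda^4(u) \wedge \lambda^2(v)$ is already an $8$-form (it appears as $2\sigma(a,u,b,v)\,\vol$ by (\ref{eq:sigma})), and $\la a,b\ra_{W_8}\la u,v\ra_{W_7}\,\vol$ is the obvious ``factorized'' invariant. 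The claim is that the left-hand sides are linear combinations of exactly these two; this follows because $W_8^\ast\otimes W_8^\ast\otimes W_7^\ast\otimes W_7^\ast$ contains the trivial $\Sp$-module with multiplicity two — one copy from $(\odot^2 W_8^\ast)\otimes(\odot^2 W_7^\ast)\supset W_1\otimes W_1$, i.e.\ the metric-metric contraction, and one copy from the ``mixed'' invariant built out of $\Phi$ and the $\lambda$-maps — so a two-term ansatz is forced. (Alternatively one may note that in each formula only the component of the left-hand side that is symmetric/antisymmetric in the right way can survive, further constraining things.)

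Next I would derive the two identities. For (\ref{spinformula1}): expand $(\imath_a\Phi)\wedge(\imath_b\lambda^4(u))\wedge\lambda^2(v)$ by moving the contractions through the wedge, using the graded Leibniz rule $\imath_a(\alpha\wedge\beta) = (\imath_a\alpha)\wedge\beta + (-1)^{|\alpha|}\alpha\wedge(\imath_a\beta)$ applied to $\imath_a(\Phi\wedge\lambda^4(u)\wedge\lambda^2(v))$; since $\Phi\wedge\lambda^4(u)$ and $\Phi\wedge\lambda^2(v)$ have controlled type (the first lies in $\Lambda^8$ only via the trace-free part vanishing, and $\Phi\wedge\lambda^2(v)=3\ast\lambda^2(v)$ by Lemma \ref{lem:lambdas}), the ``boundary'' terms collapse to scalar multiples of $\la u,v\ra\,\vol$ and of $a^\flat\wedge b^\flat\wedge\lambda^4(u)\wedge\lambda^2(v)$. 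Write $\text{LHS} = \alpha\,\la a,b\ra\la u,v\ra\,\vol + \beta\,a^\flat\wedge b^\flat\wedge\lambda^4(u)\wedge\lambda^2(v)$ and solve for $\alpha,\beta$ by two test substitutions in the standard $\Sp$-frame of (\ref{Phi4}): e.g.\ take $a=b=e_0$, $u=v=e_1$ to isolate $\alpha$ (here $a^\flat\wedge b^\flat = 0$ kills the $\beta$-term), and then $a=e_0,b=e_1$ with suitable $u,v$ (or $a\neq b$ both in $V_7$) to read off $\beta$; the expected answer is $\alpha=-4$, $\beta=1$. For (\ref{spinformula2}) the same scheme applies: write $a^\flat\wedge(\imath_b\imath_{e_\mu}\Phi)\wedge(\imath_{e_\mu}\lambda^4(u))\wedge\lambda^2(v)$, sum over the frame $(e_\mu)$, and use the same two-term ansatz; the only difference is bookkeeping in the double contraction $\imath_b\imath_{e_\mu}$ summed against $\imath_{e_\mu}$, which by the usual trace identities (of the type $(\imath_{e_\mu}\Psi_1)\wedge(\imath_{e_\mu}\Psi_2)$) again produces only the two allowed invariants, now with coefficients $-12$ and $1$.

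The main obstacle I anticipate is purely computational: carefully tracking signs and the combinatorial factors in the sum over $e_\mu$ in (\ref{spinformula2}), and making sure that the pieces of $\lambda^4(u)$ and $\lambda^2(v)$ involving $e^0$ versus $V_7$ are correctly paired when one reduces to the $G_2$-formulas of Lemma \ref{formulas} (using $\Phi = e^0\wedge\varphi + \ast_7\varphi$). It helps that one may first establish (\ref{spinformula1}) by direct expansion using only Lemma \ref{lem:lambdas}, (\ref{eq:form13}), (\ref{eq:form14}) and the structure of $\Phi$, and then obtain (\ref{spinformula2}) from (\ref{spinformula1}) by contracting one $W_8$-slot against the trace, i.e.\ by applying $\imath_{e_\mu}(-)\wedge$-pairing and using $\sum_\mu \imath_{e_\mu}(\imath_{e_\mu}\Phi) = \text{(const)}\,\Phi$ together with $\sum_\mu e^\mu\wedge\imath_{e_\mu} = (\deg)\cdot\text{id}$; this reduces the second identity to the first plus one scalar computation fixing the coefficient $-12$ against $-4$. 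Either way, the proof is a routine ``invariant theory plus one frame evaluation'' argument, exactly in the style already used for Lemma \ref{formulas}.
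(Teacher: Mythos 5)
Your strategy is the same as the paper's: observe that each left-hand side is a $\Sp$-equivariant tensor in $W_8^*\otimes W_8^*\otimes W_7^*\otimes W_7^*$ landing in $\Lambda^8 W_8^*\cong\R$, conclude the space of such invariants is $2$-dimensional, write a two-term ansatz in the metric contraction and $a^\flat\wedge b^\flat\wedge\lambda^4(u)\wedge\lambda^2(v)$, then fix the four coefficients by frame evaluations (the paper uses $a=b=e_0$ and then $a=e_0$, $b,u,v\in W_7$, which matches your suggested test cases). That part is fine.

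One small imprecision worth tightening: you assert multiplicity two by naming two invariants (the metric–metric contraction from $\odot^2W_8^*\otimes\odot^2W_7^*$ and a ``mixed'' one), but naming two invariants only proves the lower bound. The upper bound --- that there are no others --- needs the explicit decompositions $W_8\otimes W_8\cong W_1\oplus W_{35}\oplus W_7\oplus W_{21}$ and $W_7\otimes W_7\cong W_1\oplus W_{27}\oplus W_{21}$, from which exactly two irreducibles ($W_1$ and $W_{21}$) appear in both; the paper carries out precisely this count using \eqref{decom1-L-W8} and \eqref{decom-Sk-W7}. Your ``derive \eqref{spinformula2} from \eqref{spinformula1} by contraction'' alternative is also not obviously workable as stated --- the two left-hand sides contract different slots of $\Phi$ and $\lambda^4(u)$ and are not related by a single trace --- so you would in any case fall back on the direct frame computation you also describe, which is what the paper does.
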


\begin{proof} By (\ref{decom1-L-W8}) and (\ref{decom-Sk-W7}), the decomposition of $W_8 \otimes W_8$ and $W_7 \otimes W_7$ into $\Sp$-irreducible summands yields
\begin{align*}
W_8 \otimes W_8 &= \odot^2 W_8 \oplus \Lambda^2 W_8 \cong (W_1 \oplus W_{35}) \oplus (W_7 \oplus W_{21}),\\
W_7 \otimes W_7 &= \odot^2 W_7 \oplus \Lambda^2 W_7 \cong (W_1 \oplus W_{27}) \oplus W_{21},
\end{align*}
so that there are two inequivalent summands in common and hence, the space of $\Sp$-invariant tensors in $W_8 \otimes W_8 \otimes W_7 \otimes W_7$ is $2$-dimensional. Since the left hand sides of (\ref{spinformula1}) and (\ref{spinformula2}) describe such tensors, it follows that there must be constants $c_1, \ldots, c_4 \in \R$ such that
\begin{align}
\nonumber
&(\imath_a \Phi) \wedge (\imath_{b} \lambda^4(u)) \wedge \lambda^2(v) =\\ 
\label{spinformula1c}&\qquad c_1 \la a, b \ra_{W_8} \la u,v \ra_{W_7} \vol + c_2 a^\flat \wedge b^\flat \wedge \lambda^4(u) \wedge \lambda^2(v),\\
\nonumber
&a^\flat \wedge (\imath_b \imath_{e_\mu} \Phi) \wedge (\imath_{e_\mu} \lambda^4(u)) \wedge \lambda^2(v) =\\
\label{spinformula2c} &\qquad c_3 \la a, b \ra_{W_8} \la u,v \ra_{W_7} \vol + c_4 a^\flat \wedge b^\flat \wedge \lambda^4(u) \wedge \lambda^2(v).
\end{align}

In order to determine these constants, we first let $a = b := e_0$, so that
\begin{align*}
(\imath_{e_0} \Phi) \wedge (\imath_{e_0} \lambda^4(u)) \wedge \lambda^2(v) =& \varphi \wedge (\imath_u \ast_7 \varphi) \wedge (e^0 \wedge v^\flat + (\imath_v \varphi))\\
=& -e^0 \wedge v^\flat \wedge (\imath_u \ast_7 \varphi) \wedge \varphi\\
\stackrel{(\ref{eq:form-11})}=& -4 \la u,v \ra \vol,
\end{align*}
and from this, $c_1 = -4$ follows. Whence if we let $a := e_0$ and $b, u, v \in W_7 = e_0^\perp$ then
\begin{align*}
(\imath_{e_0} \Phi) \wedge (\imath_b \lambda^4(u)) \wedge \lambda^2(v) \stackrel{(\ref{def:lambda})}=& \varphi \wedge (-e^0 \wedge (\imath_b \imath_u \ast_7 \varphi) - \imath_b (u^\flat \wedge \varphi))\\ & \qquad \wedge (e^0 \wedge v^\flat + (\imath_v \varphi))\\
=& - \varphi \wedge e^0 \wedge (\imath_b \imath_u \ast_7 \varphi) \wedge (\imath_v \varphi)\\
& \qquad + \varphi \wedge u^\flat \wedge (\imath_b \varphi) \wedge e^0 \wedge v^\flat\\
\stackrel{(\ref{eq:form14})}=& 2 e^0 \wedge (\imath_b \imath_u \ast_7 \varphi) \wedge v^\flat \wedge \ast_7 \varphi\\
& \qquad + 2 b^\flat \wedge \ast_7 \varphi \wedge u^\flat \wedge e^0 \wedge v^\flat\\
\stackrel{(\ref{eq:form-7})}=& -2 e^0 \wedge b^\flat \wedge u^\flat \wedge v^\flat \wedge \ast_7 \varphi\\
\stackrel{(\ref{eq:l4l2})}=& e^0 \wedge b^\flat \wedge \lambda^4(u) \wedge \lambda^2(v),
\end{align*}
so that $c_2 = 1$ follows. Now substituting $a = b := e_0$ and $u = v := e_1$ into (\ref{spinformula2c}) and using the index $i$ to run from $1$ to $7$ yields
\begin{align*}
e^0 \wedge (-\imath_{e_i} \varphi) \wedge (\imath_{e_i} (-e^1 \wedge \varphi)) \wedge (\imath_{e_1} \varphi) =& e^0 \wedge (\imath_{e_i} \varphi) \wedge (\delta_{1i} \varphi - e^1 \wedge (\imath_{e_i} \varphi)) \wedge (\imath_{e_1} \varphi)\\
=& e^0 \wedge (\imath_{e_1} \varphi) \wedge (\imath_{e_1} \varphi) \wedge \varphi\\
&\qquad - e^0 \wedge e^1 \wedge (\imath_{e_i} \varphi) \wedge (\imath_{e_i} \varphi) \wedge \imath_{e_1} \varphi\\
\stackrel{(\ref{eq:form-1}),(\ref{formulas2})}=& 6 \vol - 6 e^0 \wedge e^1 \wedge \imath_{e_1} \varphi\wedge \ast \varphi\\
&\stackrel{(\ref{eq:form-0})}= -12 \vol,
\end{align*}
so that $c_3 = -12$ follows. Finally, for $a := e_0$, $u := e_1$ and $b, v \in W_7 = e_0^\perp$ (\ref{spinformula2c}) reads
\begin{align*}
e^0 \wedge (\imath_b \imath_{e_\mu} \Phi) \wedge &(\imath_{e_\mu} \lambda^4(e_1)) \wedge \lambda^2(v)\\
 =&\; e^0 \wedge (\imath_b \varphi) \wedge (\imath_{e_1} \ast_7 \varphi) \wedge (\imath_v \varphi)\\
& + e^0 \wedge (\imath_b \imath_{e_i} \ast_7 \varphi) \wedge (-\imath_{e_i} (e^1 \wedge \varphi)) \wedge (\imath_v \varphi)\\
\stackrel{(*)}=&\; 0 + e^0 \wedge (\imath_b \imath_{e_i} \ast_7 \varphi) \wedge (-\delta_{1i} \varphi + e^1 \wedge (\imath_{e_i} \varphi)) \wedge (\imath_v \varphi)\\
=&\; -e^0 \wedge (\imath_b \imath_{e_1} \ast_7 \varphi) \wedge \varphi \wedge (\imath_v \varphi)\\
& + e^{01} \wedge (\imath_b \imath_{e_i} \ast_7 \varphi) \wedge (\imath_{e_i} \varphi) \wedge (\imath_v \varphi)\\
\stackrel{(\ref{eq:form14}), (\ref{formulas-4})}=&\; -2 e^0 \wedge (\imath_b \imath_{e_1} \ast_7 \varphi) \wedge v^\flat \wedge \ast_7 \varphi + 3 e^{01} \wedge b^\flat \wedge \varphi \wedge (\imath_v \varphi)\\
\stackrel{(\ref{eq:form14}), (\ref{eq:form-7})}=&\; 4 e^0 \wedge b^\flat \wedge e^1 \wedge v^\flat \wedge \ast_7 \varphi + 6 e^{01} \wedge b^\flat \wedge v^\flat \wedge \ast_7 \varphi\\
=&\; -2 e^0 \wedge b^\flat \wedge e^1 \wedge v^\flat \wedge \ast_7 \varphi\\
\stackrel{(\ref{eq:l4l2})}=& e^0 \wedge b^\flat \wedge \lambda^4(e_1) \wedge \lambda^2(v),
\end{align*}
so that $c_4 = 1$ follows. At $(*)$ we have used that the map
\[
(u,v,w) \longmapsto \ast \Big((\imath_u \varphi) \wedge (\imath_v \ast_7 \varphi) \wedge (\imath_w \varphi)\Big)
\]
is a $G_2$-invariant element of $W_7 \otimes \odot^2 W_7$, and since by (\ref{decom-Sk-W7}) $\odot^2 W_7 \cong W_1 \oplus W_{27}$ has no irreducible component isomorphic to $W_7$, there is no such element other than $0$.
\end{proof}

\subsection*{Acknowledgement} A part of this project has been discussed during HVL's visit to the Osaka City University in December 2015. She thanks Professor Ohnita for his invitation to Osaka and
his hospitality. HVL and LS also thank the Max Planck Institute for Mathematics in the Sciences in Leipzig for its hospitality during extended visits. We also thank the referee for many helpful comments which enabled us to significantly improve the manuscript.


\end{document}